\newtheorem{Thm}{Theorem}[section]
\newtheorem{Lem}[Thm]{Lemma}
\newtheorem{Def}[Thm]{Definition}
\newtheorem*{Def*}{Definition}
\newtheorem{Prop}[Thm]{Proposition}
\newtheorem{Conj}[Thm]{Conjecture}
\newtheorem{Cor}[Thm]{Corollary}
\newtheorem{Rmk}[Thm]{Remark}
\newtheorem{eg}[Thm]{Example}
\newtheorem{Question}[Thm]{Question}
\newtheorem*{Claim*}{Claim}
\def\R{\mathbb{R}}
\def\C{\mathbb{C}}
\def\Z{\mathbb{Z}}
\def\P{\mathbb{P}}
\def\H{\mathbb{H}}
\def\T{\mathbb{T}}
\def\D{\mathcal{D}}
\def\O{\mathcal{O}}
\def\NN{\mathcal{N}}
\def\ZZ{\mathcal{Z}}
\def\PP{\mathcal{P}}
\def\cM{\mathcal{M}}
\def\cC{\mathcal{C}}
\def\Stab{\mathrm{Stab}}
\def\Aut{\mathrm{Aut}}
\def\ch{\mathrm{ch}}
\def\td{\mathrm{td}}
\def\Hom{\mathrm{Hom}}
\def\GL{\mathrm{GL}}
\def\NS{\mathrm{NS}}
\def\sys{\mathrm{sys}}
\def\Coh{\mathrm{Coh}}
\def\Fuk{\mathrm{Fuk}}
\def\vol{\mathrm{vol}}
\def\sph{\mathrm{sph}}
\def\disc{\mathrm{disc}}
\def\Quad{\mathrm{Quad}}
\def\ra{\rightarrow}
\def\bs{\backslash}
\title{Systolic inequalities for K3 surfaces via stability conditions}
\author{Yu-Wei Fan}
\date{}
\begin{document}

\maketitle

%%%%% ABSTRACT %%%%%
\begin{abstract}

We introduce the notions of categorical systoles and categorical volumes of Bridgeland stability conditions on triangulated categories. We prove that for any projective K3 surface $X$, there exists a constant $C$ depending only on the rank and discriminant of $NS(X)$, such that
\[
\sys(\sigma)^2\leq C\cdot\vol(\sigma)
\]
holds for any stability condition on $\D^b\Coh(X)$. This is an algebro-geometric generalization of a classical systolic inequality on two-tori. We also discuss applications of this inequality in symplectic geometry. \\

\noindent \emph{Key words}. Derived categories, Bridgeland stability conditions, systolic inequalities, Calabi--Yau manifolds, mirror symmetry. \\

\noindent \emph{2010 Mathematics Subject Classification}. 14F05 (14J33, 18E30, 53D37).
\end{abstract}

\tableofcontents

%%%%% INTRODUCTION %%%%%
\section{Introduction}

Let $(M, g)$ be a Riemannian manifold. Its \emph{systole} $\sys(M,g)$ is defined to be the least length of a non-contractible loop in $M$. In 1949, Charles~Loewner proved that 
\[
\sys(\T^2,g)^2\leq\frac{2}{\sqrt3}\mathrm{vol}(\T^2,g)
\]
holds for any Riemannian metric $g$ on a two-torus $\T^2$. There are various generalizations of Loewner's tours systolic inequality. We refer to \cite{Katz} for a survey on the rich subject of systolic geometry.

The first goal of the present article is to propose a new generalization of Loewner's torus systolic inequality from the perspective of \emph{Calabi--Yau geometry}. We start with an observation in the case of a two-torus. Suppose that the torus is flat $\T^2_\tau\cong\C/\Z+\tau\Z$, and is equipped with the standard complex structure $\Omega=dz$ and symplectic structure $\omega=dx\wedge dy$. Then the shortest non-contractible loops must be straight lines, therefore are \emph{special Lagrangian submanifolds} with respect to the complex and symplectic structures. Under these assumptions, Loewner's torus systolic inequality can be interpreted as:
\begin{equation}
\label{eq:Loewner}
\inf_{\text{sLag\ }L\subset\T^2_\tau}
\Big| \int_L dz \Big|^2 \leq \frac{1}{\sqrt3}
\Big| \int_{\T^2_\tau} dz \wedge d\overline{z} \Big|
\text{\ \  for all }
\tau\in\H.
\end{equation}
The key observation is that the quantities in both sides of this inequality can be generalized to any Calabi--Yau manifold.

We propose the following definition of \emph{systole of a Calabi--Yau manifold}, with respect to its complex and symplectic structures.

\begin{Def}\label{def:sys_lag}
Let $Y$ be a Calabi--Yau manifold, equipping with a symplectic form $\omega$ and a holomorphic top form $\Omega$. Then its \emph{systole} is defined to be
\[
\sys(Y,\omega,\Omega)\coloneqq \inf \Big\{ \Big| \int _L \Omega \Big| \colon L\text{ is a compact special Lagrangian in }(Y,\omega,\Omega)\Big\}.
\]
\end{Def}

With this definition, we propose the following question that naturally generalizes inequality (\ref{eq:Loewner}) to any Calabi--Yau manifold.

\begin{Question}\label{Q:A}
Let $Y$ be a Calabi--Yau manifold and $\omega$ be a symplectic form on $Y$. Does there exist a constant $C=C(Y,\omega)>0$ such that
\[
\sys(Y,\omega,\Omega)^2
 \leq C\cdot  \Big|\int_Y\Omega\wedge\overline\Omega\Big|
\]
holds for any holomorphic top form $\Omega$ on $Y$?
\end{Question}

Here we treat the Calabi--Yau manifolds topologically so that the complex structures $\Omega$ can vary.
Note that the choice of the symplectic structure is not important in the case of two-tori,
since any one-dimensional submanifold is Lagrangian in $\T^2$.
However, in higher dimensions, the notion of Lagrangian submanifolds certainly depends on the choice of the symplectic structure.
Therefore the systolic constant $C$ should depend on the symplectic structure in general, unlike the case of $\T^2$.
Also, note that the ratio $|\int_L\Omega  |^2 / |\int_Y\Omega\wedge\overline\Omega|$
has been considered in the context of \emph{attractor mechanism} in physics
\cite{DRY, KS2, Moo}, which is of independent interest.

The second goal of the present article is to introduce the definitions of categorical systoles and categorical volumes of
Bridgeland stability conditions on triangulated categories.

\begin{Def}[see Definition \ref{def:sys}]\label{def:sys_stab}
Let $\D$ be a triangulated category and $\sigma=(\ZZ,\PP)$ be a Bridgeland stability condition on $\D$.
Its \emph{systole} is defined to be
$$
\sys(\sigma)\coloneqq\min \{ | \ZZ_\sigma(E) | \colon E \text{ is a }\sigma\text{-semistable object in }\D \}.
$$
\end{Def}

Note that in the definition of systole of a Bridgeland stability condition, the minimum can always be attained by some $\sigma$-stable object $E$, therefore we can write ``min" instead of ``inf" (see Remark~\ref{Rmk:sys}).

\begin{Def}[see Definition \ref{vol}]
Let $\{E_i\}$ be a basis of the numerical Grothendieck group $\NN(\D)$, and $\sigma=(\ZZ,\PP)$ be a Bridgeland stability condition on $\D$.
Its \emph{volume} is defined to be
$$
\vol(\sigma)\coloneqq\Big|\sum_{i,j} \chi^{i,j} \ZZ(E_i) \overline{\ZZ(E_j)}\Big|,
$$
where $(\chi^{i,j})=(\chi(E_i,E_j))^{-1}$ is the inverse matrix of the Euler pairings.
\end{Def}

The motivations of these definitions stem from the correspondence between flat surfaces and stability conditions, and the conjectural description of stability conditions on the Fukaya categories of Calabi--Yau manifolds. We refer to Section \ref{sec:sysvol} for more details. Note that under these correspondences, $\sys(\sigma)$ is the categorical generalization of $\sys(Y,\omega,\Omega)$ in Definition \ref{def:sys_lag}, and $\vol(\sigma)$ is the categorical generalization of the holomorphic volume  $\Big|\int_Y\Omega\wedge\overline\Omega\Big|$.

As a sanity check of these definitions, we prove the following categorical analogue of Loewner's torus systolic inequality.

\begin{Thm}[see Theorem \ref{Thm:Ell}]
\label{thm:introell}
Let $\D=\D^b\Coh(E)$ be the derived category of an elliptic curve $E$.
Then
$$
\sys(\sigma)^2 \leq  \frac{1}{\sqrt3} \cdot \vol(\sigma)
$$
holds for any $\sigma\in\Stab(\D)$.
\end{Thm}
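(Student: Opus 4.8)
The plan is to reduce the statement to the classical Loewner inequality by using the well-known description of the space of stability conditions on $\D^b\Coh(E)$. Bridgeland proved that $\Stab(\D^b\Coh(E))$ is isomorphic (as a complex manifold, and compatibly with the $\widetilde{\GL}^+(2,\R)$-action) to $\widetilde{\GL}^+(2,\R)$ itself, where the quotient by $\GL^+(2,\R)$ parametrizes the underlying ``hearts'' and the central charge of any $\sigma$ is, up to the $\GL^+(2,\R)$-action, the standard one $\ZZ(r,d) = -d + r\tau$ on the numerical Grothendieck group $\NN(\D^b\Coh(E)) \cong \Z^2$ (generated by $[\O_E]$ and $[\O_x]$, with Chern character $(r,d) = (\mathrm{rk}, \deg)$), for some $\tau \in \H$.

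First I would observe that both $\sys$ and $\vol$ transform the same way under the $\GL^+(2,\R)$-action: if $g \in \GL^+(2,\R)$ acts on $\ZZ$ by post-composition with a linear map of determinant $\lambda > 0$, then every $|\ZZ(E)|$ scales, and more precisely the ratio $\sys(\sigma)^2/\vol(\sigma)$ is \emph{invariant} under this action. This is because $\vol(\sigma)$, being the squared norm $|\sum_{i,j}\chi^{i,j}\ZZ(E_i)\overline{\ZZ(E_j)}|$ of the central charge with respect to the Euler form, scales by $\lambda^2$ (the Euler pairing on an elliptic curve is the standard symplectic form on $\Z^2$, so this quantity is, up to constant, $|\mathrm{Im}(\ZZ(E_1)\overline{\ZZ(E_2)})| = $ the area of the parallelogram spanned by the period vectors), while $\sys(\sigma)$ scales by $\sqrt\lambda$. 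Hence it suffices to treat the standard central charges $\ZZ_\tau(r,d) = -d + r\tau$, $\tau\in\H$, one for each $\GL^+(2,\R)$-orbit.

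Next I would identify the two sides for $\sigma = \sigma_\tau$. The semistable objects: by Atiyah's classification and the description of stability on elliptic curves, for any $\sigma_\tau$ the classes of semistable objects realize \emph{every} primitive vector in $\Z^2$, and hence (allowing also non-primitive classes of semistable objects such as direct sums is irrelevant since they only increase $|\ZZ|$) we get $\sys(\sigma_\tau) = \min_{(r,d)\in\Z^2\setminus\{0\}} |-d+r\tau| = $ the length of the shortest nonzero vector in the lattice $\Z + \tau\Z \subset \C$. On the other side, $\vol(\sigma_\tau)$ is, up to the universal constant coming from normalizing the Euler form, equal to $|\mathrm{Im}\,\tau| = \mathrm{Im}\,\tau$, the covolume of that same lattice. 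So the desired inequality becomes exactly the statement that the shortest vector of $\Z+\tau\Z$ squared is at most $\tfrac{1}{\sqrt3}$ times its covolume — which is the lattice-theoretic form of Loewner's inequality, with equality at the hexagonal lattice $\tau = e^{i\pi/3}$. I would then invoke this classical fact (a short reduction-theory argument: move $\tau$ to the standard fundamental domain $|\tau|\geq 1$, $|\mathrm{Re}\,\tau|\leq \tfrac12$, where the shortest vector is $1$ and $\mathrm{Im}\,\tau \geq \tfrac{\sqrt3}{2}$) to conclude.

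The one point requiring genuine care — the \textbf{main obstacle} — is pinning down the exact normalization constant so that it comes out to $1/\sqrt3$ rather than some other multiple: one must check that the Euler-form expression defining $\vol$ really does equal $\mathrm{Im}\,\tau$ on the nose (and not, say, $2\,\mathrm{Im}\,\tau$ or $\tfrac12\,\mathrm{Im}\,\tau$) after choosing the basis $\{[\O_E],[\O_x]\}$ and computing $\chi([\O_E],[\O_x])$, $\chi([\O_E],[\O_E])$, etc., via Riemann--Roch on $E$. A secondary, more conceptual point is to be sure that in Bridgeland's identification every semistable class is actually attained, so that the minimum in $\sys$ is genuinely over all of $\Z^2\setminus\{0\}$ and not a proper sublattice; this follows from the fact that the group of autoequivalences acts on $\Stab$ and the semistable classes are permuted transitively enough, but I would want to state it cleanly. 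Once these normalizations are nailed down, the theorem is exactly Loewner's inequality transported across the flat-surface/stability-condition dictionary.
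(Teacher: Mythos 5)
Your overall strategy coincides with the paper's: reduce to the normalized central charges $\ZZ_\tau(r,d)=-d+r\tau$, use Atiyah's classification to identify $\sys(\sigma_\tau)$ with the shortest nonzero vector of the lattice $\Z+\tau\Z$, identify $\vol(\sigma_\tau)$ with (a multiple of) the covolume $\mathrm{Im}\,\tau$, and invoke the Hermite constant $\gamma_2$. However, two steps as you have written them are actually wrong, not merely unfinished.

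First, the reduction. You assert that the ratio $\sys(\sigma)^2/\vol(\sigma)$ is invariant under the full $\GL^+(2,\R)$-action because $\vol$ scales by $\lambda^2$ and $\sys$ by $\sqrt{\lambda}$. This scaling behaviour holds only for the conformal subgroup (the $\C$-action): a non-conformal $T$ with $\det T=1$, say $\mathrm{diag}(n,1/n)$, preserves areas (hence $\vol$) but can shrink the shortest lattice vector, and hence $\sys$, arbitrarily. Indeed, since by Bridgeland the $\widetilde{\GL^+(2,\R)}$-action on $\Stab(\D^b\Coh(E))$ is free and \emph{transitive}, invariance under the whole group would force the ratio to be constant on $\Stab(\D)$, which it is not (it tends to $0$ as $\tau\to i\infty$). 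The correct reduction, as in the paper, is modulo the $\C$-action only; the quotient is the genuinely two-dimensional family $\{\sigma_\tau\}_{\tau\in\H}$, and the supremum of the ratio must still be taken over all $\tau\in\H$. Your final fundamental-domain argument does range over all $\tau$, so the proof is salvageable, but the invariance claim you use to get there is false and the phrase ``one for each $\GL^+(2,\R)$-orbit'' does not parse, since there is only one orbit.

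Second, the normalization you flag as the main obstacle is precisely where your provisional statement fails. The lattice inequality available is $\lambda_1(\Z+\tau\Z)^2\leq \gamma_2\cdot\mathrm{Im}\,\tau$ with $\gamma_2=2/\sqrt3$ (at the hexagonal point $\tau=e^{i\pi/3}$ one has $\lambda_1=1$ and $\mathrm{Im}\,\tau=\sqrt3/2$), so ``shortest vector squared $\leq \tfrac{1}{\sqrt3}\cdot$ covolume'' is false. The theorem's constant $1/\sqrt3$ comes out only after computing $\vol(\sigma_\tau)=|\ZZ(\O_x)\overline{\ZZ(\O_E)}-\ZZ(\O_E)\overline{\ZZ(\O_x)}|=|\tau-\overline{\tau}|=2\,\mathrm{Im}\,\tau$, i.e.\ \emph{twice} the covolume; then $\sup_\tau \sys(\sigma_\tau)^2/\vol(\sigma_\tau)=\tfrac12\gamma_2=1/\sqrt3$. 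You correctly identified that this factor needs to be pinned down, but until it is, the inequality you propose to quote is the wrong one.
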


We then propose the following algebro-geometric analogue of Question \ref{Q:A},
which is the higher-dimensional generalization of Theorem \ref{thm:introell}.

\begin{Question}\label{Q:B}
Let $X$ be a Calabi--Yau manifold and $\Omega$ be a complex structure on $X$.
Let $\D=\D^b\Coh(X, \Omega)$ be its derived category of coherent sheaves.
Does there exist a constant $C=C(X, \Omega)>0$ such that
$$
\sys(\sigma)^2 \leq C \cdot
\vol(\sigma)
$$
holds for any $\sigma\in\Stab^\dagger(\D)$?
Here $\Stab^\dagger(\D)$ denotes the distinguished connected component of $\Stab(\D)$ that contains geometric stability conditions.
\end{Question}

Note that Questions \ref{Q:A} and \ref{Q:B} are related via the \emph{mirror symmetry conjecture}, which is a conjectural duality between algebraic geometry and symplectic geometry.
The homological mirror symmetry conjecture proposed by Kontsevich \cite{Kon} states that for any Calabi--Yau manifold with a symplectic structure $(Y,\omega)$ , there exists a Calabi--Yau manifold with a complex structure $(X,\Omega)$ such that there is an equivalence between the derived Fukaya category of $Y$ and the bounded derived category of coherent sheaves on $X$:
\[
\D^b\Fuk(Y,\omega)\cong\D^b\Coh(X,\Omega).
\]
It is conjectured by Bridgeland \cite{BriSurvey} and Joyce \cite{Joyce} that a holomorphic top form on $Y$ should give rise to a Bridgeland stability condition $\sigma_\Omega$ on $\D^b\Fuk(Y,\omega)$ (see Conjecture~\ref{conj:BridgelandJoyce}).
The conjectural stability condition satisfies $\sys(\sigma_\Omega)=\sys(Y,\omega,\Omega)$ and $\vol(\sigma_\Omega)=\left|\int_Y\Omega\wedge\overline\Omega\right|$.
Therefore we can consider Question~\ref{Q:B} as the mirror counterpart of Question~\ref{Q:A}.
We refer to Section \ref{sec:future} for more discussions on this.

The third goal, which is the main result of the present article, is to give an affirmative answer to Question \ref{Q:B}
for any complex projective K3 surface. A priori there is no reason to believe that Question \ref{Q:A} and Question \ref{Q:B} have affirmative answers in general. The following theorem is the first evidence that the natural categorical generalization of systolic inequality is possible for higher-dimensional Calabi--Yau manifolds.

\begin{Thm}[see Theorem \ref{mainThm1}]
\label{thm:K3}
Let $X$ be a complex projective K3 surface. Then
\[
\sys(\sigma)^2\leq C\cdot\vol(\sigma)
\]
for any $\sigma\in\Stab^\dagger(\D^b\Coh(X))$, where
\[
C = \frac{((\rho+2)!)^2|\disc\ \NS(X)|}{2^\rho}+4.
\]
Here $\rho$ and $\disc$ denote the rank and the discriminant of the N\'eron--Severi group $\NS(X)$, respectively.
\end{Thm}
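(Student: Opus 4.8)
The plan is to reduce the inequality to a concrete statement about lattice vectors and the central charge, exploiting the structure theory of stability conditions on K3 surfaces due to Bridgeland. Recall that for $X$ a projective K3 surface, the numerical Grothendieck group $\NN(\D^b\Coh(X))$ together with the Euler pairing is isometric to the Mukai lattice $\widetilde{\NS}(X) = H^0 \oplus \NS(X) \oplus H^4$ with the Mukai pairing, a lattice of signature $(2,\rho)$. A stability condition $\sigma \in \Stab^\dagger$ has central charge $\ZZ(-) = \langle \Omega_\sigma, v(-) \rangle$ for some vector $\Omega_\sigma$ in the period domain $\mathcal{P}_0^+ \subset \widetilde{\NS}(X)\otimes\C$, i.e.\ the real and imaginary parts of $\Omega_\sigma$ span a positive-definite two-plane. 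The first step is to observe that $\vol(\sigma)$, being defined intrinsically via the inverse Euler form, equals $|\langle \Omega_\sigma, \overline{\Omega_\sigma}\rangle|$ up to the normalization built into the Mukai pairing; this is the algebraic avatar of $|\int_X \Omega\wedge\overline\Omega|$ and should come out to $2\,\Im(\Omega_\sigma)^2 = 2|\Re(\Omega_\sigma)|^2$ after rescaling so that $\Re\Omega_\sigma \cdot \Im\Omega_\sigma = 0$.

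Next I would bound $\sys(\sigma)$ from above. By Bridgeland's results, every spherical class $\delta$ with $\delta^2 = -2$ and every class $v$ with $v^2 \geq -2$ (or at least a large supply of such classes) is realized by a $\sigma$-semistable object, so $\sys(\sigma) \leq |\ZZ(v)| = |\langle \Omega_\sigma, v\rangle|$ for all such $v$. Thus it suffices to produce a lattice vector $v \in \widetilde{\NS}(X)$ with $v^2 \in \{-2, 0\}$ (hence guaranteed to be an effective/semistable class) whose central charge $|\langle \Omega_\sigma, v\rangle|$ is small relative to $\sqrt{\vol(\sigma)}$. This is a covering/pigeonhole problem: decompose $\Omega_\sigma = A + iB$ with $A, B$ orthogonal of equal length $t = |A| = |B|$, so $\vol(\sigma) \sim t^2$; then $\langle\Omega_\sigma, v\rangle = \langle A, v\rangle + i\langle B, v\rangle$, and I want a short-ish lattice vector nearly orthogonal to the plane $\langle A, B\rangle$. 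Using the geometry of numbers (Minkowski's theorem, or an explicit Gram–Schmidt/reduction argument) on the rank-$(\rho+2)$ lattice $\widetilde{\NS}(X)$, whose determinant is $|\disc\,\NS(X)|$, one finds a nonzero vector $v$ in (a large ball intersected with) the orthogonal complement of $\{A,B\}$ of controlled length; the factor $((\rho+2)!)^2|\disc\,\NS(X)|/2^\rho$ in $C$ is exactly the shape of such a Minkowski-type bound for a lattice of rank $\rho+2$ and the stated determinant. One then arranges $v^2 \geq -2$ by a small perturbation if necessary — and here the additive $+4$ in the constant absorbs the case where no suitable vector lies in the strict orthogonal complement, forcing one to take $v$ with a bounded component along the plane, contributing $|\langle\Omega_\sigma,v\rangle|^2 \leq (\text{something})\cdot t^2$ with constant $4$.

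The main obstacle, and the step that needs the most care, is guaranteeing that the short lattice vector $v$ produced by the lattice-geometric argument is actually the class of a $\sigma$-semistable object — i.e.\ controlling both $v^2$ and the position of $v$ relative to the stability condition simultaneously. One cannot freely choose $v$ in the orthogonal complement of the positive plane $\langle A, B\rangle$ while also demanding $v^2 \geq -2$, since that complement is negative-definite. The resolution is to allow $v$ to have a small but nonzero pairing with $A$ and $B$: split $\widetilde{\NS}(X) \otimes \R$ as the positive plane $\oplus$ its negative-definite complement, run Minkowski's theorem on a box that is long in the two positive directions (length $\sim t$) and short in the negative directions, extract a lattice point $v \neq 0$ inside it, and estimate. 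The long directions give $v^2$ a positive contribution that can be made to dominate (ensuring $v^2 \geq -2$, in fact allowing $v$ to be an actual effective class), while keeping $|\langle A,v\rangle|, |\langle B,v\rangle|$ bounded by a constant multiple of $1$ — independent of $t$ — so that $\sys(\sigma)^2 \leq |\langle\Omega_\sigma,v\rangle|^2 = O(1) \le C\cdot\vol(\sigma)$ once $t$ is large; the small-$t$ regime is handled separately (there $\vol(\sigma)$ is bounded below by a universal constant while $\sys(\sigma)$ is bounded, using that semistable spherical classes always exist). Tracking the constants through Minkowski's theorem — the volume of the box versus $2^{\rho+2}\det$ — and optimizing the box dimensions yields precisely the stated $C$. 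I would also invoke, as a black box, that $\Stab^\dagger$ is connected and that the support property holds there, which is what makes "semistable class" a well-behaved notion and ensures $\sys(\sigma) > 0$, so the inequality is non-vacuous.
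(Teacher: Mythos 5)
Your skeleton matches the paper's: reduce $\sys(\sigma)$ to $\min\{|\ZZ_\sigma(v)|\colon 0\neq v,\ v^2\geq -2\}$ via Bayer--Macr\`i's realizability of such Mukai vectors by semistable objects throughout $\Stab^\dagger$, normalize (up to autoequivalences and $\widetilde{\GL^+(2,\R)}$, which is where the additive $4$ in $C$ actually comes from) to central charges $(\exp(\beta+i\omega),-)$ with $\vol(\sigma)=2\omega^2$, and then run Minkowski's theorem on the rank-$(\rho+2)$ Mukai lattice with a body adapted to the positive $2$-plane spanned by $A=\Re\Omega$ and $B=\Im\Omega$. The gap is in the calibration of that body, and it is not cosmetic. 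Set $t=\sqrt{\omega^2}$, so $|A|=|B|=t$ and $\langle A,v\rangle=t\,x_A$ where $x_A$ is the coordinate of $v$ along the unit vector $A/t$. You ask simultaneously for a box that is ``long ($\sim t$) in the two positive directions'' and for $|\langle A,v\rangle|,|\langle B,v\rangle|$ bounded by a constant \emph{independent of} $t$; these are incompatible. If you enforce $|\langle A,v\rangle|,|\langle B,v\rangle|=O(1)$, the cross-section in the positive plane is $|x_A|,|x_B|\lesssim 1/t$, which shrinks as $t$ grows, so to keep the Minkowski volume above $2^{\rho+2}\sqrt{|\disc\NS(X)|}$ the body must expand into the negative-definite complement and the constraint $v^2\geq -2$ is lost. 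Your fallback for the other regime is also based on a false premise: $\vol(\sigma)=2\omega^2$ is \emph{not} bounded below on the geometric chamber, and small $\omega^2$ is precisely the hard case (whereas large $\omega^2$ is trivially handled by the skyscraper class $(0,0,1)$, which has $v^2=0$ and $|\ZZ(v)|=1$). Moreover, leaning on spherical classes to bound $\sys$ in the remaining regime runs into Proposition \ref{Prop:Sph}: the spherical systolic ratio is unbounded on K3 surfaces.

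The correct calibration, which is what the paper's Proposition \ref{prop:latticeThm1} implements, is to demand $|\Re\ZZ(v)|\leq\sqrt{2}\,t$ and $|\Im\ZZ(v)|\leq C_0 t$, i.e.\ $|x_A|\leq\sqrt2$ and $|x_B|\leq C_0$ --- a region of \emph{constant} Euclidean size in the unit coordinates, uniformly in $t$ --- together with constant size $\sqrt2$ in each of the $\rho$ negative-definite directions. Concretely one takes the cross-polytope on $\rho$ explicit $(-2)$-vectors orthogonal to $\Omega$, one positive vector of square $C_0^2$ in the direction of $\Im\Omega$, and the isotropic vector $(0,0,1)$ rescaled to have real part $\sqrt2 t$; the two linear conditions then hold on the whole convex hull because they hold at the vertices, and the quadratic condition $v^2\geq-2$ is verified on the whole body by a direct computation using $\sum_i|e_i|\leq1$. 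This produces, with no case split on $t$, a nonzero class with $v^2\geq-2$ and $|\ZZ(v)|^2\leq(2+C_0^2)\,\omega^2$, and the volume requirement of Minkowski's theorem forces $C_0>(\rho+2)!\sqrt{|\disc\NS(X)|}/2^{(\rho+1)/2}$, which after the factor $4$ from the $\widetilde{\GL^+(2,\R)}$-reduction gives exactly the stated constant.
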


Moreover, when the K3 surface is of Picard rank one, we can use a different method to get a better systolic bound.

\begin{Thm}[see Theorem \ref{mainThm2}]
Let $X$ be a K3 surface of Picard rank one, with $\NS(X)=\Z H$ and $H^2=2n$.
Then
$$
\sys(\sigma)^2\leq 4(n+1)\cdot\vol(\sigma)
$$
holds for any $\sigma\in\Stab^\dagger(\D^b\Coh(X))$.
\end{Thm}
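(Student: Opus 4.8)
The plan is to reduce the inequality to a purely lattice‑theoretic statement about the Mukai lattice $\Lambda := \NN(X) \cong U \oplus \langle 2n\rangle$ (of signature $(2,1)$, discriminant $-2n$) and then to solve that statement by geometry of numbers. Two reductions, valid for any K3 surface, come first. Write the central charge as $\ZZ_\sigma(-) = \langle \Omega_\sigma, -\rangle$ for the unique vector $\Omega_\sigma \in \Lambda_\C$ (using the Mukai pairing $\langle-,-\rangle$, under which $\chi(E,F) = -\langle v(E), v(F)\rangle$). A direct computation with the inverse Gram matrix gives the clean identity $\vol(\sigma) = \langle \Omega_\sigma, \overline{\Omega_\sigma}\rangle$, which is positive since $\mathrm{Re}\,\Omega_\sigma$ and $\mathrm{Im}\,\Omega_\sigma$ span a positive‑definite plane: one uses that $\chi^{i,j}$ is symmetric, so the imaginary part of $\sum \chi^{i,j}\ZZ(E_i)\overline{\ZZ(E_j)}$ cancels, together with the standard identity $\sum_{i,j}(\chi^{-1})^{i,j}\langle u, E_i\rangle\langle u, E_j\rangle = -\langle u, u\rangle$. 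Second, by non‑emptiness of moduli spaces of Bridgeland‑semistable objects on projective K3 surfaces (Bayer--Macrì), for every primitive $v \in \Lambda$ with $\langle v, v\rangle \geq -2$ and every $\sigma \in \Stab^\dagger(\D^b\Coh(X))$ there is a $\sigma$‑semistable object with Mukai vector $v$, so $\sys(\sigma) \leq |\langle \Omega_\sigma, v\rangle|$ for all such $v$. It therefore suffices to prove: for every $\Omega \in \Lambda_\C$ whose real and imaginary parts span a positive‑definite plane $P$, there is $0 \neq v \in \Lambda$ with $\langle v, v\rangle \geq -2$ and $|\langle \Omega, v\rangle|^2 \leq 4(n+1)\,\langle \Omega, \overline{\Omega}\rangle$.

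Next I would normalize. Acting by $\C^*$ (which rescales both sides of the desired inequality by the same factor) one may assume $\Omega = \Omega_1 + i\Omega_2$ with $\Omega_1 \perp \Omega_2$ and $\langle \Omega_1,\Omega_1\rangle = \lambda^2 \geq \mu^2 = \langle \Omega_2, \Omega_2\rangle$. Decomposing $v = v_P + v_{P^\perp}$ orthogonally and writing $v_P = a\widehat\Omega_1 + b\widehat\Omega_2$ in a $\langle-,-\rangle$‑orthonormal frame of $P$, one gets
\[
|\langle \Omega, v\rangle|^2 = \lambda^2 a^2 + \mu^2 b^2 \leq (\lambda^2 + \mu^2)(a^2+b^2), \qquad \langle \Omega, \overline{\Omega}\rangle = \lambda^2 + \mu^2,
\]
together with $\langle v, v\rangle = (a^2 + b^2) - |\langle v_{P^\perp}, v_{P^\perp}\rangle|$. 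Hence it is enough to produce $v \in \Lambda$ with $\langle v_P, v_P\rangle = a^2+b^2 \leq 4(n+1)$ and $\langle v, v\rangle \geq -2$: either a lattice vector lying inside $P$ of square‑length $\leq 4(n+1)$, or a spherical (resp.\ isotropic) vector lying within $\langle-,-\rangle$‑distance‑squared $\leq 4(n+1)+2$ (resp.\ $4(n+1)$) of $P$.

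The heart is this lattice problem, which I would split into two cases. For the ``geometric'' planes $P = \mathrm{span}(\mathrm{Re}\,e^{B+i\omega}, \mathrm{Im}\,e^{B+i\omega})$, parametrized by $\tau = b+it \in \H$ via $\NS(X) = \Z H$, $B=bH$, $\omega=tH$, one computes $\langle \Omega_\tau, (r, cH, s)\rangle = -nr\tau^2 + 2nc\tau - s$ and $\langle \Omega_\tau, \overline{\Omega_\tau}\rangle = 4nt^2$; taking the isotropic Mukai vectors $(q^2, qpH, np^2)$, whose central charge is $-n(q\tau - p)^2$, and letting $p/q$ be a continued‑fraction convergent of $b$ with $q \leq t^{-1/2} < q'$ (the next denominator), one gets $|q\tau - p|^2 = \|qb\|^2 + q^2 t^2 < 2t$ for $t \leq 1$, hence $\sys(\sigma_\tau)^2 < 4n^2 t^2 = n\cdot\vol(\sigma_\tau)$; for $t \geq 1$ the skyscraper class $(0,0,1)$ gives $\sys(\sigma_\tau)^2 \leq 1 \leq \vol(\sigma_\tau)/(4n)$, and rational $b$ is handled separately. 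For a general (``tilted'') plane $P$ with $M := P\cap\Lambda$ of rank two (primitive, positive‑definite), every vector of $M$ has $\langle v,v\rangle \geq 2$, and I would exhibit a short vector of $M$ — using the orthogonal splitting $\Lambda = U \oplus \Z H$, the abundance of primitive isotropic vectors in $U$, and the way the $H$‑coordinate offsets a slightly negative $U$‑norm — to show $m_1(M) \leq 4(n+1)$; equivalently, using nearly‑isotropic spherical classes $v$ almost orthogonal to the generator $w$ of $M^\perp\cap\Lambda$, for which $\langle v,w\rangle^2 = |\langle w,w\rangle|\,(\langle v_P,v_P\rangle + 2)$ forces $\langle v,w\rangle^2 \geq 2|\langle w,w\rangle|$, and a Hasse--Minkowski solvability argument produces such $v$ with $\langle v,w\rangle^2/|\langle w,w\rangle| \leq 4n+6$. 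Irrational planes are handled by approximating by rational ones and passing to the limit, the inequality being closed.

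The main obstacle is this general tilted‑plane case: the discriminant of $M = P\cap\Lambda$ is unbounded, so the naive Minkowski estimate $m_1(M) \leq \tfrac{2}{\sqrt3}\sqrt{|\mathrm{disc}\,M|}$ does not suffice, and one must genuinely exploit the arithmetic of $U \oplus \langle 2n\rangle$ to see that $M$ always contains a vector of square‑length $\leq 4(n+1)$ — equivalently, to control the least value of $\langle v,w\rangle^2/|\langle w,w\rangle|$ over spherical $v$, which is a solvability‑with‑a‑size‑bound question for the system $\{\langle v,v\rangle = -2,\ \langle v,w\rangle = \text{given}\}$. The constant $4(n+1)$ (rather than the $O(n)$ of Theorem~\ref{thm:K3}) is exactly what this rank‑one analysis yields, and the additive ``$+$'' in both bounds traces to the regime where $\langle\Omega,\overline\Omega\rangle$ is bounded below and one merely invokes the skyscraper class $(0,0,1)$.
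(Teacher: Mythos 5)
Your opening reductions match the paper's: the volume identity $\vol(\sigma)=|\langle\Omega_\sigma,\overline{\Omega_\sigma}\rangle|$ is Example \ref{eg:K3}, and the use of Bayer--Macr\`i non-emptiness to convert $\sys(\sigma)$ into $\min\{|\ZZ_\sigma(v)|:v^2\ge-2\}$ is exactly Proposition \ref{Prop:key}. The problem is what you reduce \emph{to}. You replace ``$\sigma\in\Stab^\dagger$'' by ``$\Omega\in\Lambda_\C$ spanning an arbitrary positive-definite plane,'' and the case you then cannot close --- the ``tilted'' plane $P$ with $P\cap\Lambda$ of rank two and unbounded discriminant, where you admit Minkowski's bound fails and you only gesture at a Hasse--Minkowski-with-size-bounds argument --- is precisely the generic case of your reformulation. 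As written this is a genuine gap: you have not produced, for a general positive-definite plane, a class $v$ with $v^2\ge-2$ and $\langle v_P,v_P\rangle\le 4(n+1)$, and nothing in the proposal indicates how the arithmetic of $U\oplus\langle 2n\rangle$ would actually deliver it.

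The gap is also avoidable, and this is where the paper goes a different way. By Theorem \ref{thm:BridgelandK3}, every $\sigma\in\Stab^\dagger$ is carried by an autoequivalence (a Mukai-lattice isometry, preserving both $\sys$ and $\vol$) into the closure of $U(\D)$, continuity of the systolic ratio handles the closure, and every point of $U(\D)$ is a $\widetilde{\GL^+(2,\R)}$-translate of a point of $V(\D)$ whose central charge is literally $(\exp(\beta+i\omega),-)$ with $\beta,\omega\in\NS(X)\otimes\R$. The translate changes the frame but not the plane, and by Lemma \ref{Lem:sys}(4) together with Example \ref{eg:K3} it inflates the ratio by less than $4$. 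So only your ``geometric plane'' case is ever needed, and there your continued-fraction argument with the isotropic classes $(q^2,qpH,np^2)$ does work (modulo writing out the rational-$b$ endgame); it is a legitimate alternative to the paper's Lemma \ref{Lem:est}, which instead pigeonholes $\{2n\beta d_j+n\beta^2 j\}$ mod $1$ to produce a class with $0\le v^2\le 2n$ rather than an isotropic one, and it yields the same shape of constant, $n\cdot\vol$ on $V(\D)$ and hence $4(n+1)$ overall. To repair the proposal, delete the general-plane reduction and insert the reduction to $V(\D)$ via Theorem \ref{thm:BridgelandK3}; without that step the proof is incomplete at its central point.
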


Finally, we remark that one can also define a categorical generalization of systole using only the \emph{spherical objects} (Remark \ref{Rmk:Sph}), which we denoted by $\sys_\sph(\sigma)$. However, as we prove in Proposition \ref{Prop:Sph}, the ratio $\sys_\sph(\sigma)^2/\vol(\sigma)$ is unbounded in general.

\subsection*{Related work}
After the first version of the present article was posted online,
Haiden \cite{Hai} proves a systolic inequality for certain higher-dimensional symplectic torus,
and Pacini \cite{Pacini} proposes a higher-dimensional generalization of extremal lengths and complex systolic inequalities.

\subsection*{Organization}
In Section 2, we recall the definition of Bridgeland stability conditions and 
introduce the notions of categorical systole and categorical volume.
In Section 3, we give an affirmative answer to Question \ref{Q:B} for elliptic curves by proving Theorem \ref{thm:introell}.
In Section 4, we give an affirmative answer to Question \ref{Q:B} for any K3 surface by proving Theorem \ref{thm:K3}.
In Section 5, we discuss some directions for future studies.

\subsection*{Acknowledgements}
The author would like to thank Jayadev~Athreya, Simion~Filip, Fabian~Haiden, Pei-Ken~Hung, Atsushi~Kanazawa, Kuan-Wen~Lai, Heather~Lee, Yu-Shen~Lin, Emanuele~Macr\`i, Nikolay~Moshchevitin, Shing-Tung~Yau and Xiaolei~Zhao for helpful discussions and correspondences. This work was partially supported by Simons Collaboration Grant on Homological Mirror Symmetry.
The author would also like to thank an anonymous referee for helpful comments and suggestions.

%%%%% Definitions %%%%%
\section{Categorical systoles and categorical volumes}
\label{sec:sysvol}

\subsection{Bridgeland stability conditions}

In the seminal work \cite{Bri}, Bridgeland introduced the notion of stability conditions on triangulated categories.
We recall the definition and some basic properties of Bridgeland stability conditions.

Throughout the article,
a triangulated category $\D$ is essentially small, linear over $\C$, and is of finite type.
The last condition means that for any pair of objects $E, F\in\D$, the $\C$-vector space
$\oplus_{i\in\Z}\Hom_{\D}(E,F[i])$ is of finite-dimensional.
The Euler form $\chi$ on the Grothendieck group $K(\D)$ is given by the alternating sum
$$
\chi(E,F)\coloneqq\sum_{i}(-1)^i \dim_\C \Hom_{\D}(E,F[i]). 
$$
The numerical Grothendieck group $\NN(\D):=K(\D)/K(\D)^{\perp_{\chi}}$ is defined to be the quotient of $K(\D)$ by the null space of the Euler pairing $\chi$. 
We assume that $\D$ is \emph{numerically finite}, that is, $\NN(\D)$ is of finite rank. 
One large class of examples of such triangulated categories is provided by the bounded derived category of coherent sheaves $\D^b\Coh(X)$ of a smooth projective variety $X$.

%Now we recall the definition of Bridgeland stability conditions on a triangulated category $\D$.
%We will only consider the Bridgeland stability conditions that are \emph{full} and \emph{numerical}.

\begin{Def}[\cite{Bri}]\label{def:stab}
A (full numerical) stability condition $\sigma=(\ZZ,\PP)$ on a triangulated category $\D$ consists of:
	\begin{itemize}
	\item a group homomorphism $\ZZ\colon\NN(\D)\rightarrow\C$, and
	\item a collection of full additive subcategories $\PP=\{\PP(\phi)\}_{\phi \in \R}$ of $\D$,
	\end{itemize}
such that: 
\begin{enumerate}[label=(\alph*)]
\item If $0\neq E \in \PP(\phi)$, then $\ZZ(E)\in\R_{>0}\cdot e^{i\pi \phi}$.
\item $\PP(\phi+1)=\PP(\phi)[1]$. 
\item If $\phi_1>\phi_2$ and $A_i \in \PP(\phi_i)$, then $\Hom(A_1,A_2)=0$. 
\item For every $0 \ne E \in \mathcal{D}$, there exists a (unique) collection of distinguished triangles
$$
\xymatrix{
0=E_0 \ar[r] & E_1\ar[d]  \ar[r] & E_2 \ar[r] \ar[d]& \cdots \ar[r]& E_{k-1}\ar[r] & E \ar[d]\\
                    & B_1 \ar@{-->}[lu]& B_2 \ar@{-->}[lu] &  & & B_k  \ar@{-->}[lu] 
}
$$
such that $B_i \in \PP(\phi_i)$ and $\phi_1>\phi_2>\cdots>\phi_k$.
Denote $\phi^+_\sigma(E)\coloneqq\phi_1$ and $\phi^-_\sigma(E)\coloneqq\phi_k$.
The \emph{mass} of $E$ is defined to be $m_\sigma(E)\coloneqq\sum_i |\ZZ(B_i)|$.
\item\label{def:supportprop}
(Support property \cite{KS}) There exists a constant $C>0$ and a norm $||\cdot||$
on $\NN(\D)\otimes_\Z \R$ such that
$$
||E||\leq C|\ZZ(E)|
$$
for any semistable object $E$.
\end{enumerate}
\end{Def}

The group homomorphism $\ZZ$ is called the \emph{central charge}, and the nonzero objects in $\PP(\phi)$ are called the \emph{semistable objects of phase $\phi$}.
The additive subcategories $\PP(\phi)$ actually are abelian,
and the simple objects of $\PP(\phi)$ are said to be \emph{stable}.

The space of (full numerical) Bridgeland stability conditions on $\D$ is denoted by $\Stab(\D)$.
There is a nice topology on $\Stab(\D)$ introduced by Bridgeland,
which is induced by the generalized distance:
$$
d(\sigma_1,\sigma_2)=\sup_{0\neq E\in\D}\Big\{|\phi^-_{\sigma_2}(E)-\phi^-_{\sigma_1}(E)|, |\phi^+_{\sigma_2}(E)-\phi^+_{\sigma_1}(E)|, |\log\frac{m_{\sigma_2}(E)}{m_{\sigma_1}(E)}|\Big\}\in[0,\infty].
$$

The forgetful map
$$
\Stab(\D)\longrightarrow \Hom(\NN(\D),\C), \ \ \ \sigma=(\ZZ,\PP) \mapsto \ZZ
$$
is a local homeomorphism  \cite{Bri, KS}.
Hence $\Stab(\D)$ is a complex manifold.

There are two natural group actions on the space of Bridgeland stability conditions $\Stab(\D)$ which commute with each other \cite[Lemma 8.2]{Bri}. Firstly, the group of autoequivalences $\Aut(\D)$ acts on $\Stab(\D)$ as isometries with respect to the generalized metric: Let $\Phi\in\Aut(\D)$ be an autoequivalence, define
\[
\sigma=(\ZZ, \PP) \mapsto \Phi\cdot\sigma\coloneqq(\ZZ\circ[\Phi]^{-1}, \PP'),
\]
where $[\Phi]$ is the induced automorphism on $\NN(\D)$, and $\PP'(\phi)\coloneqq\Phi(\PP(\phi))$.

Secondly, the universal cover $\widetilde{\GL^+(2, \R)}$ also admits a natural group action on $\Stab(\D)$. Recall that $\widetilde{\GL^+(2, \R)}$ is isomorphic to the group of pairs $(T,f)$, where $T\in\GL^+(2,\R)$ and $f:\R\ra\R$ is an increasing map with $f(\phi+1)=f(\phi)+1$, such that their induced maps on $(\R^2\bs\{0\})/\R_{>0}\cong \R/2\Z \cong S^1$ coincide. Let $g=(T,f)\in\widetilde{\GL^+(2, \R)}$, define
\[
\sigma=(\ZZ, \PP) \mapsto \sigma\cdot g \coloneqq (T^{-1}\circ \ZZ, \PP''),
\]
where $\PP''(\phi)\coloneqq\PP(f(\phi))$. Note that the subgroup $\C\subset\widetilde{\GL^+(2, \R)}$ acts freely and transitively on $\Stab(\D)$. Let $z\in\C$, then its action on $\Stab(\D)$ is
\[
\sigma=(\ZZ, \PP) \mapsto \sigma\cdot z\coloneqq (\exp(-i\pi z)\ZZ, \PP'''),
\]
where $\PP'''(\phi)\coloneqq\PP(\phi+\mathrm{Re}(z))$.

%%%%% Cat sys %%%%%
\subsection{Categorical systoles}
We first recall some results and conjectures that motivate our definition of categorical systoles.

In a striking series of work by Gaiotto--Moore--Neitzke \cite{GMN}, Bridgeland--Smith \cite{BS} and Haiden--Katzarkov--Kontsevich \cite{HKK}, the connections between stability conditions and Teichm\"uller theory have been established. One of the main results in this direction is the following theorem.

\begin{Thm}[\cite{HKK}, Theorem 5.2 and 5.3]
\label{thm:HKK}
Let $S$ be a marked surface of finite type, $\cM(S)$ be the space of marked flat structures on $S$, and $\Fuk(S)$ be the Fukaya category of $S$. Then there is a natural map
\[
\cM(S)\ra\Stab(\Fuk(S))
\]
which is bianalytic onto its image, which is a union of connected components. Under this bianalytic map,
\begin{itemize}
\item saddle connections on flat surfaces are corresponded to semistable objects,
\item their lengths are corresponded to the absolute values of central charges. 
\end{itemize}
\end{Thm}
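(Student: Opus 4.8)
The plan is to follow the strategy of \cite{HKK}: build the map explicitly, verify Bridgeland's axioms using the flat geometry of the surface, and then deduce the topological statements from Bridgeland's local homeomorphism theorem together with a compactness argument.

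\textbf{Construction of the map.} A marked flat structure on $S$ is encoded by a quadratic differential $q$ on the underlying Riemann surface, with prescribed orders of zeros and poles dictated by the marking; it determines a flat metric $|q|$ with conical singularities together with a horizontal foliation. First I would use the explicit model of $\Fuk(S)$ as the category of perfect modules over the graded gentle algebra attached to a full arc system on $S$, under which arcs correspond to the generating objects. Given $q$, take the arc system dual to the \emph{core} (spine) of the flat surface obtained by collapsing its horizontal strips; the arcs are then literally saddle connections of $q$. Define the central charge on the class of an arc $\gamma$ by the period $\ZZ(\gamma)\coloneqq\int_\gamma\sqrt q$, extend $\Z$-linearly along the natural identification of $\NN(\Fuk(S))$ with the homology lattice that carries the period coordinates, and let $\PP(\phi)$ be the subcategory generated by extensions and shifts of the saddle connections whose flat direction has phase $\phi$.

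\textbf{Verifying the axioms.} The content is that $(\ZZ,\PP)$ is a stability condition whose semistable objects are precisely the shifts of saddle connections. For the Harder--Narasimhan property I would use that every free homotopy class of curve on $S$ has a unique $|q|$-geodesic representative, and that this geodesic decomposes canonically as a concatenation of saddle connections with non-increasing phases; transporting this decomposition through the arc-system model of $\Fuk(S)$ yields the required filtration, with phases in the correct order because $\bigl|\int\sqrt q\bigr|$ is additive exactly along such concatenations and strictly subadditive otherwise. The support property is then immediate from flat geometry: for a fixed norm $\|\cdot\|$ one has $\|[\gamma]\|\le C\cdot\ell_{|q|}(\gamma)=C\,|\ZZ(\gamma)|$, since a homology class is bounded by the length of any geodesic representing it.

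\textbf{Topology of the map.} Injectivity holds because $q$ is recovered from $\sigma$: the semistable objects of each phase and their central charges reconstruct all saddle connections, hence the flat metric. For bianalyticity, observe that the period map on $\cM(S)$ is a local biholomorphism onto a linear subspace, that the forgetful map $\Stab(\Fuk(S))\to\Hom(\NN(\Fuk(S)),\C)$ is a local homeomorphism (Bridgeland, as recalled above), and that our map intertwines them up to the homology identification; hence it is a local homeomorphism, and holomorphic because the period coordinates are. Finally, the image is open (local homeomorphism) and I would show it is also closed: if $\sigma_n\to\sigma$ in $\Stab$ with each $\sigma_n$ coming from $q_n\in\cM(S)$, then convergence of masses and phases keeps the $q_n$ away from the boundary of $\cM(S)$ — a degeneration would force some saddle connection's length to $0$, contradicting the support property, which persists in the limit, or would change the combinatorial type of $S$ — so $q_n\to q\in\cM(S)$ and $\sigma$ is the stability condition attached to $q$. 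Being open and closed, the image is a union of connected components.

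\textbf{Main obstacle.} The delicate point is the Harder--Narasimhan property together with the identification ``semistable $=$ saddle connection''. This requires translating the flat-geometric fact that every curve straightens to a concatenation of saddle connections with decreasing phases into the algebra of the graded gentle-algebra model of $\Fuk(S)$, and ruling out unexpected semistable objects — equivalently, checking that the combinatorics of the chosen arc system is compatible with the geodesic flow of $|q|$ for \emph{all} $q$ in the stratum, not merely a generic one. The closedness step is similarly subtle, as one must exclude degenerations of $q_n$ that remain inside $\Stab(\Fuk(S))$ while leaving $\cM(S)$, which is exactly where the precise correspondence between strata of differentials and connected components of $\Stab$ is used.
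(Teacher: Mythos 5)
This statement is not proved in the paper at all: it is imported verbatim from \cite{HKK} (Theorems 5.2 and 5.3) purely as motivation for Definition \ref{def:sys}, so there is no internal argument to compare yours against. Judged as a reconstruction of the proof in \cite{HKK}, your outline identifies the right overall strategy (arc-system/graded gentle algebra model of $\Fuk(S)$, periods of $\sqrt q$ as central charges, Harder--Narasimhan filtrations from flat geodesic representatives, Bridgeland's local homeomorphism plus an open-and-closed argument for the image), but two of its steps have genuine gaps rather than being routine verifications.

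First, the semistable objects are not exhausted by saddle connections. In \cite{HKK} the indecomposable objects of $\Fuk(S)$ are classified as graded immersed curves equipped with local systems, and the stable objects of the stability condition attached to $q$ include, besides the saddle connections, the closed flat geodesics carrying indecomposable local systems (these occur in positive-dimensional families, e.g.\ the core curves of flat cylinders). Your definition of $\PP(\phi)$ as ``generated by extensions and shifts of the saddle connections of phase $\phi$'' therefore omits part of each slice, and the Harder--Narasimhan property would fail for an object supported on such a loop. Relatedly, the HN argument cannot be run at the level of free homotopy classes alone: one must first establish the classification of objects of $\Fuk(S)$ by immersed curves with local systems, then show that the $|q|$-geodesic representative decomposes into saddle connections and closed geodesics with decreasing phases, and finally convert that concatenation into a Postnikov tower in the category. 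Your sentence ``transporting this decomposition through the arc-system model yields the required filtration'' asserts exactly the hard step rather than proving it, as you yourself partly acknowledge. Second, the closedness of the image is not a consequence of the support property alone: to conclude that a convergent sequence $\sigma_n\to\sigma$ with $\sigma_n$ coming from $q_n$ forces $q_n$ to converge in $\cM(S)$, you must control the entire flat metric (area, combinatorial type of the horizontal strip decomposition, behaviour at the marked points with exponential-type singularities), not merely keep saddle-connection lengths bounded below; the one-line justification ``a degeneration would force some saddle connection's length to $0$'' does not cover, for instance, degenerations in which the area diverges or the singularity data changes. These are precisely the places where the detailed analysis of \cite{HKK} is needed, so the proposal should be regarded as a correct skeleton with the two essential lemmas left unproved.
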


Recall that the systole of a flat surface is defined to be the length of its shortest saddle connection. Based on the correspondence established in Theorem \ref{thm:HKK},
it is natural to define the systole of a Bridgeland stability condition to be the smallest absolute value of central charge of semistable objects.

Another important source of motivation for defining categorical systole is a conjectural description of stability conditions on the Fukaya categories of Calabi--Yau manifolds, proposed by Bridgeland and Joyce.

\begin{Conj}[\cite{BriSurvey}, Section 2.4 and \cite{Joyce}, Conjecture 3.2]
\label{conj:BridgelandJoyce}
Let $Y$ be a Calabi--Yau manifold equipping with a symplectic form $\omega$, and $\D^\pi\Fuk(Y,\omega)$ be the derived Fukaya category of $Y$. For any holomorphic top form $\Omega$ on $Y$, there exists a natural Bridgeland stability condition $\sigma_{\Omega}$ on $\D^\pi\Fuk(Y,\omega)$, such that the central charges are given by the period integrals along Lagrangians
\[
\ZZ_\Omega(L)=\int_L\Omega,
\]
and the $\sigma_\Omega$-semistable objects are given by compact special Lagrangian submanifolds with respect to $\Omega$ and $\omega$.
\end{Conj}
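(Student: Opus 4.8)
Since Conjecture \ref{conj:BridgelandJoyce} is widely open, what follows is the route by which one would expect to prove it, together with the step at which such an argument is, at present, stuck. I would first split the conjecture into two essentially independent halves: (i) the \emph{existence} of a Bridgeland stability condition on $\D^\pi\Fuk(Y,\omega)$ whose central charge on Lagrangian branes is the period map $\ZZ(L)=\int_L\Omega$; and (ii) the \emph{geometric identification} of its semistable objects with special Lagrangians. Part (i) is a problem in the theory of $t$-structures and stability conditions on Fukaya categories; part (ii) is an analytic problem about the special Lagrangian equation.

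For (i), the most promising route is through homological mirror symmetry. Fixing a mirror partner $(\hat Y,\hat\Omega_t)$ with a maximal-degeneration point and its period map, an equivalence $\D^\pi\Fuk(Y,\omega)\simeq\D^b\Coh(\hat Y)$ reduces existence to constructing the required stability condition on the $B$-side, where it is available in the cases of interest: for K3 and abelian surfaces by Bridgeland, and conjecturally for Calabi--Yau threefolds following Bayer--Macr\`i--Toda, unconditionally in some examples such as the quintic (Li). One then has to verify that the transported central charge really is the period map: the \emph{true} central charge on $\D^b\Coh(\hat Y)$, which should have the $\widehat\Gamma$-corrected form $E\mapsto\int_{\hat Y}e^{-(B+i\omega)}\,\widehat\Gamma_{\hat Y}\,\ch(E)$, must match the $A$-side periods $\int_L\Omega$ under the mirror map and the identification $K(\D^b\Coh(\hat Y))\cong K(\D^\pi\Fuk(Y,\omega))$ of sheaves with Lagrangian branes, up to normalization. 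In complex dimension one this is essentially the flat-surface construction behind Theorem \ref{thm:HKK}; in dimension $\geq 2$, lacking a geometric mirror, one would instead have to build a bounded $t$-structure on $\D^\pi\Fuk(Y,\omega)$ directly from graded Lagrangian branes of controlled phase, which is open.

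For (ii), the expected mechanism is the Thomas--Yau program in the refined form proposed by Joyce: an unobstructed graded Lagrangian brane $L$ should be $\sigma$-semistable precisely when its Lagrangian mean curvature flow exists for all time --- up to finitely many neck-pinch singularities, modelled on Lawlor necks, at which a Lagrangian surgery splits off the destabilizing subobject of largest phase --- and converges to a special Lagrangian, the singular times producing exactly the Harder--Narasimhan and Jordan--H\"older filtrations dictated by the phases $\arg\int_{L_i}\Omega$. Making this rigorous requires the finite-time singularity analysis of Lagrangian mean curvature flow in arbitrary dimension, including the classification of blow-up limits and the surgery construction, matched to the exact triangles of the Fukaya category; this is the principal obstacle and the reason the statement remains conjectural. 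The support property of the candidate stability condition is a separate requirement, expected --- as on the $B$-side --- to follow from a discriminant-type bound on $[L]\cdot[L]$ together with Hodge-theoretic positivity of the period pairing; since for a calibrated $L$ one has $|\ZZ(L)|=\vol(L)$, by Remark \ref{Rmk:sys} it is exactly what would ensure $\sys(\sigma)>0$ and make the systolic questions of this paper meaningful on the Fukaya side. In summary, the existence of such a stability condition, and the support property, are within reach in good cases such as K3 surfaces and abelian varieties, whereas the special-Lagrangian description of semistable objects --- the heart of the conjecture --- would follow only from a suitable form of the still-open Thomas--Yau--Joyce conjecture on Lagrangian mean curvature flow.
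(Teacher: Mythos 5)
This statement is an open conjecture of Bridgeland and Joyce, quoted from \cite{BriSurvey} and \cite{Joyce}; the paper offers no proof of it and uses it only as motivation for Definitions \ref{def:sys} and \ref{vol}, so there is nothing in the text to compare your argument against. You correctly recognize that the statement cannot currently be proved, and your roadmap --- existence of the stability condition via homological mirror symmetry transport from the $B$-side, and the identification of semistable objects with special Lagrangians via the Thomas--Yau--Joyce picture of Lagrangian mean curvature flow with surgeries --- is an accurate account of the expected strategy and of where it is stuck; your remark that the support property is what makes $\sys(\sigma)>0$ on the Fukaya side correctly ties the conjecture to the role it plays in this paper. Since you claim no more than a programme and honestly flag the open analytic core, there is no gap to report, only the observation that a referee would expect this item to be labelled a conjecture rather than submitted with a proof.
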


If we assume this conjecture to be true, then the definition of systole of $(Y,\omega,\Omega)$ in Definition \ref{def:sys_lag} can be written as
\begin{align*}
\sys(Y,\omega,\Omega) &\coloneqq \inf \left\{ \Big| \int _L \Omega \Big| \colon L\text{ is a compact special Lagrangian in }(Y,\omega,\Omega)\right\}\\
&=\inf\left\{|\ZZ(L)|\colon L\text{ is a }\sigma_\Omega\text{-semistable object in} \D^\pi\Fuk(Y,\omega)\right\}
\end{align*}

Motivating from the above discussions, we propose the following definition of systole of a Bridgeland stability condition.

\begin{Def}\label{def:sys}
Let $\sigma$ be a Bridgeland stability condition on a triangulated category $\D$.
Its \emph{systole} is defined to be
$$
\sys(\sigma)\coloneqq\min \{ | \ZZ_\sigma(E) | \colon E \text{ is a }\sigma\text{-semistable object in }\D \}.
$$
\end{Def}

\begin{Rmk}\label{Rmk:sys}
Note that we can write ``min" instead of ``inf" in the definition of categorical systole for the following reason.
For any $R>0$, consider the following two subsets of $\NN(\D)$:
$$
S_R^{(1)}\coloneqq\{v\in\NN(\D)\colon\text{there exists }\sigma\text{-semistable }E\text{ such that } [E]=v \text{ and } |\ZZ_\sigma(v)|<R\},
$$
$$
S_R^{(2)}\coloneqq\{v\in\NN(\D)\colon\text{there exists }\sigma\text{-semistable }E\text{ such that } [E]=v \text{ and } ||v||<CR\},
$$
where $C>0$ and $||\cdot||$ are the constant and the norm on $\NN(\D)\otimes_\Z\R$ appeared in the support property of stability conditions, see Definition~\ref{def:stab}\ref{def:supportprop}.
By the support property, we have $S_R^{(1)}\subset S_R^{(2)}$.
The set $S_R^{(2)}$ is finite since it is a subset of $\{v\in\NN(\D)\colon||v||<CR\}$ which is finite.
Therefore $S_R^{(1)}$ is a finite set, which implies that the minimum of
\[
 \{ | \ZZ_\sigma(E) | \colon E \text{ is a }\sigma\text{-semistable object in }\D \}
\]
in the definition of systole must be attained by some $\sigma$-semistable object $E$.
In fact, the systole must be attained by some $\sigma$-\emph{stable} object: if $E$ is a $\sigma$-semistable object and not a $\sigma$-stable object, then it is clear that the absolute value of the central charge of any non-trivial $\sigma$-stable factor of $E$ is strictly less than $|Z_{\sigma}(E)|$.
\end{Rmk}

\begin{Rmk}
In the following table, we summarize the correspondences between the systoles of flat surfaces, Calabi--Yau manifolds, and stability conditions discussed in Theorem~\ref{thm:HKK} and Conjecture~\ref{conj:BridgelandJoyce}.
\begin{center}
 \begin{tabular}{||c  |  c  |  c ||} 
 \hline
Surface $S$ & Calabi--Yau $(Y, \omega)$ & Triangulated category  \\
 \hline\hline
abelian differentials & holomorphic top forms & stability conditions  \\
 \hline
saddle connections & special Lagrangians & semistable objects \\
 \hline 
lengths & period integrals & central charges \\
 \hline
$\sys(S)$ & $\sys(Y, \omega, \Omega)$ & $\sys(\sigma)$ \\
 \hline
  & $|\int_Y\Omega\wedge\overline\Omega|$ & $\vol(\sigma)$ \\
 \hline
\end{tabular}
\end{center}
The definition of categorical volume $\vol(\sigma)$, which is also inspired by Conjecture \ref{conj:BridgelandJoyce}, will be discussed in the next subsection.
\end{Rmk}

Now we study how the categorical systole changes under the natural group actions on $\Stab(\D)$.

\begin{Lem}
\label{Lem:sys}
Let $\sigma$ be a Bridgeland stability condition on $\D$. Then
	\begin{enumerate}[label=(\alph*)]
	\item
	\begin{align*}
	\sys(\sigma) & = \min\{|\ZZ_\sigma(E)| \colon E \text{ is a } \sigma\text{-stable object in } \D \} \\
	& = \min \{ m_\sigma(E) \colon 0\neq E\in\D \}
	\end{align*}
	Recall that $m_\sigma(E)$ is the mass of $E$ with respect to $\sigma$ (Definition \ref{def:stab}).
	
	\item
	For any autoequivalence $\Phi\in\Aut(\D)$, $\sys(\Phi\cdot\sigma) = \sys(\sigma)$.
	
	\item
	For any complex number $z=x+iy\in\C$,
	$\sys(\sigma\cdot z) = \exp(y\pi)\cdot \sys(\sigma)$.
	
	\item
	For any $g=(T,f)\in\widetilde{\GL^+(2, \R)}$, write $T^{-1}=\begin{bmatrix}a&b\\c&d\end{bmatrix}\in\GL^+(2,\R)$,
	then
	\[
	\sys(\sigma\cdot g) \leq |t_1|(1+|t_2|)\cdot\sys(\sigma),
	\]
	where $t_1=\frac{(a+d)+i(c-b)}{2}\neq0$ and $t_2=\frac{(a-d)+i(b+c)}{(a+d)+i(c-b)}$. Note that $|t_2|<1$.
	
	\end{enumerate}
\end{Lem}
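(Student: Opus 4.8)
The plan is to treat parts (1)--(3) as bookkeeping with the definitions and the structure of (semi)stable objects, and to reduce part (4) to a single linear-algebra estimate for $2\times 2$ real matrices, which is the only place where real content enters.

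For part (1) I would first use that every $\sigma$-semistable object $E\in\PP(\phi)$ has a Jordan--H\"older filtration in the finite-length abelian category $\PP(\phi)$, with stable factors $A_1,\dots,A_n$ all of phase $\phi$. Since $\ZZ(A_i)\in\R_{>0}\cdot e^{i\pi\phi}$ for every $i$, the $\ZZ(A_i)$ are positively proportional, so $|\ZZ(E)|=\sum_i|\ZZ(A_i)|\geq|\ZZ(A_j)|$ for each $j$; hence the minimum of $|\ZZ_\sigma(\cdot)|$ over stable objects is $\leq$ that over semistable objects, and the reverse inequality is immediate. Together with Remark \ref{Rmk:sys} (the systole is attained by a stable object) this gives the first equality. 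For the second, the Harder--Narasimhan filtration of an arbitrary $0\neq E$ writes $m_\sigma(E)=\sum_i|\ZZ(B_i)|$ with each $B_i$ semistable, so $m_\sigma(E)\geq\sys(\sigma)$, while $m_\sigma(E)=|\ZZ(E)|$ whenever $E$ is semistable; hence the minimum of masses over all nonzero $E$ equals $\sys(\sigma)$.

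For parts (2) and (3) the observation is that neither action changes the set of moduli $\{|\ZZ(E)| : E \text{ semistable}\}$. For $\Phi\in\Aut(\D)$ the semistable objects of $\Phi\cdot\sigma$ are exactly $\Phi(E)$ for $E$ semistable for $\sigma$, and $\ZZ_{\Phi\cdot\sigma}(\Phi E)=(\ZZ\circ[\Phi]^{-1})([\Phi]E)=\ZZ(E)$, so $\sys(\Phi\cdot\sigma)=\sys(\sigma)$. For $z=x+iy$, the relabelling $\PP'''(\phi)=\PP(\phi+x)$ leaves the semistable objects unchanged and the central charge is multiplied by $\exp(-i\pi z)$, of modulus $e^{\pi y}$; hence $\sys(\sigma\cdot z)=e^{\pi y}\sys(\sigma)=\exp(y\pi)\sys(\sigma)$.

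Part (4) is the main one, and the hard (or at least the only nontrivial) part is the matrix estimate. First I would note that $\sigma\cdot g=(T^{-1}\circ\ZZ,\PP'')$ with $\PP''(\phi)=\PP(f(\phi))$ and $f$ an increasing bijection of $\R$, so $(\sigma\cdot g)$-semistable objects are precisely $\sigma$-semistable objects and $\sys(\sigma\cdot g)=\min\{|T^{-1}\ZZ(E)| : E\ \sigma\text{-semistable}\}$, where $T^{-1}$ acts on $\C\cong\R^2$ via the central charge. It then suffices to bound the operator norm of $T^{-1}$ and evaluate at $\ZZ(E_0)$ for a semistable $E_0$ realizing $\sys(\sigma)$. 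Decomposing $T^{-1}=\begin{bmatrix}a&b\\c&d\end{bmatrix}$ into its $\C$-linear and $\C$-antilinear parts shows $z\mapsto T^{-1}z$ equals $z\mapsto t_1(z+t_2\bar z)$ with $t_1,t_2$ as in the statement, whence $|T^{-1}z|\leq|t_1|(1+|t_2|)|z|$ and the bound follows. Finally $|t_1|^2(1-|t_2|^2)=|t_1|^2-|t_1t_2|^2=ad-bc=\det(T^{-1})=1/\det T>0$ since $T\in\GL^+(2,\R)$, which yields both $t_1\neq0$ and $|t_2|<1$. I expect the only delicate point to be keeping the identification $\C\cong\R^2$ and the normalization of the $\widetilde{\GL^+(2,\R)}$-action consistent through the computation; there is no genuine obstacle here.
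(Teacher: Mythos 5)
Your proposal is correct and follows essentially the same route as the paper: the key step for part (4) is exactly the paper's identity $\ZZ_{\sigma\cdot g}(E)=t_1(\ZZ_\sigma(E)+t_2\overline{\ZZ_\sigma(E)})$, applied to a (semi)stable object realizing $\sys(\sigma)$, which remains semistable for $\sigma\cdot g$ since the $\widetilde{\GL^+(2,\R)}$-action only relabels phases. Your Jordan--H\"older/Harder--Narasimhan bookkeeping for part (1) and the determinant computation showing $t_1\neq 0$ and $|t_2|<1$ are correct fillings-in of details the paper leaves implicit.
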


\begin{proof}
The first three statements follow easily from the definition. To prove the last statement, we use a similar idea in the proof of \cite[Theorem 4.3]{Hai}. One can verify by direct computation that
\begin{equation}
\label{eq:GL2action}
\ZZ_{\sigma\cdot g}(E) = t_1 (\ZZ_\sigma(E) + t_2 \overline{\ZZ_\sigma(E)})
\end{equation}
holds for any $\sigma$ and $E$.
By Remark \ref{Rmk:sys}, there exists a $\sigma$-stable object $E$ such that $\sys(\sigma)=|\ZZ_\sigma(E)|$.
Since the actions by $\widetilde{\GL^+(2, \R)}$ do not change the set of stable objects, $E$ is also $\sigma\cdot g$-stable. Therefore,
\begin{align*}
|t_1|(1+|t_2|)\cdot\sys(\sigma) & = |t_1|(1+|t_2|)\cdot|\ZZ_\sigma(E)| \\
& \geq |\ZZ_{\sigma\cdot g}(E)| \geq \sys(\sigma\cdot g)
\end{align*}
\end{proof}

\begin{Prop}
The function 
\[
\sys\colon\Stab(\D)\longrightarrow \R_{>0}, \ \ \ \sigma \mapsto \sys(\sigma)
\]
is continuous on $\Stab(\D)$.
\end{Prop}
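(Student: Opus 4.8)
The plan is to use the generalized metric $d$ on $\Stab(\D)$ defined by Bridgeland, together with the characterization of $\sys(\sigma)$ as $\min\{m_\sigma(E) \colon 0\neq E\in\D\}$ from Lemma \ref{Lem:sys}(1). First I would observe that for any two stability conditions $\sigma_1,\sigma_2$ with $d(\sigma_1,\sigma_2)<\infty$, the definition of the generalized metric immediately gives
\[
e^{-d(\sigma_1,\sigma_2)}\leq \frac{m_{\sigma_2}(E)}{m_{\sigma_1}(E)} \leq e^{d(\sigma_1,\sigma_2)}
\]
for every nonzero $E\in\D$. Taking the infimum over all $0\neq E\in\D$ and invoking Lemma \ref{Lem:sys}(1), this yields
\[
e^{-d(\sigma_1,\sigma_2)}\leq \frac{\sys(\sigma_2)}{\sys(\sigma_1)} \leq e^{d(\sigma_1,\sigma_2)},
\]
equivalently $|\log\sys(\sigma_2)-\log\sys(\sigma_1)|\leq d(\sigma_1,\sigma_2)$.

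Next I would recall that by Bridgeland's work the topology on $\Stab(\D)$ is the one induced by $d$ (restricted to each subset where it is finite — in particular $d$ is finite on a neighborhood of any point, since the forgetful map $\sigma\mapsto\ZZ_\sigma$ is a local homeomorphism and nearby stability conditions have uniformly comparable masses). Hence the inequality above shows that $\log\circ\sys$ is $1$-Lipschitz with respect to $d$, so it is continuous; composing with $\exp$, the function $\sys$ itself is continuous on $\Stab(\D)$.

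The only point requiring a little care — and the one I would flag as the main (mild) obstacle — is the interchange of the infimum over $E$ with the logarithm, i.e.\ justifying that $\inf_E m_{\sigma_i}(E)$ is actually attained (so that the ratio bound passes cleanly to the systoles) and that $d(\sigma_1,\sigma_2)$ is finite for $\sigma_2$ in a neighborhood of $\sigma_1$. Both are handled by standard facts already available: Remark \ref{Rmk:sys} (via the support property) guarantees the infimum defining $\sys$ is a minimum attained by a stable object, and Bridgeland's deformation theorem guarantees $d$ is finite locally. With these in hand the argument is complete; no further estimates are needed.
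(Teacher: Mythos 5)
Your proof is correct and takes essentially the same approach as the paper: both use the mass term in Bridgeland's generalized metric together with Lemma \ref{Lem:sys}(1) and Remark \ref{Rmk:sys} to show that $\log\sys$ is $1$-Lipschitz with respect to $d$, hence continuous. The only cosmetic difference is that the paper evaluates at the object attaining the minimum for each $\sigma_i$ rather than passing the ratio bound through the infimum, but this is the same argument.
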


\begin{proof}
Let $\sigma_1, \sigma_2\in\Stab(\D)$ be two Bridgeland stability conditions with $d(\sigma_1, \sigma_2) < \epsilon$. By Remark \ref{Rmk:sys}, $\sys(\sigma_1)=m_{\sigma_1}(E)$ for some object $E\in\D$.
Thus
$$
\log\sys(\sigma_1) =
\log m_{\sigma_1}(E) >
\log m_{\sigma_2}(E) - \epsilon \geq
\log\sys(\sigma_2) - \epsilon.
$$
Similarly, we have $\log\sys(\sigma_2)>\log\sys(\sigma_1)-\epsilon$.
Hence $\log\sys(\sigma)$ is a continuous function on $\Stab(\D)$, and so is $\sys(\sigma)$.
\end{proof}

Let us consider some basic examples of categorical systoles of algebraic stability conditions.
In Section \ref{sec:ellcurve} and Section \ref{sec:K3}, we will be dealing with non-algebraic stability conditions on derived categories of coherent sheaves.

\begin{eg}[Derived category of representations of $A_2$-quiver] \label{eg:A2}
Let $Q$ be the $A_2$-quiver $(\boldsymbol\cdot \ra \boldsymbol\cdot)$ and
$\D=\D^b(\mathrm{Rep}(Q))$ be the bounded derived category of the category of representations of $Q$.
Let $E_1$ and $E_2$ be the simple objects in $\mathrm{Rep}(Q)$
with dimension vectors $\underline\dim(E_1)=(1, 0)$ and
$\underline\dim(E_2)=(0, 1)$.
There is one more indecomposable object $E_3$ that fits into the exact sequence
$$
0 \ra E_2 \ra E_3 \ra E_1 \ra 0.
$$

Let $\sigma=(\ZZ, \PP)$ be a stability condition on $\D$ with $z_1:=\ZZ(E_1)$ and
$z_2:=\ZZ(E_2)$.
Suppose that $\PP(0,1] = \mathrm{Rep}(Q)$. Then

\begin{itemize}
\item When $\arg(z_1) < \arg(z_2)$, the only $\sigma$-stable objects are $E_1$ and $E_2$ up to shiftings. Thus $\sys(\sigma)=\min\{|z_1|, |z_2|\}$.
\item When $\arg(z_1) > \arg(z_2)$, the only $\sigma$-stable objects are $E_1$, $E_2$ and $E_3$ up to shiftings. Thus $\sys(\sigma)=\min\{|z_1|, |z_2|, |z_1+z_2|\}$.
\end{itemize}

Therefore, in order to compute the categorical systoles of stability conditions in different chambers,
one needs to compute the central charges of different sets of dimensional vectors.
Note that this is not the case for the derived categories of elliptic curves and K3 surfaces,
where the categorical systole of any stability condition is the minimum of absolute values of central charges of the \emph{same} set of classes in $\NN(\D)$ (see Section \ref{sec:ellcurve} and Proposition~\ref{Prop:key}).
\end{eg}

\begin{eg}[3-Calabi--Yau categories arising from marked Riemann surfaces]
There is a 3-Calabi--Yau category $\D_{S,M}$ (up to equivalence) associated to each marked Riemann surface $(S,M)$, see for instance \cite[Section 9]{BS}. The main theorem in \cite{BS} states that
\begin{equation}
\label{eq:BS}
\Stab(\D_{S,M})/\Aut(\D_{S,M}) \cong \Quad(S,M),
\end{equation}
where $\Quad(S,M)$ is the moduli space of certain meromorphic quadratic differentials on $S$ with simple zeros.
By Lemma \ref{Lem:sys} (2), the function $\sys$ on $\Stab(\D)$ given by categorical systole descends to a function on the quotient $\Stab(\D)/\Aut(\D)$. Under the equivalence (\ref{eq:BS}), the categorical systole of a stability condition on $\D_{S,M}$ is given by the minimum among the period integrals $\int \sqrt\phi$ along saddle connections of the corresponding quadratic differential.
\end{eg}

%%%%% Cat vol %%%%%
\subsection{Categorical volumes}
We recall the notion of \emph{categorical volumes} of Bridgeland stability conditions introduced in \cite{FKY}. It is the categorical analogue of the holomorphic volume $\Big|\int_Y\Omega\wedge\overline\Omega\Big|$ of a compact Calabi--Yau manifold $Y$ with holomorphic top form $\Omega$.

Let $Y$ be a compact Calabi--Yau manifold of dimension $n$, and $\{A_i\}$ be a basis of the torsion-free part of
$H_n(X,\Z)$. Then one can rewrite the holomorphic volume of $Y$ as
$$
\Big|\int_Y\Omega\wedge\overline\Omega\Big|=\Big|\sum_{i,j}\gamma^{i,j}\int_{A_i}\Omega\int_{A_j}\overline\Omega\Big|,
$$
where $(\gamma^{i,j})=(A_i\cdot A_j)^{-1}$ is the inverse matrix of the intersection pairings.

Recall that in Conjecture \ref{conj:BridgelandJoyce}, the period integral $\int\Omega$ should give the central charge of a Bridgeland stability condition on the derived Fukaya category $\D^\pi\Fuk(Y)$ that is associated to the holomorphic top form $\Omega$. This motivates the following definition.

\begin{Def}[\cite{FKY}]\label{vol}
Let $\{E_i\}$ be a basis of the numerical Grothendieck group $\NN(\D)$
and let $\sigma=(\ZZ,\PP)$ be a Bridgeland stability condition on $\D$.
Its \emph{volume} is defined to be
$$
\vol(\sigma)\coloneqq\Big|\sum_{i,j} \chi^{i,j} \ZZ(E_i) \overline{\ZZ(E_j)}\Big|,
$$
where $(\chi^{i,j})=(\chi(E_i,E_j))^{-1}$ is the inverse matrix of the Euler pairings.
\end{Def}

One can easily check that the above definition is independent of the choice of the basis $\{E_i\}$: Suppose $\{F_j\}$ is another basis of $\NN(\D)$, where $A$ is the unimodular matrix that relates the bases $\{E_i\}$ and $\{F_j\}$, i.e.~$F_j=\sum_kA_{jk}E_k$. Denote the Gram matrices $(\chi(E_i,E_j))$ and $(\chi(F_i,F_j))$ by $\chi_E$ and $\chi_F$, respectively. Then we have
$\chi_F=A\chi_EA^T$, and therefore
\begin{align*}
\sum_{i,j}(\chi_F)^{-1}_{ij}\ZZ(F_i)\overline{\ZZ(F_j)} &
= \sum_{i,j}(A\chi_EA^T)^{-1}_{ij}\ZZ(F_i)\overline{\ZZ(F_j)} \\
& =\sum_{i,j,k,\ell} (A^T)^{-1}_{ik}(\chi_E)^{-1}_{k\ell}A^{-1}_{\ell j}\ZZ(F_i)\overline{\ZZ(F_j)} \\
&=\sum_{i,j,k,\ell,m,n} A^{-1}_{ki}(\chi_E)^{-1}_{k\ell}A^{-1}_{\ell j}A_{im}A_{jn}\ZZ(E_m)\overline{\ZZ(E_n)}\\
&=\sum_{k,\ell,m,n}\delta_{km}\delta_{\ell n}(\chi_E)^{-1}_{k\ell}\ZZ(E_m)\overline{\ZZ(E_n)}\\
&=\sum_{m,n}(\chi_E)^{-1}_{mn}\ZZ(E_m)\overline{\ZZ(E_n)}.
\end{align*}

It is important to note that unlike the categorical systoles, the categorical volume of a stability condition $\sigma=(\ZZ, \PP)$ depends only on its central charge. The proof of the following lemma is straightforward.

\begin{Lem}\label{Lem:vol}
Let $\sigma$ be a Bridgeland stability condition on $\D$. Then
	\begin{enumerate}[label=(\alph*)]
	\item For any autoequivalence $\Phi\in\Aut(\D)$, $\vol(\Phi\cdot\sigma) = \vol(\sigma)$.
	\item For any complex number $z=x+iy\in\C$, $\vol(\sigma\cdot z) = \exp(2y\pi)\cdot \vol(\sigma)$.
	\end{enumerate}
\end{Lem}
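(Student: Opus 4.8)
The plan is to use the fact, already recorded in the text, that $\vol(\sigma)$ is independent of the chosen basis of $\NN(\D)$ and depends only on the central charge $\ZZ$. Both statements then reduce to tracking how $\ZZ$ (and the Euler pairing) transform under the two group actions recalled in Section~\ref{sec:sysvol}.

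For part (1), recall that $\Phi\cdot\sigma=(\ZZ\circ[\Phi]^{-1},\PP')$, so the central charge of $\Phi\cdot\sigma$ is $\ZZ'=\ZZ\circ[\Phi]^{-1}$. First I would observe that the induced automorphism $[\Phi]$ of $\NN(\D)$ is an isometry for the Euler pairing, i.e. $\chi([\Phi]E,[\Phi]F)=\chi(E,F)$ for all $E,F$; this is immediate from the natural isomorphisms $\Hom_\D(\Phi E,\Phi F[i])\cong\Hom_\D(E,F[i])$. Hence if $\{E_i\}$ is a basis of $\NN(\D)$, so is $\{[\Phi]E_i\}$, and the Gram matrices $(\chi([\Phi]E_i,[\Phi]E_j))$ and $(\chi(E_i,E_j))$ coincide, so their inverses $(\chi^{i,j})$ coincide as well. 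Computing $\vol(\Phi\cdot\sigma)$ with respect to the basis $\{[\Phi]E_i\}$ and using $\ZZ'([\Phi]E_i)=\ZZ(E_i)$, the defining sum becomes verbatim the sum defining $\vol(\sigma)$ with respect to $\{E_i\}$. By basis-independence, $\vol(\Phi\cdot\sigma)=\vol(\sigma)$.

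For part (2), recall $\sigma\cdot z=(\exp(-i\pi z)\ZZ,\PP''')$, so the central charge scales by the constant $\lambda:=\exp(-i\pi z)$, while the Euler pairing is untouched. Substituting $\ZZ'=\lambda\ZZ$ into Definition~\ref{vol}, every term of the sum acquires a factor $\lambda\overline\lambda=|\lambda|^2$, which is a positive real number and may be pulled out of the absolute value. It then remains to compute, writing $z=x+iy$, that $|\lambda|^2=|\exp(-i\pi z)|^2=|\exp(\pi y)\exp(-i\pi x)|^2=\exp(2\pi y)$, giving $\vol(\sigma\cdot z)=\exp(2\pi y)\cdot\vol(\sigma)$. (This is consistent with Lemma~\ref{Lem:sys}(3): $\vol$ is quadratic in $\ZZ$ whereas $\sys$ is homogeneous of degree one.) There is essentially no obstacle; the only point needing a line of justification is the isometry property of $[\Phi]$ on $(\NN(\D),\chi)$, and everything else is bookkeeping with the explicit action formulas.
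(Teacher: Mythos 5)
Your proposal is correct and is exactly the verification the paper has in mind when it declares the lemma ``straightforward'' and omits the proof: part (1) follows from basis-independence of $\vol$ together with the fact that $[\Phi]$ preserves the Euler pairing, and part (2) from the computation $|\exp(-i\pi z)|^2=\exp(2\pi y)$ pulling out of the sum. No issues.
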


Now we recall some computations of categorical volumes in \cite{FKY} which will be used in the later sections. 

\begin{eg}[Elliptic curves] \label{eg:ell}
Let $\D=\D^b\Coh(E)$ be the derived category of coherent sheaves on an elliptic curve $E$.
Let $\sigma=(\ZZ,\PP)$ be a stability condition on $\D$ with central charge
$$
\ZZ_\sigma(F) = - \mathrm{deg}(F) + (\beta + i \omega) \cdot \mathrm{rk}(F),
$$
where $\beta\in\R$ and $\omega>0$.
Choose $\{\O_x, \O_E\}$ as a basis of the numerical Grothendieck group $\NN(\D)$, where $\O_x$ is a skyscraper sheaf and $\O_E$ is the structure sheaf.
Then the categorical volume of $\sigma$ is
\begin{align}
\vol(\sigma) & = |\ZZ_\sigma(\O_x)\overline{\ZZ_\sigma(\O_E)}-\ZZ_\sigma(\O_E)\overline{\ZZ_\sigma(\O_x)}| \notag \\
 & =  2\omega >0. \notag 
\end{align}

\begin{Rmk}
Let $C$ be a curve of genus $g\geq1$. Then $\Stab(\D^b\Coh(C))=\C\times\H$, and the central charge of any stability condition is of the form
$$
\ZZ_\sigma(F) = - \mathrm{deg}(F) + (\beta + i \omega) \cdot \mathrm{rk}(F) \ \ \  (\beta\in\R\text{  and } \omega>0)
$$
up to the free $\C$-action (see \cite{Bri,Macri}). However, the categorical volume $\vol(\sigma)$ is not given by $2\omega$ unless $g=1$, due to the fact that the Euler pairing on $\NN(\D)$ is not (anti)symmetric if $C$ is not Calabi--Yau. In fact, choose $\{E_1=\O,E_2=\O(1)\}$ as a basis of $\NN(\D)$, then the Gram matrix of Euler pairings is
$
A\coloneqq\begin{bmatrix}
1-g&2-g\\
-g&1-g
\end{bmatrix}
$
by Riemann--Roch. Therefore we have
\begin{align*}
\vol(\sigma) &= \Big|\sum_{1\leq i,j\leq 2} (A^{-1})_{ij}\ZZ(E_i)\overline{\ZZ(E_j)}\Big|\\
&=\Big|(1-g)|\beta+i\omega|^2+(g-2)(\beta+i\omega)\overline{\beta+i\omega-1}\\
&\quad\quad + g(\beta+i\omega-1)\overline{\beta+i\omega}+(1-g)|\beta+i\omega-1|^2\Big|\\
&=\left| 2i\omega+(1-g)\right|\\
&=\sqrt{(g-1)^2+(2\omega)^2}>0.
\end{align*}

On the other hand, there exist stability conditions on $\D^b\Coh(\P^1)$ that are not of this form (non-geometric). It is possible for such stability conditions to have zero categorical volume. For instance, by \cite[Proposition~3.3]{Okada}, there exist stability conditions $\sigma\in\Stab(\D^b\Coh(\P^1))$ such that $\ZZ_\sigma(\O)=\ZZ_\sigma(\O(1))=z\in\C\backslash\{0\}$. Then by the same computation as above, we have
\begin{align*}
\vol(\sigma)=\left| |z|^2-2|z|^2+|z|^2 \right|=0.
\end{align*}
\end{Rmk}

\end{eg}

\begin{eg}[K3 surfaces] \label{eg:K3}
Let $\D=\D^b\Coh(X)$ be the derived category of coherent sheaves on a K3 surface $X$.
Let $\sigma=(\ZZ,\PP)$ be a stability condition on $\D$ with central charge
$$
\ZZ_\sigma(v) = (\exp(\beta+i\omega), v),
$$
where $(\cdot,\cdot)$ is the Mukai pairing on the numerical Grothendieck group $\NN(\D)$,
and $\beta,\omega\in\NS(X)\otimes\R$ with $\omega^2>0$
(c.f.~Section 4).
Let $\{ v_i \}$ be a basis of $\NN(\D)$.
Then the categorical volume of $\sigma$ is
\begin{align*}
\vol(\sigma) & = \Big|\sum_{i,j} \chi^{i,j} \ZZ_\sigma(v_i) \overline{\ZZ_\sigma(v_j)}\Big| \notag \\
 & = \Big|\sum_{i,j} \chi^{i,j} \cdot  (\exp(\beta+i\omega), v_i) \cdot  (\exp(\beta-i\omega), v_j)\Big|  \notag \\
  & = \Big|\sum_{i,j}  (\exp(\beta+i\omega), v_i) \cdot \chi^{i,j} \cdot  (v_j, \exp(\beta-i\omega))\Big|  \ \ (\mathrm{Serre\ duality}) \notag \\
  & = \Big| \Big(\exp(\beta+i\omega), \exp(\beta-i\omega) \Big) \Big|
   		\ \ (\mathrm{compatibility\ btw\ Euler/Mukai\ pairings})\notag \\
  & = 2 \omega^2 > 0. \notag
\end{align*}

Note that the same computation shows that $\sum_{i,j} \chi^{i,j} \ZZ_\sigma(v_i) \ZZ_\sigma(v_j)=0$.

For any $g=(T,f)\in\widetilde{\GL^+(2, \R)}$, one can compute the categorical volume of the stability condition $\sigma\cdot g$ following the same idea in the proof of Lemma \ref{Lem:sys} (4).
Write $T^{-1}=\begin{bmatrix}a&b\\c&d\end{bmatrix}\in\GL^+(2,\R)$.
Define $t_1=\frac{(a+d)+i(c-b)}{2}\neq0$ and $t_2=\frac{(a-d)+i(b+c)}{(a+d)+i(c-b)}$.
By Equation (\ref{eq:GL2action}), we have
\begin{align*}
\vol(\sigma\cdot g) & = \Big|\sum_{i,j} \chi^{i,j} \ZZ_{\sigma\cdot g}(v_i) \overline{\ZZ_{\sigma\cdot g}(v_j)}\Big| \notag \\
&=|t_1|^2 \Big|\sum_{i,j} \chi^{i,j} (\ZZ_\sigma(v_i)+t_2\overline{\ZZ_\sigma(v_i)})
(\overline{\ZZ_\sigma(v_j)}+\overline{t_2}\ZZ_\sigma(v_j)) \Big| \notag \\
&=|t_1|^2(1+|t_2|^2)\cdot\vol(\sigma).
\end{align*}
We use $\chi^{i,j}=\chi^{j,i}$ in the last step, which follows from the fact that $\D$ is a $2$-Calabi--Yau category therefore its Euler pairing is symmetric.
\end{eg}

We should note that although the categorical volume can be defined for stability conditions on any triangulated category $\D$, its geometric meaning is not clear unless $\D$ comes from compact Calabi--Yau geometry. Below is an example of a Calabi--Yau triangulated category for which the categorical volume vanishes for some stability conditions (on the wall of marginal stability conditions).

\begin{eg}[$3$-Calabi--Yau category of the $A_2$-quiver]
Let $\D$ be the $3$-Calabi--Yau category constructed from the Ginzburg 3-Calabi--Yau dg-algebra associated to the $A_2$-quiver \cite{Gin, Kel}.
%Let $Q$ be the $A_2$-quiver $(\boldsymbol\cdot \ra \boldsymbol\cdot)$
%and $\Gamma Q$ be the Ginzburg Calabi--Yau--3 dg-algebra associated with $Q$ \cite{Gin, Kel}.
%Let $\D(\Gamma Q)$ be the derived category of dg-modules over $\Gamma Q$
%and $\D=\D_{\mathrm{fd}}(\Gamma Q)$ be the full subcategory of $\D(\Gamma Q)$
%consisting of dg-modules with finite dimensional total cohomology.
%By Keller \cite[Theorem 6.3]{Kel}, the category $\D$ is a Calabi--Yau--3 category.
%there is a natural isomorphism 
%$
%\Hom(E, F) \cong \Hom (F, E[3])^*
%$
%for any $E, F\in\D$.
%By Smith \cite{Smi}, there is an embedding of $\D$ into the Fukaya category of certain quasi-projective Calabi--Yau threefold.
The numerical Grothendieck group $\NN(\D)$ is generated by two spherical objects $S_1, S_2$.
%each associated with a vertex in the $A_2$-quiver.
%The spherical objects satisfy:
%\begin{itemize}
%\item $\Hom^*_\D(S_1, S_1)=\Hom^*_\D(S_2, S_2)=\C\oplus\C[-3]$.
%\item $\Hom^*_\D(S_1, S_2)=\C[-1]$, $\Hom^*_\D(S_2, S_1)=\C[-2]$.
%\end{itemize}

Let $\sigma=(\ZZ, \PP)$ be a stability condition on $\D$ with $z_1=\ZZ(S_1)$ and $z_2=\ZZ(S_2)$.
Then its categorical volume is
$$
\vol(\sigma)  = |z_1 \overline{z_2} - z_2 \overline{z_1} |
= 2 | \mathrm{Im}(z_1 \overline{z_2}) |,
$$
which vanishes if $z_1 \overline{z_2} \in \R$.
This can happen if $z_1$ and $z_2$ are of the same phase, i.e.,
the stability condition $\sigma$ sits on a \emph{wall} in $\Stab(\D)$.
\end{eg}

%\begin{eg}[Derived category of Kronecker quiver]
%
%Let $K(l)$ be the Kronecker quiver 
%with $l$ parallel arrows between the two vertices.
%Let $\D=\D^b(\mathrm{Rep}(K(l)))$
%be the bounded derived category of the category of representations of $K(l)$.
%
%Let $E_1$ and $E_2$ be the simple objects in $\mathrm{Rep}(K(l))$
%with dimension vectors $\underline\dim(E_1)=(1, 0)$ and
%$\underline\dim(E_2)=(0, 1)$.
%Then
%\begin{itemize}
%\item $\Hom^*_\D(E_1, E_1)=\Hom^*_\D(E_2, E_2)=\C$.
%\item $\Hom^*_\D(E_1, E_2)=\C^l$, $\Hom^*_\D(E_2, E_1)=0$.
%\end{itemize}
%
%Let $\sigma=(\ZZ, \PP)$ be a stability condition on $\D$ with $z_1:=\ZZ(E_1)$ and
%$z_2:=\ZZ(E_2)$.
%Then its categorical volume is
%$$
%\vol(\sigma) = \Big| |z_1|^2 + |z_2|^2 - l z_1 \overline{z_2} \Big|.
%$$
%For $l\geq2$,
%the categorical volume $\vol(\sigma)$ vanishes if
%$z_2=\frac{l\pm\sqrt{l^2-4}}{2} z_1$,
%which again happens when $\sigma$ sits on a wall of stability conditions.
%Note that this example includes the case of derived category of coherent sheaves on projective line
%since $\D^b(\P^1) \cong \D^b(K(2))$.
%
%\end{eg}

%%%%% Elliptic curves %%%%%
\section{Systolic inequality for elliptic curves: a sanity check}
\label{sec:ellcurve}

In this section, we give an affirmative answer to Question \ref{Q:B} in the case of elliptic curves. This provides a sanity check for our definitions of categorical systole and categorical volume.

\begin{Thm}\label{Thm:Ell}
Let $\D=\D^b\Coh(E)$ be the derived category of an elliptic curve $E$.
Then
$$
\sys(\sigma)^2 \leq  \frac{1}{\sqrt3} \cdot \vol(\sigma)
$$
holds for any $\sigma\in\Stab(\D)$.
\end{Thm}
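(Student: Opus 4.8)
The plan is to recast both invariants in terms of the rank-two lattice $L_\sigma \coloneqq \ZZ_\sigma(\NN(\D)) \subset \C$ and then apply the two-dimensional case of Hermite's inequality.

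First I would unwind $\vol$. Since $\omega_E\cong\O_E$, Serre duality makes the Euler form on $\NN(\D^b\Coh(E))\cong\Z^2$ (coordinates $(\mathrm{rk},\deg)$) the standard unimodular symplectic form; for instance $\chi(\O_E,\O_x)=1$. Evaluating the formula of Definition \ref{vol} in the basis $\{[\O_E],[\O_x]\}$, exactly as in Example \ref{eg:ell}, yields $\vol(\sigma)=2\bigl|\mathrm{Im}\bigl(\ZZ_\sigma[\O_E]\,\overline{\ZZ_\sigma[\O_x]}\bigr)\bigr|=2\,\mathrm{covol}(L_\sigma)$, the area of a fundamental domain of $L_\sigma$. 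In particular $L_\sigma$ has full rank, since otherwise $\vol(\sigma)=0$, contradicting $\sys(\sigma)>0$ from Remark \ref{Rmk:sys}.

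The geometric heart of the argument is the identity $\sys(\sigma)=\lambda_1(L_\sigma)$, the Euclidean length of a shortest nonzero vector of $L_\sigma$. One inequality is free: $\ZZ_\sigma(E)\in L_\sigma\setminus\{0\}$ for every nonzero $E$, so $\sys(\sigma)\ge\lambda_1(L_\sigma)$. For the reverse I would show that \emph{every} class $v\in\NN(\D)\setminus\{0\}$ is represented by a $\sigma$-semistable object. Using the known structure of $\Stab(\D^b\Coh(E))$ (Bridgeland), any stability condition can be moved by an autoequivalence together with the $\widetilde{\GL^+(2,\R)}$-action to a geometric one, with central charge $\ZZ(F)=-\deg F+(\beta+i\omega)\mathrm{rk}\,F$, $\omega>0$, whose semistable objects are precisely the slope-semistable sheaves together with their shifts. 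Both group actions carry semistable objects to semistable objects and act on $\NN(\D)$ by automorphisms, so the property ``every nonzero class is represented by a semistable object'' is invariant under this reduction, and for a geometric stability condition it follows from Atiyah's classification of sheaves on an elliptic curve (semistable bundles of every rank $\ge 1$ and every degree, torsion sheaves of every length, and shifts and direct sums for the remaining classes). Hence $\sys(\sigma)=\min_{0\ne v\in\NN(\D)}|\ZZ_\sigma(v)|=\lambda_1(L_\sigma)$.

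Finally I would invoke Hermite's inequality in dimension two: every rank-two lattice $\Lambda\subset\R^2$ satisfies $\lambda_1(\Lambda)^2\le\frac{2}{\sqrt3}\,\mathrm{covol}(\Lambda)$, with equality exactly for the hexagonal lattice. Applying this to $\Lambda=L_\sigma$ gives
\[
\sys(\sigma)^2=\lambda_1(L_\sigma)^2\le\frac{2}{\sqrt3}\,\mathrm{covol}(L_\sigma)=\frac{1}{\sqrt3}\,\vol(\sigma).
\]
The main obstacle is the identity $\sys(\sigma)=\lambda_1(L_\sigma)$, i.e. that no class whatsoever fails to be represented by a semistable object; this is exactly where the classification of $\Stab(\D^b\Coh(E))$ and of sheaves on $E$ enter. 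By contrast, the identity $\vol(\sigma)=2\,\mathrm{covol}(L_\sigma)$ is elementary linear algebra and the constant $2/\sqrt3$ is classical, so the extremal case corresponds precisely to Loewner's optimal (hexagonal) torus, showing that the constant $1/\sqrt3$ is sharp.
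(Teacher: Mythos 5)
Your proof is correct and follows essentially the same route as the paper: both identify $\sys(\sigma)$ with the first minimum of the image lattice of $\ZZ_\sigma$ (via Bridgeland's description of $\Stab(\D^b\Coh(E))$ as a single free transitive $\widetilde{\GL^+(2,\R)}$-orbit together with Atiyah's classification of semistable sheaves) and $\vol(\sigma)$ with twice its covolume, then invoke the planar Hermite constant $\gamma_2=2/\sqrt3$; the paper merely normalizes by the $\C$-action first and computes with the explicit family $\ZZ_\tau(F)=-\deg F+\tau\cdot\mathrm{rk}\,F$. One small blemish: your one-line justification that $L_\sigma$ has full rank (``otherwise $\vol(\sigma)=0$, contradicting $\sys(\sigma)>0$'') is circular as stated, but the fact follows immediately from the freeness and transitivity of the $\widetilde{\GL^+(2,\R)}$-action that you already invoke.
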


One can consider this inequality as the \emph{mirror} of Loewner's torus systolic inequality in the introduction. We refer to Section \ref{sec:future} for more discussions related to mirror symmetry.

%and the double quotient
%$$
%\Aut(\D)\backslash\Stab(\D)/\C \cong \mathrm{PSL}(2,\Z)\backslash\H
%$$
%is indeed the K\"ahler moduli space of elliptic curve.
%Thus we should take $\Stab^*(\D)=\Stab(\D)$ in Question \ref{Q:B} to be the whole space of stability conditions.

\begin{proof}
By \cite[Theorem 9.1]{Bri}, the $\widetilde{\mathrm{GL}^+(2;\R)}$-action on the space of Bridgeland stability conditions $\Stab(\D)$ of an elliptic curve is free and transitive. Therefore
\[
\Stab(\D)\cong\widetilde{\mathrm{GL}^+(2;\R)}\cong\C\times\H.
\]

By Lemma \ref{Lem:sys} and Lemma \ref{Lem:vol}, the systolic ratio
\[
\frac{\sys(\sigma)^2}{\vol(\sigma)}
\]
is invariant under the free $\C$-action on the space of stability conditions.
Hence we only need to compute the ratio on the quotient space $\Stab(\D)/\C\cong\H$.
The quotient space $\Stab(\D)/\C\cong\H$ can be parametrized by the \emph{normalized stability conditions} as follows.
Let $\tau = \beta + i\omega\in\H$ where $\beta\in\R$ and $\omega>0$.
The associated normalized stability condition $\sigma_\tau$ is given by:
\begin{itemize}
\item Central charge: $\ZZ_\tau(F) = - \mathrm{deg}(F) + \tau\cdot \mathrm{rk}(F)$.
\item For $0<\phi\leq1$, the (semi)stable objects $\PP_\tau(\phi)$ are
	the slope-(semi)stable coherent sheaves whose central charges lie in the ray $\R_{>0}\cdot e^{i\pi\phi}$.
\item For other $\phi\in\R$, define $\PP_\tau(\phi)$ by the property
	$\PP_\tau(\phi+1)=\PP_\tau(\phi)[1]$.
\end{itemize}

Note that there is no wall-crossing phenomenon in the elliptic curve case, i.e., the set of all Bridgeland (semi)stable objects is the same for any stability condition. This makes the computation of categorical systole easier.

To compute the systole of $\sigma_\tau$, by Lemma \ref{Lem:sys} (1), we need to know the central charges of all the stable objects of $\sigma_\tau$,
which are the slope-stable coherent sheaves.
Recall that if $F$ is a slope-stable coherent sheaf on $E$, then it is either a vector bundle or a torsion sheaf.
The slope-stable vector bundles on an elliptic curve $E$ are well-understood,
see for instance \cite{Atiyah, Pol}.
In particular, we have the following facts:
\begin{itemize}
\item Let $F$ be an indecomposable vector bundle of rank $r$ and degree $d$ on an
elliptic curve $E$. Then $F$ is slope-stable if and only if $d$ and $r$ are relatively
prime.
\item Fix a point $x\in E$. For every rational number $\mu=\frac{d}{r}$, where $r>0$ and $(d,r)=1$, there exists a unique slope-stable vector bundle $V_\mu$ of rank $r$ and $\mathrm{det}(V_\mu)\cong\O_E(dx)$.
\end{itemize}

Hence the categorical systole is
\begin{align*}
\sys(\sigma_\tau) &= \min \Big\{ 1, |-d+\tau r| \colon (d,r)=1 \text{ and } r>0 \Big\} \\
&= \Big\{ |-d+\tau r| \colon (d,r)\in\Z^2\bs\{(0,0)\} \Big\} \\
&=\lambda_1(L_\tau),
\end{align*}
where $\lambda_1(L_\tau)$ denotes the least length of a nonzero element in the lattice $L_\tau = \langle 1, \tau \rangle$.

On the other hand, the categorical volume of $\sigma_\tau$ has been computed in Example \ref{eg:ell}, which is equals to $2\omega$.
Thus
$$
\sup_{\tau\in\H}\frac{\sys(\sigma_\tau)^2}{\vol(\sigma_\tau)}
= \frac{1}{2}\cdot\sup_{\tau\in\H}\frac{\lambda_1(L_\lambda)^2}{\omega}.
$$

Note that $\omega$ is the area of the parallelogram spanned by $1$ and $\tau$.
Hence the quantity $\sup_{\tau\in\H}\lambda_1(L_\lambda)^2/\omega$
is the so-called \emph{Hermite constant} $\gamma_2$ of lattices in $\R^2$.
It is classically known that the Hermite constant is given by $\gamma_2=\frac{2}{\sqrt3}$ (see for instance \cite{Cas}).
This concludes the proof.
\end{proof}

%%%%% K3 surfaces %%%%%
\section{Systolic inequalities for K3 surfaces}
\label{sec:K3}

This section is devoted to prove the following main results of the present article.

\begin{Thm}
\label{mainThm1}
Let $X$ be a complex projective K3 surface. Then
\[
\sys(\sigma)^2\leq C\cdot\vol(\sigma)
\]
for any $\sigma\in\Stab^\dagger(\D^b\Coh(X))$, where
\[
C = \frac{((\rho+2)!)^2|\disc\ \NS(X)|}{2^\rho}+4.
\]
Here $\rho$ and $\disc$ denote the rank and the discriminant of the N\'eron--Severi group $\NS(X)$, respectively.
\end{Thm}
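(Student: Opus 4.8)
The plan is to exploit the two group actions on $\Stab(\D^b\Coh(X))$ to reduce to a single normal form of stability condition, to translate the desired inequality into a statement about short vectors in the algebraic Mukai lattice, and then to settle that statement by a geometry-of-numbers argument. First I would reduce to geometric stability conditions: by Lemma~\ref{Lem:sys}(2) and Lemma~\ref{Lem:vol}(1) the systolic ratio $\sys(\sigma)^2/\vol(\sigma)$ is invariant under $\Aut(\D^b\Coh(X))$, and by Lemma~\ref{Lem:sys}(4) together with the formula for $\vol(\sigma\cdot g)$ computed in Example~\ref{eg:K3} it is multiplied by at most $(1+|t_2|)^2/(1+|t_2|^2)\le 2$ under the $\widetilde{\GL^+(2,\R)}$-action. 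Since, by Bridgeland's description of the distinguished component $\Stab^\dagger(\D^b\Coh(X))$, every $\sigma\in\Stab^\dagger$ is carried by $\Aut$ and $\widetilde{\GL^+(2,\R)}$ to a stability condition with central charge $\ZZ_\sigma(v)=(\exp(\beta+i\omega),v)$ for some $\beta,\omega\in\NS(X)\otimes\R$ with $\omega^2>0$, it suffices to prove $\sys(\sigma)^2\le(C/2)\cdot\vol(\sigma)$ for such $\sigma$; and for these $\vol(\sigma)=2\omega^2$ by Example~\ref{eg:K3}.

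Next I would set up the lattice dictionary. Recall that $\NN(\D^b\Coh(X))\cong U\oplus\NS(X)$ with the Mukai pairing: it has rank $\rho+2$, signature $(2,\rho)$, and $|\disc\ \NN|=|\disc\ \NS(X)|$. Writing $\Omega=\exp(\beta+i\omega)$ one has $\Omega^2=0$ and $(\Omega,\overline\Omega)=2\omega^2>0$, so the real span $P:=\langle\mathrm{Re}\,\Omega,\mathrm{Im}\,\Omega\rangle$ is a positive-definite $2$-plane with $\mathrm{Re}\,\Omega\perp\mathrm{Im}\,\Omega$ and $(\mathrm{Re}\,\Omega)^2=(\mathrm{Im}\,\Omega)^2=\omega^2$, while $P^\perp$ is negative-definite of rank $\rho$. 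For the Mukai-orthogonal decomposition $v=v_P+v_\perp$ of a class $v$ one checks directly that $|\ZZ_\sigma(v)|^2=\omega^2\cdot v_P^2$ and $v^2=v_P^2+v_\perp^2$ with $v_\perp^2\le 0$, so that $|\ZZ_\sigma(v)|^2/\vol(\sigma)=v_P^2/2$. The key geometric input is that for any $\sigma\in\Stab^\dagger$ every nonzero class $v\in\NN$ with $v^2\ge -2$ is represented by a $\sigma$-semistable object: for primitive $v$ this is the non-emptiness of Bridgeland moduli spaces on K3 surfaces (Bayer--Macr\`i, Toda, Yoshioka), and for $v=mv_0$ with $v_0$ primitive and $v_0^2\ge 0$ one takes the $m$-fold direct sum of a $\sigma$-semistable object of class $v_0$ (the remaining case $v_0^2=-2$ forces $m=1$, since $v^2=m^2v_0^2$). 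Hence $\sys(\sigma)\le|\ZZ_\sigma(v)|$ for every such $v$, and the whole problem reduces to the following: \emph{find $v\in\NN\setminus\{0\}$ with $v^2\ge -2$ and $v_P^2$ as small as possible.}

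The heart of the argument is this last step, carried out by geometry of numbers with a rescaling trick. For a parameter $t>0$, consider on the lattice $\NN$ the real positive-definite quadratic form $Q_t(v):=v_P^2-t\,v_\perp^2$. Computing in a Mukai-orthogonal $\R$-basis of $\NN\otimes\R$ adapted to $P\oplus P^\perp$ --- in which the Mukai form reads $\mathrm{diag}(1,1,-1,\dots,-1)$ and $Q_t$ reads $\mathrm{diag}(1,1,t,\dots,t)$ --- one finds that the Gram determinant of $Q_t$ with respect to $\NN$ equals $t^\rho\,|\disc\ \NS(X)|$. By Minkowski's first theorem (Hermite's inequality) there is a nonzero $v\in\NN$ with $Q_t(v)\le\gamma_{\rho+2}\bigl(t^\rho|\disc\ \NS(X)|\bigr)^{1/(\rho+2)}$, where $\gamma_{\rho+2}$ is the Hermite constant; this bounds $v_P^2$ and, with an extra factor $t^{-1}$, also bounds $|v_\perp^2|$. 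Choosing $t$ large enough (of order a power of $|\disc\ \NS(X)|$) so that $|v_\perp^2|\le 2$ forces $v^2=v_P^2+v_\perp^2\ge -|v_\perp^2|\ge -2$, so this $v$ is of the allowed type; substituting that value of $t$ then bounds $v_P^2$ by an explicit constant depending only on $\rho$ and $|\disc\ \NS(X)|$. Carrying out this bookkeeping with a suitable choice of $t$ and a standard estimate for the Hermite constant, and combining it with the factor $2$ lost in the reduction step, produces the constant $C$ in the statement.

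The main obstacle is precisely the tension in that last step between the two constraints. Minkowski's theorem delivers a short vector for the \emph{positive-definite} form $Q_t$ essentially for free, but what one actually needs is a vector satisfying the \emph{indefinite} inequality $v^2\ge -2$ --- this is exactly the condition guaranteeing that $v$ is the class of a semistable object --- and a $Q_t$-short vector could a priori protrude arbitrarily far along the negative-definite directions of $\NN$. Rescaling the negative part by $t$ before applying Minkowski is designed to suppress that component enough to force $v^2\ge -2$, and this is why the discriminant of $\NS(X)$ --- which measures how ``thick'' the Mukai lattice is in those directions --- must enter $C$; optimizing the trade-off between being short in $P$ and short in $P^\perp$ is what the bookkeeping manages. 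A secondary point deserving care is the first reduction: one must invoke the full strength of Bridgeland's structure theorem for $\Stab^\dagger$ on K3 surfaces, so that stability conditions outside the geometric chamber --- where even skyscraper sheaves need not be semistable, which is why one cannot simply use point classes --- are also covered.
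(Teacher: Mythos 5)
Your overall architecture --- reduce via the $\Aut(\D)$ and $\widetilde{\GL^+(2,\R)}$ actions to stability conditions with central charge $(\exp(\beta+i\omega),-)$, use Bayer--Macr\`i/Toda to identify $\sys(\sigma)$ with $\min\{|\ZZ_\sigma(v)|\colon v\neq 0,\ v^2\ge -2\}$, and finish with geometry of numbers --- is the same as the paper's, and your reductions are correct; your factor $\le 2$ for the $\widetilde{\GL^+(2,\R)}$ action is even sharper than the factor $4$ the paper uses, though you should note that passing from $U(\D)$ to all of $\Stab^\dagger(\D)$ requires the continuity of the systolic ratio, since Theorem \ref{thm:BridgelandK3}(3) only places a general stability condition in the \emph{closure} of $U(\D)$. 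Where you genuinely diverge is the lattice step. The paper proves Proposition \ref{prop:latticeThm1} by constructing an explicit symmetric cross-polytope $\cC=\{\sum e_iv_i\colon\sum|e_i|\le1\}$ in $\R^{\rho+2}$ from an orthogonal frame of $\langle\omega\rangle^\perp$, checking that its volume exceeds $2^{\rho+2}$ and that the two linear conditions and the quadratic condition $D^2-2rs\ge-2$ hold on all of $\cC$, and then invoking Minkowski's convex body theorem. You instead project onto the positive-definite $2$-plane $P=\langle\mathrm{Re}\,\Omega,\mathrm{Im}\,\Omega\rangle$ (your identities $|\ZZ_\sigma(v)|^2=\omega^2\,v_P^2$ and $\det Q_t=t^\rho|\disc\,\NS(X)|$ both check out), apply Minkowski's first theorem to the rescaled definite form $Q_t$, and tune $t$ so that the negative-definite component is small enough to force $v^2\ge-2$. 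This is coordinate-free and buys a better bound: optimizing over $t$ gives $\sys(\sigma)^2/\vol(\sigma)\le\gamma_{\rho+2}^{(\rho+2)/2}\,2^{-\rho/2}\,|\disc\,\NS(X)|^{1/2}$ up to the reduction factor, i.e.\ square-root rather than linear dependence on the discriminant. The one overstatement is that this ``produces the constant $C$ in the statement'': it does not --- it produces a constant of a different shape --- but since $\rho\le 20$ for a complex projective K3 surface, even Hermite's classical bound $\gamma_n\le(4/3)^{(n-1)/2}$ shows your constant is at most the $C$ of the theorem for every admissible $\rho$ and every $|\disc\,\NS(X)|\ge1$, so the theorem as stated follows, in fact in a slightly stronger form.
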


\begin{Thm}
\label{mainThm2}
Let $X$ be a K3 surface of Picard rank one, with $\NS(X)=\Z H$ and $H^2=2n$.
Then
$$
\sys(\sigma)^2\leq 4(n+1)\cdot\vol(\sigma)
$$
holds for any $\sigma\in\Stab^\dagger(\D^b\Coh(X))$.
\end{Thm}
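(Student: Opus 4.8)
\emph{Proof strategy.} The plan is to reduce, using the symmetries of $\Stab^\dagger$, to a two-dimensional lattice inequality about the Mukai lattice of $X$, and then to treat that inequality by a dichotomy on the size of the K\"ahler parameter. By Lemma~\ref{Lem:sys}(2), Lemma~\ref{Lem:vol}(1) and the continuity Proposition above, the systolic ratio $\sigma\mapsto\sys(\sigma)^2/\vol(\sigma)$ is $\Aut(\D)$-invariant and continuous. By Bridgeland's description of $\Stab^\dagger(\D^b\Coh(X))$, every $\sigma\in\Stab^\dagger$ lies, up to the $\Aut(\D)$-action, in the closure of the geometric chamber, and every geometric stability condition has the form $\sigma_{\beta,\omega}\cdot g$ with $\beta,\omega\in\NS(X)\otimes\R$, $\omega^2>0$, $g\in\widetilde{\GL^+(2,\R)}$, where $\sigma_{\beta,\omega}$ has central charge $\ZZ(v)=(\exp(\beta+i\omega),v)$ as in Example~\ref{eg:K3}. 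Since Lemma~\ref{Lem:sys}(4) and Example~\ref{eg:K3} give $\sys(\sigma\cdot g)^2/\vol(\sigma\cdot g)\le 2\,\sys(\sigma)^2/\vol(\sigma)$ (the factor $2$ coming from $(1-|t_2|)^2\ge0$), it suffices to prove
\[
\sys(\sigma_{\beta,\omega})^2\le 2(n+1)\cdot\vol(\sigma_{\beta,\omega}).
\]
As $\NS(X)=\Z H$ we write $\beta=bH$, $\omega=wH$ with $w>0$; then $\vol(\sigma_{\beta,\omega})=2\omega^2=4nw^2$ by Example~\ref{eg:K3}, and a Mukai-pairing computation gives $\ZZ_{\sigma_{\beta,\omega}}(r,cH,s)=-nr\tau^2+2nc\tau-s$ with $\tau:=b+iw\in\H$.

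\emph{Reduction to an arithmetic inequality.} The geometric input I would use is: for every $v\in\NN(\D)$ with $(v,v)\ge-2$, one of $\pm v$ is the Mukai vector of a $\sigma_{\beta,\omega}$-semistable object; for $(v,v)\ge0$ this is nonemptiness of the moduli space of semistable objects (Mukai, Yoshioka, Bayer--Macr\`i), and for $(v,v)=-2$ every spherical class has a semistable representative in $\Stab^\dagger$. Since $|\ZZ(v)|=|\ZZ(-v)|$ and the systole is attained on semistable objects, this gives $\sys(\sigma_{\beta,\omega})\le|\ZZ_{\sigma_{\beta,\omega}}(v)|$ for every nonzero $v=(r,cH,s)\in\Z^3$ with $rs\le nc^2+1$. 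It remains to find, for each $\tau=b+iw\in\H$, such a $v$ with $\bigl|-nr\tau^2+2nc\tau-s\bigr|^2\le 8n(n+1)w^2$.

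\emph{The dichotomy.} When $w\ge\tfrac{1}{2\sqrt3\,(n+1)}$ I would take $r=0$: all classes $(0,cH,s)$ satisfy $(v,v)=2nc^2\ge0$, their central charges $\{2nc\tau-s:c,s\in\Z\}$ form a lattice in $\C\cong\R^2$ of covolume $2nw$, and Hermite's inequality with the optimal constant $\gamma_2=\tfrac{2}{\sqrt3}$ yields a nonzero $v$ with $|\ZZ(v)|^2\le\tfrac{2}{\sqrt3}\cdot 2nw=\tfrac{4nw}{\sqrt3}\le 8n(n+1)w^2$. When $w$ is below a suitable threshold $w_0=w_0(n)$ I would instead use the isotropic classes $v=(g^2,gcH,nc^2)$ (so $(v,v)=0$), for which $\ZZ_{\sigma_{\beta,\omega}}(v)=-n(g\tau-c)^2$; Dirichlet's approximation theorem with $Q=\lceil w^{-1/2}\rceil$ gives integers $1\le g\le Q$ and $c$ with $|gb-c|\le w^{1/2}$, hence
\[
|\ZZ(v)|=n\bigl((gb-c)^2+g^2w^2\bigr)\le n\bigl(w+Q^2w^2\bigr)\le nw\bigl(2+2w^{1/2}+w\bigr),
\]
and one checks $n^2w^2(2+2w^{1/2}+w)^2\le 8n(n+1)w^2$ once $w\le w_0(n)$ (e.g.\ $w_0=4-2\sqrt3$ when $n=1$, a smaller fixed threshold for each $n\ge2$). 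Finally one verifies that the ranges $w\ge\tfrac{1}{2\sqrt3(n+1)}$ and $w\le w_0(n)$ overlap, hence cover all $w>0$, which finishes the proof.

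\emph{Where the difficulty lies.} The delicate step is the small-$w$ regime: skyscraper sheaves and line bundles are useless there, since their central charges stay bounded away from $0$, so one must genuinely exhibit objects whose central charge tends to $0$ with $w$, and with enough precision to recover the explicit constant rather than a bound of the form $O_n(1)$; one also has to know that the auxiliary (non-spherical) classes used are represented by semistable objects away from the large-volume chamber. The Picard-rank-one hypothesis is exactly what makes this work: it turns the central charge into a quadratic polynomial in a single modular parameter $\tau\in\H$, so the problem becomes a genuinely two-dimensional lattice/Diophantine question in which the sharp Hermite and Dirichlet constants apply. In the higher-rank case of Theorem~\ref{mainThm1} no such reduction is available, which is why only the weaker bound is obtained there.
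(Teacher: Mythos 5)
Your argument is correct, and its global skeleton coincides with the paper's: reduce by continuity and $\Aut(\D)$-invariance via Theorem \ref{thm:BridgelandK3} to the geometric chamber, absorb the $\widetilde{\GL^+(2,\R)}$-action using Lemma \ref{Lem:sys}(4) and Example \ref{eg:K3}, and convert the systole into a lattice minimum over $\{v\neq0:\ v^2\ge-2\}$ via the Bayer--Macr\`i nonemptiness input of Proposition \ref{Prop:key}. (Your observation that $(1+|t_2|)^2/(1+|t_2|^2)\le2$, rather than the paper's crude bound $<4$, is a small genuine improvement; it is what lets you prove only the weaker constant $2(n+1)$ on the normalized slice and still land on $4(n+1)$.) Where you genuinely diverge is in the arithmetic core, the analogue of Proposition \ref{prop:latticeThm2}. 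The paper splits at $\omega=1/\sqrt n$, takes the skyscraper class $(0,0,1)$ for large $\omega$, and for small $\omega$ runs a two-variable pigeonhole (Lemma \ref{Lem:est}) on the quantities $2n\beta d_j+n\beta^2 j$ modulo $1$, producing classes with $0\le v^2\le 2n$ that need not be isotropic. You instead use the whole rank-zero sublattice $\{(0,cH,s)\}$ with the sharp Hermite constant for large $w$ (which pushes the crossover down to $1/(2\sqrt3(n+1))$), and for small $w$ the explicitly isotropic classes $v=(g^2,gcH,nc^2)$, whose central charge is the perfect square $-n(g\tau-c)^2$, so that a one-variable Dirichlet approximation of $b$ by $c/g$ suffices. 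I verified the one step you left implicit, namely that the two regimes overlap for every $n$: your small-$w$ threshold $w_0(n)$ is the root of $2\sqrt{w}+w=\sqrt{8+8/n}-2$ and hence satisfies $w_0(n)>0.12$ for all $n$ (with $w_0(1)=4-2\sqrt3$), while $1/(2\sqrt3(n+1))$ equals about $0.144$ for $n=1$ (well below $0.536$) and is at most $1/(6\sqrt3)<0.097$ for $n\ge2$; so the dichotomy covers all $w>0$ and the final constant is again $4(n+1)$. Your route is arguably more conceptual (rational points on the isotropic cone approximating the large-volume limit, plus the optimal two-dimensional lattice constant), at the cost of a slightly fussier numerical overlap check; the paper's pigeonhole is more self-contained and generalizes less rigidly, but neither method extends to higher Picard rank for the same reason you identify.
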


\subsection{Reduction to lattice-theoretic problems}

We start with recalling some standard notations.
Let $X$ be a smooth complex projective K3 surface
and $\D=\D^b\Coh(X)$ be its derived category.
Sending an object $E\in\D$ to its Mukai vector $v(E) = \ch(E)\sqrt{\td(X)}$
identifies the numerical Grothendieck group of $\D$ with the lattice
$$
\NN(\D) \cong H^0(X,\Z) \oplus \NS(X) \oplus H^4(X,\Z).
$$
The Mukai pairing on $\NN(\D)$ is given by
$$
((r_1, D_1, s_1), (r_2, D_2, s_2)) =  D_1\cdot D_2 - r_1s_2 - r_2s_1.
$$
Since the Mukai pairing on $\NN(\D)$ is non-degenerate, any group homomorphism $\ZZ: \NN(D) \ra \C$ can be written as $\ZZ(v) = (\Omega, v)$ for a unique $\Omega \in \NN(\D)\otimes\C$.
This defines a map
\[
\pi\colon\Stab(\D)\ra\NN(\D)\otimes\C
\]
which sends a stability condition $\sigma=(\ZZ_\sigma,\PP_\sigma)$ to the unique element in $\NN(\D)\otimes\C$ associated to its central charge $\ZZ_\sigma:\NN(\D)\ra\C$.

Define
\[
\Delta^+(\D) \coloneqq \{ \delta=(r,D,s)\in\NN(\D)\colon \delta^2=-2 \text{\ and\ } r>0\},
\]
\[
\mathcal{K}(\D) \coloneqq \{ \Omega=\exp(\beta+i\omega)\in\NN(\D)\otimes\C\colon \beta,\omega\in\NS(X)\otimes\R,\ \omega^2>0\},
\]
\[
\mathcal{L}(\D) \coloneqq \{ \Omega \in\mathcal{K}(\D)\colon (\Omega, \delta)\notin\R_{\leq0} \text{\ for all\ } \delta\in\Delta^+(\D)\}.
\]

Now we recall the description of the distinguished connected component $\Stab^\dagger(\D)\subset\Stab(\D)$ by Bridgeland.

\begin{Thm}[\cite{BriK3}]
\label{thm:BridgelandK3}
Let $X$ be a complex projective K3 surface and $\D=\D^b\Coh(X)$.
	\begin{enumerate}[label=(\alph*)]
	\item Let $V(\D)\subset\Stab(\D)$ be the subset consisting of geometric stability conditions constructed via
	tilting of $\Coh(X)$ in \cite[Section 6]{BriK3}. Then the map $\pi$ restricts to give a homeomorphism
	\[
	\pi|_{V(\D)}\colon V(\D)\ra\mathcal{L}(\D).
	\]
	
	\item Let $U(\D)\subset\Stab(\D)$ be the open subset consisting of all geometric stability conditions.
	For any $\sigma\in U(\D)$, there exists a unique $g\in\widetilde{\GL^+(2,\R)}$ such that $\sigma\cdot g\in V(\D)$.
	
	\item Let $\Stab^\dagger(\D)\subset\Stab(\D)$ be the connected component containing $U(\D)$.
	Every stability condition in $\Stab^\dagger(\D)$ is mapped into the closure of $U(\D)$ by some autoequivalence of $\D$.
	\end{enumerate}
\end{Thm}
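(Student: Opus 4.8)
Theorem~\ref{thm:BridgelandK3} is the main structural result of \cite{BriK3}, so rather than reproduce the argument I will only indicate the strategy one would follow. Its core is an explicit construction of the geometric stability conditions by tilting, followed by a covering-space argument that pins down the whole connected component.

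For part (1), the plan is to build, for each $\beta,\omega\in\NS(X)\otimes\R$ with $\omega$ in the positive cone, a heart of a bounded $t$-structure on $\D$ by tilting $\Coh(X)$ at the $\mu_\omega$-slope $\beta\cdot\omega$: let $\mathcal{T}_{(\beta,\omega)}$ be the extension-closed subcategory generated by all torsion sheaves and all $\mu_\omega$-semistable torsion-free sheaves of slope $>\beta\cdot\omega$, let $\mathcal{F}_{(\beta,\omega)}$ be generated by the $\mu_\omega$-semistable torsion-free sheaves of slope $\leq\beta\cdot\omega$, and set $\mathcal{A}_{(\beta,\omega)}=\langle\mathcal{F}_{(\beta,\omega)}[1],\mathcal{T}_{(\beta,\omega)}\rangle$; the candidate central charge is $\ZZ(v)=(\exp(\beta+i\omega),v)$. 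One then verifies: (i) $(\ZZ,\mathcal{A}_{(\beta,\omega)})$ is a stability function, i.e.\ $\mathrm{Im}\,\ZZ(E)\geq0$ for $0\neq E\in\mathcal{A}_{(\beta,\omega)}$, with $\mathrm{Re}\,\ZZ(E)<0$ whenever $\mathrm{Im}\,\ZZ(E)=0$ --- and this is exactly where the hypothesis $\Omega=\exp(\beta+i\omega)\in\mathcal{L}(\D)$ is used, the failure locus being precisely where some $\delta\in\Delta^+(\D)$ satisfies $(\Omega,\delta)\in\R_{\leq0}$; and (ii) the Harder--Narasimhan and support properties, the former from Noetherianity of $\mathcal{A}_{(\beta,\omega)}$ (finiteness of $\NN(\D)$ together with boundedness of families of semistable sheaves) and the latter from the Bogomolov--Gieseker inequality, which furnishes the required quadratic bound on $\|v(E)\|$ in terms of $|\ZZ(E)|$. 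Finally one checks that $(\beta,\omega)\mapsto\sigma_{(\beta,\omega)}$ is continuous, injective and proper and that $\pi(\sigma_{(\beta,\omega)})=\exp(\beta+i\omega)$ sweeps out all of $\mathcal{L}(\D)$, giving the homeomorphism $\pi|_{V(\D)}\colon V(\D)\to\mathcal{L}(\D)$.

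For part (2): if $\sigma=(\ZZ,\PP)$ is geometric then every skyscraper $\O_x$ is $\sigma$-stable and the $\O_x$ all share one phase, so there is a unique $g\in\widetilde{\GL^+(2,\R)}$ with $\ZZ_{\sigma\cdot g}(\O_x)=-1$ of phase $1$. Writing $\ZZ_{\sigma\cdot g}=(\Omega,-)$, evaluation on $v(\O_x)=(0,0,1)$ fixes the rank-$1$ component of $\Omega$; stability of the $\O_x$ together with the vanishing of the relevant $\Hom$-groups against structure sheaves of points and curves then forces $\Omega^2=0$, so $\Omega=\exp(\beta+i\omega)$ with $\omega$ in the positive cone, and one identifies $\PP((0,1])$ with $\mathcal{A}_{(\beta,\omega)}$, whence $\sigma\cdot g\in V(\D)$.

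Part (3) is the hard part, and the plan there is a covering-space argument: show that $U(\D)$ is connected (so that $\Stab^\dagger(\D)$ is well defined) and that $\pi$ restricts to a \emph{covering map} from $\Stab^\dagger(\D)$ onto the period domain of those $\Omega\in\NN(\D)\otimes\C$ whose real and imaginary parts span a positive-definite $2$-plane and with $\Omega\notin\delta^\perp$ for every $\delta$ with $\delta^2=-2$. The closure $\overline{U(\D)}$ maps onto a fundamental domain for the group generated by the reflections in the hyperplanes $\delta^\perp$, and these reflections are realized on $\NN(\D)$ by spherical twists (together with shifts), so the $\Aut(\D)$-orbit of $\overline{U(\D)}$ exhausts $\Stab^\dagger(\D)$. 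I expect this last step to be the main obstacle: upgrading ``$\pi$ is a local homeomorphism'' to ``$\pi$ is a covering onto that period domain'' demands a delicate path-lifting and wall-crossing analysis at the loci where a spherical object becomes semistable, followed by an identification of the deck group with the subgroup of $\Aut(\D)$ generated by spherical twists --- this, far more than the linear algebra in the construction step, is where the real work sits.
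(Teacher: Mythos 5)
This theorem is not proved in the paper at all: it is imported verbatim from Bridgeland's work, as the citation \cite{BriK3} in the theorem header indicates, so there is no in-paper argument to compare your proposal against. Your outline is, as far as it goes, a faithful summary of Bridgeland's own strategy --- part (1) is the tilting construction and verification of the stability-function/Harder--Narasimhan/support axioms from \cite[Sections 6--7]{BriK3} (with the role of $\mathcal{L}(\D)$, i.e.\ of the classes $\delta\in\Delta^+(\D)$, correctly identified), part (2) is the normalization of geometric stability conditions from \cite[Section 10]{BriK3}, and part (3) is the covering-map theorem over the period domain $\mathcal{P}_0^+(X)$ together with the wall-crossing/spherical-twist analysis of \cite[Sections 12--13]{BriK3}; you also correctly locate the genuine difficulty in upgrading the local homeomorphism to a covering map. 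Since the present paper simply quotes the result, the appropriate "proof" here is exactly the citation, and your sketch is consistent with the cited source.
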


The following proposition allows us to compute the categorical systole of a stability condition on $\D$ using \emph{only} its central charge.

\begin{Prop}\label{Prop:key}
Let $\sigma=(\ZZ_\sigma, \PP_\sigma)\in\Stab^\dagger(\D)$.
Then
$$
\sys(\sigma) = \min \{ | \ZZ_\sigma(v) | \colon 0 \neq v \in \NN(\D),\ v^2=(v,v) \geq -2 \}.
$$
\end{Prop}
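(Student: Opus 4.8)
The plan is to prove the two inequalities ``$\geq$'' and ``$\leq$'' separately; the fact that the infimum on the right is actually attained will come out of the first inequality. For ``$\geq$'': by Remark~\ref{Rmk:sys} and Lemma~\ref{Lem:sys}(1) there is a $\sigma$-stable object $E_0$ with $\sys(\sigma)=|\ZZ_\sigma(E_0)|$. Here I would exploit that $\D=\D^b\Coh(X)$ is a $2$-Calabi--Yau category. A $\sigma$-stable object is a simple object of the abelian category $\PP_\sigma(\phi)$, so $\Hom(E_0,E_0)=\C$ by Schur's lemma; axiom (3) of Definition~\ref{def:stab} forces $\Hom(E_0,E_0[k])=0$ for $k<0$, and Serre duality then also gives $\Hom(E_0,E_0[2])\cong\Hom(E_0,E_0)^\vee=\C$ and $\Hom(E_0,E_0[k])=0$ for $k>2$. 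Hence $\chi(E_0,E_0)=2-\hom(E_0,E_0[1])\leq 2$, and since on a K3 surface the Euler form and the Mukai pairing satisfy $\chi(E,F)=-(v(E),v(F))$, we get $v(E_0)^2\geq-2$; moreover $v(E_0)\neq0$ because $\ZZ_\sigma(v(E_0))=\ZZ_\sigma(E_0)\neq0$. Thus $\sys(\sigma)=|\ZZ_\sigma(v(E_0))|$ with $v(E_0)$ one of the admissible classes, which proves the inequality and shows the right-hand infimum is attained.

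For ``$\leq$'' I must show $|\ZZ_\sigma(v)|\geq\sys(\sigma)$ for every nonzero $v\in\NN(\D)$ with $v^2\geq-2$. First suppose $v$ is primitive. Since $\Stab^\dagger(\D)$ is open and the walls for $v$ form a locally finite family, I can choose a sequence $\sigma_n\to\sigma$ in $\Stab^\dagger(\D)$ with each $\sigma_n$ generic with respect to $v$. By the non-emptiness of moduli spaces of Bridgeland-semistable objects on projective K3 surfaces (results of Bayer--Macr\`i, building on Mukai and Yoshioka), for each $n$ there is a $\sigma_n$-stable object $E_n$ with $v(E_n)=v$, so $\sys(\sigma_n)\leq|\ZZ_{\sigma_n}(E_n)|=|\ZZ_{\sigma_n}(v)|$. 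Letting $n\to\infty$ and using that $\sys$ is continuous on $\Stab(\D)$ (proven above) and that $\sigma\mapsto\ZZ_\sigma(v)$ is continuous, I obtain $\sys(\sigma)\leq|\ZZ_\sigma(v)|$. If instead $v=mw$ with $w$ primitive and $m\geq2$, then $w^2=v^2/m^2>-1$, hence $w^2\geq0$, and applying the primitive case to $w$ gives $\sys(\sigma)\leq|\ZZ_\sigma(w)|=\tfrac1m|\ZZ_\sigma(v)|\leq|\ZZ_\sigma(v)|$. Combining the two inequalities completes the proof.

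I expect the main obstacle to be the non-emptiness input used in the second part: one needs moduli spaces of Bridgeland-semistable objects with a prescribed Mukai vector $v$ with $v^2\geq-2$ to be non-empty for generic stability conditions throughout the distinguished component $\Stab^\dagger(\D)$, and this should be cited with care (for primitive $v$ with $v^2=-2$ the moduli space is a single point, while for $v^2\geq0$ it is a non-empty positive-dimensional variety). The trick that keeps the remainder of the argument elementary is to invoke this only at $v$-generic $\sigma_n$, where a semistable object of class $v$ genuinely exists (and is stable), and then to pass to the limit using the continuity of the systole; this avoids any wall-crossing analysis at non-generic stability conditions or at the boundary of the geometric chamber, which would otherwise be the delicate point.
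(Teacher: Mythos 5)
Your argument is correct, and both halves rest on the same two inputs as the paper's proof: the lower bound comes from the fact that a $\sigma$-stable object $E$ is simple, so Serre duality gives $v(E)^2=-\chi(E,E)=-2+\hom^1(E,E)\geq -2$, and the upper bound comes from the non-emptiness of moduli of $\sigma$-semistable objects of class $v$ due to Bayer--Macr\`i (building on Toda). The one genuine difference is in how you invoke the existence result. The paper uses it at full strength: \cite[Theorem 6.8]{BM} produces a $\sigma$-semistable object of class $v=mv_0$ (with $v_0$ primitive, $v_0^2\geq -2$) for \emph{every} $\sigma\in\Stab^\dagger(\D)$, not only for $v$-generic ones, so the inequality $\sys(\sigma)\leq |\ZZ_\sigma(v)|$ is immediate with no limiting argument. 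You instead assume only generic non-emptiness, approximate $\sigma$ by $v$-generic $\sigma_n$ using local finiteness of walls, and pass to the limit via the continuity of $\sys$ and of $\sigma\mapsto\ZZ_\sigma(v)$; this is valid (continuity of $\sys$ is established earlier in the paper) but strictly unnecessary given the form of the cited theorem. Your explicit reduction of the imprimitive case to the primitive one, using evenness of the Mukai lattice to conclude $w^2\geq 0$ from $w^2>-1$, is a point the paper leaves implicit (it states the hypothesis as $v_0^2\geq-2$ for the primitive part $v_0$ of $v$), so spelling it out is a small gain in completeness. In short: same proof, with one extra approximation step that buys you a weaker hypothesis on the non-emptiness input at the cost of some length.
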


\begin{proof}
%It suffices to prove the statement for $\sigma\in\Stab^\dagger(\D)$,
%since autoequivalences induce Hodge isometries on the numerical Grothendieck group $\NN(\D)$,
%and $\Stab^\Delta(\D)$ is the image of the connected component $\Stab^\dagger(\D)$
%under the $\Aut(\D)$-actions.
Let $v=mv_0\in\NN(\D)$ be a Mukai vector, where $m\in\Z_{>0}$ and $v_0$ is primitive.
A result of Bayer and Macr\`i \cite[Theorem 6.8]{BM},
which is based on a previous result of Toda \cite{TodaK3},
says that if $v_0^2\geq-2$, then there exists a $\sigma$-semistable object with Mukai vector $v$ for \emph{any} $\sigma\in\Stab^\dagger(\D)$.
Hence
\begin{align}
\sys(\sigma) & = \min \{ | \ZZ_\sigma(E) | \colon E \text{ is a } \sigma\text{-semistable object in }\D \}
\notag \\
 &\leq  \min \{ | \ZZ_\sigma(v) | \colon 0 \neq v \in \NN(\D),\ v^2 \geq -2 \}.
\notag
\end{align}

On the other hand, for any stable object $E$,
\begin{align}
v(E)^2 & = -\chi(E, E) = - \hom^0(E,E) + \hom^1(E,E) - \hom^2(E,E)
\notag \\
 & = -2 + \hom^1(E,E) \geq -2.
\notag
\end{align}

Hence
\begin{align}
\sys(\sigma) & = \min \{ | \ZZ_\sigma(E) | \colon E \text{ is a } \sigma\text{-stable object in }\D \}
\notag \\
 &\geq  \min \{ | \ZZ_\sigma(v) | \colon 0 \neq v \in \NN(\D),\ v^2 \geq -2 \}.
\notag
\end{align}

This concludes the proof.
\end{proof}

%\begin{Rmk} \label{Rmk:Mis}
%By Proposition \ref{Prop:key},
%the set of Mukai vectors needed for computing the categorical systole
%is independent of stability condition $\sigma\in\Stab^\Delta(\D)$.
%This is also the case for elliptic curves,
%where the categorical systole is the minimum among the absolute values of central charges 
%of all the nonzero vectors $(d,r)\in\Z^2$.
%However, this is not true in general, see for instance Example \ref{eg:A2}.
%\end{Rmk}

We can now reduce the categorical systolic inequality to a lattice-theoretic problem. To prove Theorem \ref{mainThm1} and \ref{mainThm2}, one needs to find an upper bound of the systolic ratio
\[
\frac{\sys(\sigma)^2}{\vol(\sigma)}
\]
for all $\sigma\in\Stab^\dagger(\D)$.
Since the systolic ratio is a continuous function and is invariant under the actions of autoequivalences, by Theorem \ref{thm:BridgelandK3} (3), it suffices to find an upper bound of the systolic ratio on $U(\D)$.

By Theorem \ref{thm:BridgelandK3} (2), any element in $U(\D)$ can be written as $\sigma\cdot g$ for some $\sigma\in V(\D)$ and $g\in\widetilde{\GL^+(2,\R)}$.
By Lemma \ref{Lem:sys} (4) and the computations in Example \ref{eg:K3}, we have
\[
\frac{\sys(\sigma\cdot g)^2}{\vol(\sigma\cdot g)} \leq
\frac{(1+|t_2|)^2}{1+|t_2|^2} \cdot \frac{\sys(\sigma)^2}{\vol(\sigma)} <
4 \cdot \frac{\sys(\sigma)^2}{\vol(\sigma)}
\]
since $|t_2|<1$ (recall the notations in Lemma \ref{Lem:sys}).
Therefore, it is enough to find an upper bound of the systolic ratio on $V(\D)$.

By Theorem \ref{thm:BridgelandK3} (1), the central charges of stability conditions in $V(\D)$ are of the form
\[
\ZZ_\sigma(v)=(\exp(\beta+i\omega), v)
\]
where $\beta,\omega\in\NS(X)\otimes\R$ and $\omega^2>0$.
By Example \ref{eg:K3}, the volume of the stability conditions of this form is $2\omega^2$.
On the other hand, the categorical systole can be computed by Proposition \ref{Prop:key} given the central charge:
\begin{align*}
\sys(\sigma) & = \min \{ | \ZZ_\sigma(v) | \colon 0 \neq v \in \NN(\D),\ v^2=(v,v) \geq -2 \} \\
& = \min_{\substack{{(r,D,s)\neq(0,0,0)}\\{D^2-2rs\geq-2}}}
\Big\{ \Big|(s+\beta.D+\frac{1}{2}(\beta^2-\omega^2)r)+i(\omega.(D+r\beta))\Big| \Big\}
\end{align*}

Hence, Theorem \ref{mainThm1} and \ref{mainThm2} can be obtained by proving the following lattice-theoretic statements.

\begin{Prop}
\label{prop:latticeThm1}
For any $\beta,\omega\in\NS(X)\otimes\R$ with $\omega^2>0$ and any $C>\frac{(\rho+2)!\sqrt{|\disc\NS(X)|}}{2^{(\rho+1)/2}}$,
there exists $(r,D,s)\in\NN(\D)\bs\{(0,0,0)\}$ satisfying
\begin{enumerate}[label=(\alph*)]
    \item $|\omega.(D+r\beta)|\leq C\sqrt{\omega^2}$;
    \item $|s+\beta.D+\frac{1}{2}(\beta^2-\omega^2)r|\leq \sqrt{2\omega^2}$;
    \item $D^2-2rs\geq-2$.
\end{enumerate}
\end{Prop}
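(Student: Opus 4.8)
The plan is to find the desired lattice vector in stages, using a Minkowski-type geometry-of-numbers argument adapted to the structure of the problem. The key observation is that condition (3), $D^2 - 2rs \geq -2$, is easy to arrange once $r$ and $D$ are fixed: for any $r$ and $D$, we may choose the integer $s$ freely, and $D^2 - 2rs \geq -2$ holds as long as $s$ is not too large (more precisely, if $r>0$ we need $s \leq (D^2+2)/(2r)$, and if $r=0$ we need $D^2 \geq -2$, which is automatic unless $D$ is a $(-2)$-class — and even then $D^2=-2$ still satisfies the inequality). So the real content is to control the \emph{size} of the two quantities in (1) and (2) simultaneously while keeping $s$ in the allowed range. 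Accordingly, I would first handle the ``rank zero'' reduction: try to find $(0, D, s)$ with $D \in \NS(X)$ nonzero, $|\omega . D| \leq C\sqrt{\omega^2}$, and $|s + \beta.D - \tfrac12\omega^2 r|$ — wait, with $r=0$ this is $|s + \beta.D| \leq \sqrt{2\omega^2}$, which is trivially arranged by picking $s$ to be the nearest integer to $-\beta.D$, costing at most $1/2 \leq \sqrt{2\omega^2}$ only if $\omega^2 \geq 1/8$. Since $\omega^2$ can be small, I will instead not fix $r=0$ but run the full argument.

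The main step is a lattice-point count. Consider the real-linear map $\NN(\D) \otimes \R \to \R^{\rho+2}$ sending $(r,D,s)$ to the tuple consisting of the single coordinate $\omega.(D+r\beta)/\sqrt{\omega^2}$, the single coordinate $s + \beta.D + \tfrac12(\beta^2-\omega^2)r$, and the $\rho$ coordinates given by the components of the projection $D + r\beta$ onto the orthogonal complement of $\omega$ inside $\NS(X)\otimes\R$, suitably normalized. I would check that this map has determinant (with respect to the standard lattice $H^0 \oplus \NS(X) \oplus H^4$ and Lebesgue measure) equal to $\sqrt{|\disc \NS(X)|}$ up to the normalization factors — the point being that the quadratic form $D^2 - 2rs$ on $\NN(\D)$ has discriminant $\disc \NS(X)$, and the change of variables $(r, D, s) \mapsto (r, D+r\beta, s + \ldots)$ is unimodular. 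Then applying Minkowski's convex body theorem to the box $|x_1| \leq C$, $|x_2| \leq \sqrt{2\omega^2}$, $|x_{2+k}| \leq R_k$ — with the $R_k$ chosen so that the total volume $2^{\rho+2} \cdot C \cdot \sqrt{2\omega^2} \cdot \prod R_k$ exceeds $2^{\rho+2} \sqrt{|\disc \NS(X)|}$ — produces a nonzero lattice vector in the box. The constant $C > \frac{(\rho+2)!\sqrt{|\disc\NS(X)|}}{2^{(\rho+1)/2}}$ in the statement, with its factorial, suggests that the argument is not a direct application of Minkowski but rather an iterated one-dimensional reduction (successive minima / a Hermite-style induction on dimension), where each of the $\rho+2$ steps introduces a factor roughly $\rho+2$, accumulating to $(\rho+2)!$; I would follow that route, reducing dimension one coordinate at a time and absorbing the $\sqrt{2}$ factors into the $2^{(\rho+1)/2}$ denominator.

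The hard part will be arranging condition (3) compatibly with the Minkowski/successive-minima output: the theorem only gives a vector in the box, but I must ensure that the resulting integer $s$ actually satisfies $D^2 - 2rs \geq -2$ rather than merely being bounded. The trick is that controlling $x_2 = s + \beta.D + \tfrac12(\beta^2 - \omega^2)r$ and $x_1 = \omega.(D+r\beta)$ together pins down $s$ to within $\sqrt{2\omega^2}$ of the value $-\beta.D - \tfrac12(\beta^2-\omega^2)r$, and one computes that for this value $D^2 - 2rs$ equals (after expanding) something like $(D+r\beta)^2 - r^2 \cdot(\text{stuff}) + O(\sqrt{\omega^2})$ corrections; bounding $(D+r\beta)^2$ from below using $|x_1|^2 = (\omega.(D+r\beta))^2 \leq (\text{small})$ plus the orthogonal part being controlled by the $R_k$'s. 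I would need to carefully trace through that the box dimensions force $(D+r\beta)^2 \geq -2$, essentially because the light-cone condition $\omega^2 > 0$ means the only way to get $(D+r\beta)^2$ very negative is to have $\omega.(D+r\beta)$ large, which the box forbids. If a direct choice of $s$ fails the bound by a little, replacing $s$ by $s\mp 1$ changes $x_2$ by $1$ and $D^2 - 2rs$ by $\pm 2r$, giving enough flexibility to land in the feasible region; managing this case analysis cleanly, and verifying the numerology produces exactly the stated constant, is where the real work lies.
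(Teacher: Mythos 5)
Your instinct to use Minkowski's theorem and your choice of linear functionals for conditions (1) and (2) match the paper, but there are two genuine gaps. First, your coordinate map is rank-deficient: $x_1=\omega.(D+r\beta)/\sqrt{\omega^2}$, $x_2=s+\beta.D+\tfrac12(\beta^2-\omega^2)r$, and the $\rho-1$ components of the projection of $D+r\beta$ onto $\langle\omega\rangle^\perp$ give only $\rho+1$ functionals on the $(\rho+2)$-dimensional lattice; the $r$-direction is left unconstrained. This is fatal for condition (3), because expanding gives $D^2-2rs=x_1^2+P^2-2rx_2-r^2\omega^2$ (where $P$ is the component of $D+r\beta$ orthogonal to $\omega$, so $P^2\leq0$), and with $x_1,x_2,P$ all small but $r\sqrt{\omega^2}$ unbounded the term $-r^2\omega^2$ makes $D^2-2rs$ arbitrarily negative. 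Second, your fallback for condition (3) --- take the Minkowski point from a box and then replace $s$ by $s\mp1$ --- does not work: shifting $s$ by $1$ shifts $x_2$ by $1$, which violates the constraint $|x_2|\leq\sqrt{2\omega^2}$ precisely in the regime $\omega^2<1/2$ that you already identified as the hard one.

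The paper's resolution is to arrange that condition (3) holds at \emph{every} point of the convex body, so that no patching is needed. It takes the body to be the cross-polytope $\cC=\{\sum_i e_iv_i:\sum_i|e_i|\leq1\}$ on $\rho+2$ explicit vertices: $\sqrt2(0,D_k,-\beta.D_k)$ for an orthogonal basis $D_1,\dots,D_{\rho-1}$ of $\langle\omega\rangle^\perp$ with $D_k^2=-1$, plus three more vectors pinning down the $r$-, $\omega$-, and $s$-directions (in particular $v_\rho$ bounds $|r|\sqrt{\omega^2}$, the constraint your setup is missing). In these coordinates $D^2-2rs=e_{\rho+1}^2C^2-2(e_1^2+\cdots+e_\rho^2)-4e_\rho e_{\rho+2}\geq-2(e_1^2+\cdots+e_{\rho-1}^2)-2(e_\rho+e_{\rho+2})^2\geq-2$, using $\sum_i|e_i|\leq1$; a box would only give a bound like $-2(\rho+3)$, so the $\ell^1$-shape is essential. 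Conditions (1) and (2) hold on all of $\cC$ for the reason you state (they are linear and hold at the vertices). Finally, the factorial in the constant is not an iterated successive-minima loss as you guessed: it is simply the volume of the cross-polytope, $\frac{2^{\rho+2}}{(\rho+2)!}|\det(v_1,\dots,v_{\rho+2})|$, which must exceed $2^{\rho+2}$ for Minkowski to apply.
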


\begin{Prop}
\label{prop:latticeThm2}
For any $\beta\in\R$ and $\omega>0$,
there exists $(r,d,s)\in\Z^3\bs\{(0,0,0)\}$ not all zero satisfying
\begin{enumerate}[label=(\alph*)]
\item 
\[
\frac{|s + 2n(\beta+i\omega)d + n(\beta+i\omega)^2 r|^2}{4n\omega^2} 
<n+1;
\]
\item $nd^2-rs\geq-1$.
\end{enumerate}
\end{Prop}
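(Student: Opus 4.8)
The plan is to translate conditions (1) and (2) into a single statement about the orthogonal projection of a lattice vector onto a positive-definite plane, and then to apply Minkowski's convex body theorem to an auxiliary Euclidean structure whose covolume turns out to be $\sqrt{|\disc\NS(X)|}=\sqrt{2n}$, \emph{independently of $\beta$ and $\omega$}.

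\textbf{Reformulation via a positive plane.} Recall from Example \ref{eg:K3} that the relevant stability conditions have central charge $\ZZ_\sigma(v)=(\Omega,v)$ with $\Omega=\exp(\beta+i\omega)$, that $\vol(\sigma)=2\omega^2$, and that $(\Omega,\Omega)=0$ while $(\Omega,\overline\Omega)=2\omega^2$. Writing $\Omega=\Omega_1+i\Omega_2$ with $\Omega_1,\Omega_2\in\NN(\D)\otimes\R$, these two identities say exactly that $\Omega_1\perp\Omega_2$ and $\Omega_1^2=\Omega_2^2=\omega^2>0$; hence $P:=\R\Omega_1\oplus\R\Omega_2$ is a positive-definite plane in the signature-$(2,1)$ space $\NN(\D)\otimes\R$, and $P^\perp$ is a negative-definite line. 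For $v\in\NN(\D)$ write $v=v_P+v_\perp$ with $v_P\in P$, $v_\perp\in P^\perp$, and set $\|v_P\|^2:=(v_P,v_P)\ge0$ and $\|v_\perp\|^2:=-(v_\perp,v_\perp)\ge0$. Orthogonality of $\Omega_1,\Omega_2$ then gives
\[
|\ZZ_\sigma(v)|^2=(v,\Omega_1)^2+(v,\Omega_2)^2=\omega^2\,\|v_P\|^2,\qquad v^2=\|v_P\|^2-\|v_\perp\|^2 .
\]
Since for $\NS(X)=\Z H$, $H^2=2n$, the numerator of (1) equals $|\ZZ_\sigma(v)|^2$ for the Mukai vector $v=(r,dH,s)$ (up to the sign of $d$, which changes nothing below), $4n\omega^2=\vol(\sigma)$, and $nd^2-rs=\tfrac12v^2$, it suffices to produce a nonzero $v=(r,dH,s)\in\NN(\D)$ with $\|v_P\|^2<2(n+1)$ and $\|v_\perp\|^2<2$. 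Indeed the first inequality is condition (1), and together they give $v^2=\|v_P\|^2-\|v_\perp\|^2>-2$, so that $nd^2-rs=\tfrac12v^2$ is an integer $\ge0\ge-1$, which is condition (2).

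\textbf{The auxiliary lattice and Minkowski's theorem.} Let $q(v):=\|v_P\|^2+\|v_\perp\|^2$ on $\NN(\D)\otimes\R\cong\R^3$; this is the positive-definite form obtained from the Mukai form by reversing the sign of its negative-definite part. Reversing that sign multiplies the Gram determinant by $-1$ in an adapted orthogonal basis, hence preserves the absolute value of the Gram determinant in every basis; therefore $(\NN(\D),q)$ has covolume $\sqrt{|\disc\NN(\D)|}$. Since $\NN(\D)=U\oplus\NS(X)$ with $U$ hyperbolic, $|\disc\NN(\D)|=|\disc\NS(X)|=H^2=2n$, so the covolume is $\sqrt{2n}$ for \emph{every} $(\beta,\omega)$. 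Now consider the open, convex, centrally symmetric solid cylinder
\[
B:=\{\,v\in\NN(\D)\otimes\R:\ \|v_P\|^2<2(n+1),\ \|v_\perp\|^2<2\,\},
\]
the product of a $q$-disk of radius $\sqrt{2(n+1)}$ in $P$ with a $q$-interval of length $2\sqrt2$ in $P^\perp$, whose $q$-volume is $\pi\cdot2(n+1)\cdot2\sqrt2=4\sqrt2\,\pi(n+1)$. Because $\pi(n+1)>n+1\ge2\sqrt n$ for all $n\ge1$, this volume exceeds $8\sqrt{2n}=2^3\sqrt{2n}$, i.e.\ $2^3$ times the $q$-covolume of $\NN(\D)$, so Minkowski's convex body theorem yields a nonzero $v\in\NN(\D)\cap B$. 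This $v$ satisfies the two bounds from the previous paragraph, completing the reduction and hence proving the Proposition. (Combined with Proposition \ref{Prop:key} and the $\widetilde{\GL^+(2,\R)}$-reduction already carried out above, this gives Theorem \ref{mainThm2}.)

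\textbf{Where the work is.} The two genuinely substantive points are the reformulation — recognizing that $\Omega$ records the positive plane $P$, so that $|\ZZ_\sigma(v)|$ and $v^2$ are read off from the $P$- and $P^\perp$-components of $v$ — and the observation that the companion Euclidean lattice has covolume $\sqrt{|\disc\NS(X)|}$ regardless of the stability condition, which is precisely what lets Minkowski produce a \emph{uniform} bound. After that the volume count is routine; the one bookkeeping point is to keep the class $\omega H\in\NS(X)\otimes\R$ distinct from the scalar $\omega$ appearing in the denominator $4n\omega^2$. Finally, optimizing the radius of $B$ (taking $\|v_P\|^2<\tfrac{4\sqrt n}{\pi}+\varepsilon$ in place of $2(n+1)$, and using that a bounded region contains only finitely many lattice points) in fact sharpens the conclusion to $\sys(\sigma)^2\le\tfrac{2\sqrt n}{\pi}\vol(\sigma)$ on $V(\D)$, hence $\sys(\sigma)^2\le\tfrac{8\sqrt n}{\pi}\vol(\sigma)$ on $\Stab^\dagger(\D)$; the clean bound $4(n+1)$ is simply what the most convenient choice of $B$ yields.
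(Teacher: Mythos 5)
Your proof is correct, but it takes a genuinely different route from the paper's. The paper proves Proposition \ref{prop:latticeThm2} by an explicit pigeonhole construction: for $\omega\ge 1/\sqrt n$ the class $(0,0,1)$ of a skyscraper sheaf already satisfies both conditions, and for $\omega<1/\sqrt n$ a Dirichlet-type simultaneous approximation argument (Lemma \ref{Lem:est}) produces $(r,d,s)$ with $1\le r\le 1/(\sqrt n\omega)$, $|d+r\beta|<1$ and $|s+2n\beta d+n\beta^2 r|<\sqrt n\omega$, from which (1) and (2) follow by a short estimate. You instead run Minkowski's theorem on the cylinder $\{\|v_P\|^2<2(n+1),\ \|v_\perp\|^2<2\}$ adapted to the positive-definite plane $P$ spanned by $\mathrm{Re}\,\Omega$ and $\mathrm{Im}\,\Omega$; this is closer in spirit to the paper's proof of Proposition \ref{prop:latticeThm1} for arbitrary Picard rank, which also uses Minkowski but with a cross-polytope spanned by explicitly chosen vectors. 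I checked your computations: $(\Omega,\Omega)=0$ and $(\Omega,\overline\Omega)=2(\omega H)^2=4n\omega^2$ do give $\Omega_1\perp\Omega_2$ and $\Omega_1^2=\Omega_2^2=2n\omega^2$, hence $|\ZZ_\sigma(v)|^2=2n\omega^2\,\|v_P\|^2$ and condition (1) is exactly $\|v_P\|^2<2(n+1)$; condition (2) follows from $\|v_\perp\|^2<2$ because $v^2$ is an even integer; the covolume of $(\NN(\D),q)$ is $\sqrt{|\disc(U\oplus\Z H)|}=\sqrt{2n}$; and the volume condition $4\sqrt2\,\pi(n+1)>8\sqrt{2n}$ reduces to $\pi(n+1)>2\sqrt n$, which holds for all $n\ge1$. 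What your approach buys is uniformity in $\omega$ (no case split on $\omega\gtrless 1/\sqrt n$) and, as you observe, a sharper constant of order $\sqrt n$ rather than $n+1$; what the paper's approach buys is complete elementarity, avoiding any covolume computation. One bookkeeping caution, which you already flag yourself: in the line $|\ZZ_\sigma(v)|^2=\omega^2\|v_P\|^2$ the symbol $\omega^2$ denotes the self-intersection of the class $\omega H$, i.e.\ $2n$ times the square of the scalar $\omega$ appearing in the denominator $4n\omega^2$; writing $(\omega H)^2=2n\omega^2$ consistently would remove any ambiguity.
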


We will prove these two propositions in the next two subsections.

%%%%%%%%%%%%%%%%%%%%%%%%%%%%%%
\subsection{Systolic inequality for K3 surfaces of Picard rank one}

We prove Proposition \ref{prop:latticeThm2} in this subsection. Note that the method in this proof does not work for K3 surfaces with higher Picard rank, due to the indefiniteness of the intersection pairing on $\NS(X)$ for $\rho(X)>1$.

In order to find such triple $(r,d,s)$ in Proposition \ref{prop:latticeThm2} for small $\omega$, we need the following technical lemma.

\begin{Lem}\label{Lem:est}
For any real number $\beta$ and any $0<\omega<\frac{1}{\sqrt n}$,
there exists integers $(r,d,s)$ such that:
$$
1\leq r\leq\frac{1}{\sqrt n\omega},
$$
$$
|s+2n\beta d+ n\beta^2 r| < \sqrt n\omega,
$$
$$
0\leq nd^2-sr\leq n.
$$
\end{Lem}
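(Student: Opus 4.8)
The plan is to produce the triple $(r,d,s)$ by a two-step approximation argument, first choosing $r$ and $d$ so that the quantity $2n\beta d + n\beta^2 r$ is well-approximated by an integer, and then choosing $s$ to be (minus) the nearest integer to it. First I would invoke a pigeonhole/Dirichlet-type argument: for the given $\omega$ with $0<\omega<\tfrac1{\sqrt n}$, set $N \coloneqq \lfloor 1/(\sqrt n\,\omega)\rfloor \geq 1$, and among the $N+1$ real numbers $\{ n\beta^2 r \bmod 1 : 0\le r\le N\}$ — or more precisely the pair $(n\beta r, n\beta^2 r)$ reduced appropriately — find two indices whose fractional parts are within $1/N$. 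Actually the cleanest route: apply simultaneous Dirichlet approximation to the two reals $2n\beta$ and $n\beta^2$ to find $1\le r\le N$ and integers $d,s$ with $|2n\beta r - \text{(something)}|$ small; but since $d$ is a free integer we really only need to approximate $n\beta^2 r$ modulo the subgroup $2n\beta\Z + \Z$. So I would instead argue directly: consider the $N+1$ points $n\beta^2 r \bmod (2n\beta\Z+\Z)$ — this is delicate because $2n\beta\Z+\Z$ may be dense. Cleaner still is to do it in one variable: reduce $\{ r\cdot(2n\beta, n\beta^2) \}$ in $(\R/\Z)^2$ and use pigeonhole on a grid of boxes of side $\sqrt n\,\omega$; since there are at most $1/(\sqrt n\,\omega)^2$ such boxes this needs $N\gtrsim 1/(\sqrt n\,\omega)^2$, which is too many. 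The right move is the one-dimensional one: pick $r$ and $d$ so that $2n\beta d + n\beta^2 r$ lies within $\sqrt n\,\omega$ of an integer $-s$; this is possible for some $1\le r\le N$ by a one-dimensional pigeonhole on the $N+1$ values $n\beta^2 r \bmod 1$ combined with the freedom in $d$ to shift by $2n\beta d$ — here I would simply take $d=0$ first and get $|s + n\beta^2 r| < \tfrac1N \le \sqrt n\,\omega$ from Dirichlet's theorem applied to the single real $n\beta^2$. Wait: with $d=0$ the middle term $2n\beta d$ vanishes, so I only control $n\beta^2 r$, not $2n\beta d + n\beta^2 r$; but the statement has all three, so taking $d=0$ is legitimate and the inequality $|s + n\beta^2 r| < \sqrt n\,\omega$ is exactly what's wanted.

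So the concrete plan: \textbf{Step 1.} Apply Dirichlet's approximation theorem to the real number $n\beta^2$ with denominator bound $N = \lfloor 1/(\sqrt n\,\omega)\rfloor$: there exist integers $r,s$ with $1\le r\le N$ and $|n\beta^2 r + s| < 1/N \le \sqrt n\,\omega$. Set $d=0$. Then conditions one and two of the lemma hold: $1\le r\le N\le 1/(\sqrt n\,\omega)$, and $|s + 2n\beta\cdot 0 + n\beta^2 r| < \sqrt n\,\omega$. \textbf{Step 2.} Verify the third condition $0\le nd^2 - sr \le n$, which with $d=0$ becomes $0\le -sr \le n$. From $|n\beta^2 r + s| < \sqrt n\,\omega$ we get $s = -n\beta^2 r + \theta$ with $|\theta|<\sqrt n\,\omega$, so $-sr = n\beta^2 r^2 - \theta r \ge n\beta^2 r^2 - |\theta| r > -\sqrt n\,\omega \cdot N \ge -1$; since $-sr$ is an integer, $-sr \ge 0$. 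For the upper bound: $-sr = n\beta^2 r^2 - \theta r \le n\beta^2 r^2 + \sqrt n\,\omega\, r$. This is not obviously $\le n$ unless $\beta$ is small, so the naive choice $d=0$ fails condition three when $|\beta|$ is large.

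This is where the real work — and the main obstacle — lies: controlling $nd^2 - sr$ from above. The fix is to not take $d=0$ but to choose $d$ so that the genuine quantity being approximated is $2n\beta d + n\beta^2 r = n\beta(2d + \beta r)$, and to pick $d = d(r)$ so that $2d + \beta r$ is close to zero, i.e. $d$ is (minus) the nearest integer to $\beta r/2$; then $|2d+\beta r|\le 1$ and the middle term is $O(n|\beta|)$, still too big. The genuinely correct approach — following the structure of Proposition \ref{prop:latticeThm2}, whose form $nd^2 - rs$ is a norm form — is to interpret $nd^2 - rs$ together with $s + 2n\beta d + n\beta^2 r$ as the two "coordinates" of a lattice point under the action of the parabolic/unipotent matrix encoding $\beta$, so that applying the shear by $\beta$ reduces the problem to finding a short vector in a fixed rank-$3$ lattice — and then the required triple comes from Minkowski's theorem or, as here, from a direct small-$\omega$ pigeonhole once the shear has absorbed $\beta$. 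Concretely I would substitute $d' = d + (\text{integer part adjusting } \beta)$, $s' = s + (\dots)$ to reduce to the case $\beta \in [0,1)$ or even to $\beta$ in a bounded fundamental domain, using that the transformation $(r,d,s)\mapsto(r, d+kr, s - 2nk d - nk^2 r)$ for $k\in\Z$ preserves both $nd^2-rs$ and the shape of the quadratic expression while replacing $\beta$ by $\beta - 2k$ — hence I may assume $|\beta|\le 1$. Once $|\beta|\le 1$, the bound $-sr \le n\beta^2 r^2 + \sqrt n\,\omega\,r \le n r^2 \cdot \beta^2 + r$; this still has an $nr^2$ which is large, so even this is insufficient and one must additionally keep $r$ small — but $r$ small conflicts with needing $r\le N$ large for a good approximation. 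The resolution must be that one does \emph{not} take $r$ as large as $N$ but rather takes the \emph{first} good $r$, which by the three-distance / continued-fraction structure of the approximants to $n\beta^2$ can be taken with $r\le N$ \emph{and} $|s| \le$ something controlled; I expect the actual proof chooses $(r,d,s)$ as a convergent of a continued fraction expansion (of $\beta$ or of $n\beta^2$) so that all three quantities are simultaneously small, and the bound $0\le nd^2-rs\le n$ falls out of the standard estimates $|q_k \alpha - p_k| < 1/q_{k+1}$ for convergents. Thus the main obstacle is packaging the approximation so that the \emph{quadratic} quantity $nd^2 - rs$ — not just the linear one — stays in $[0,n]$; I anticipate this requires choosing $r,d$ to be numerator/denominator of a continued-fraction convergent of $\beta$ (making $|2d+\beta r|$ of size $1/r$, hence the middle term of size $n/r$, hence $-sr$ of size $n$), and then reading off $s$, after which conditions one through three are all verified by elementary inequalities.
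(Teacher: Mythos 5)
Your proposal does not reach a proof: each of the three routes you sketch is either abandoned (the $d=0$ Dirichlet argument, which you correctly note fails for large $|\beta|$), left incomplete (the shear reduction to bounded $\beta$, which you yourself observe is still insufficient), or does not work as stated (the continued-fraction ending). The concrete gap is that you never produce a single choice of $(r,d,s)$ for which the linear quantity $\epsilon \coloneqq s+2n\beta d+n\beta^2 r$ and the auxiliary quantity $d+\beta r$ are \emph{simultaneously} small --- and both are needed, because of the identity $nd^2-sr=n(d+\beta r)^2-r\epsilon$, which is the crux of the lemma: once $|d+\beta r|<1$, $|\epsilon|<\sqrt{n}\,\omega$ and $1\le r\le 1/(\sqrt{n}\,\omega)$, one gets $-1<nd^2-sr<n+1$, and integrality finishes the argument. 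The paper achieves the simultaneous smallness by a two-step pigeonhole: set $l=\lfloor 1/(\sqrt{n}\,\omega)\rfloor+1$; for each $1\le j\le l$ let $d_j$ be the integer with $-\tfrac12<d_j+j\beta\le\tfrac12$; among the $l$ numbers $2n\beta d_j+n\beta^2 j$ modulo $1$ find two within $1/l$ of each other; then the differences $r=j-k$, $d=d_j-d_k$ satisfy $|d+\beta r|<1$ automatically, and $s$ is read off as the nearest integer to $-2n\beta d-n\beta^2 r$, giving $|\epsilon|\le 1/l<\sqrt{n}\,\omega$ and hence $|r\epsilon|<1$.

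Your final suggestion --- take $(r,d)$ from a continued-fraction convergent of $\beta$ --- cannot substitute for this. A convergent controls $|d+\beta r|$ (not $|2d+\beta r|$, as you wrote), but it gives no control whatsoever on the residue of $2n\beta d+n\beta^2 r$ modulo $1$, which must be made smaller than $\sqrt{n}\,\omega$, a quantity that can be arbitrarily small and is unrelated to the Diophantine properties of $\beta$. That is exactly why one must search over all $r$ up to roughly $1/(\sqrt{n}\,\omega)$ and why a pigeonhole, applied to the correctly prepared sequence $2n\beta d_j+n\beta^2 j$ rather than to $n\beta^2 j$ alone, is the right tool. Your earlier remark that one should exploit ``the freedom in $d$ to shift by $2n\beta d$'' pointed in the right direction, but, as you noticed, the shift group $2n\beta\Z+\Z$ may be dense, so $d$ cannot be left free in the pigeonhole; it must be pinned down in advance by the condition $|d_j+j\beta|\le\tfrac12$, with the approximation then extracted from differences so that both required bounds survive.
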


\begin{proof}
Let $l=\lfloor\frac{1}{\sqrt n\omega}\rfloor+1$.
For each $1\leq j\leq l$, choose $d_j\in\Z$ such that
$$
-\frac{1}{2} < d_j+ j\beta \leq \frac{1}{2}.
$$

Consider the real numbers $\{2n\beta d_j + n\beta^2 j\}_{1\leq j\leq l}$ modulo 1.
There is at least a pair
$(2n\beta d_j + n\beta^2 j, 2n\beta d_k + n\beta^2 k)$
has distance less than or equals to $1/l$ modulo 1.
Say $j>k$ without loss of generality.
We choose $r=j-k$, $d=d_j-d_k$, and choose $s$ to be the integer closest to
$-2n\beta d - n \beta^2 r$.
Then
$$
1\leq r\leq \lfloor\frac{1}{\sqrt n\omega}\rfloor
$$
and
$$
|s+2n\beta d+ n\beta^2 r| \leq\frac{1}{\lfloor\frac{1}{\sqrt n\omega}\rfloor+1}
< \sqrt n\omega.
$$

Let $\epsilon=s+2n\beta d+ n\beta^2 r$. Then
\begin{align}
nd^2-sr & = nd^2- (- 2n\beta d- n\beta^2 r+\epsilon)r  \notag \\
 & =  n(d+r\beta)^2 - r\epsilon. \notag 
\end{align}

We have
$$
(d+r\beta)^2 = ((d_j+j\beta)-(d_k+k\beta))^2 < 1
$$
and
$$
|r\epsilon| < \frac{1}{\sqrt n\omega} \cdot \sqrt n\omega = 1.
$$

Hence $-1 < nd^2-sr < n+1$. Since it is an integer, thus $0\leq nd^2-sr\leq n$.
\end{proof}

We can now prove Proposition \ref{prop:latticeThm2}.

\begin{proof}[Proof of Proposition \ref{prop:latticeThm2}]
If $\omega\geq\frac{1}{\sqrt n}$, one can simply take the class of skyscraper sheaves $(r,d,s)=(0,0,1)$ and check that it satisfies the required conditions.
If $\omega<\frac{1}{\sqrt n}$, we choose $(r,d,s)$ as in Lemma \ref{Lem:est}. Then it satisfies $sr < nd^2+1$ and
\begin{align}
\frac{|s + 2n(\beta+i\omega)d + n(\beta+i\omega)^2 r|^2}{4n\omega^2} 
& =
\frac{1}{4n}\Big(\frac{s+2n\beta d+n\beta^2 r}{\omega} + n\omega r\Big)^2 + (nd^2-rs)
\notag \\
 &
<\frac{1}{4n}(\sqrt n+\sqrt n)^2+n=n+1.
\notag
\end{align}
\end{proof}

\begin{Rmk}[Spherical systole]\label{Rmk:Sph}
The notion of \emph{spherical objects} in a triangulated category was introduced by Seidel and Thomas \cite{ST}.
These objects are the categorical analogue of Lagrangian spheres in derived Fukaya categories.
%An object $S\in\D^b\Coh(X)$ in the derived category of coherent sheaves on a Calabi--Yau $n$-fold $X$ is called spherical if
%$\Hom^*_\D(S, S)=\C\oplus\C[-n]$.
One can define the \emph{spherical systole} of a Bridgeland stability condition $\sigma$
on a triangulated category $\D$ as
the minimum among the masses of spherical objects:
$$
\sys_\sph(\sigma) \coloneqq \min \{ m_\sigma(S): S \text{ is a spherical object in } \D \}.
$$
\end{Rmk}

It is not hard to show that the notions of spherical systole and categorical systole coincide for stability conditions on elliptic curves. However, this is not true for the derived categories of K3 surfaces. The following proposition shows that the systolic inequality does not hold for spherical systole on K3 surfaces.

\begin{Prop}\label{Prop:Sph}
Let $X$ be a K3 surface of Picard rank one and $\D=\D^b\Coh(X)$ be its derived category of coherent sheaves.
Then
$$
\sup_{\sigma\in\Stab^\dagger(\D)}
\frac{\sys_\sph(\sigma)^2}{\vol(\sigma)} = + \infty.
$$
\end{Prop}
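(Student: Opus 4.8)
The plan is to exhibit an explicit one-parameter family of stability conditions in $\Stab^\dagger(\D)$ along which the ratio $\sys_\sph(\sigma)^2/\vol(\sigma)$ diverges. Since $\NS(X)=\Z H$ with $H^2=2n$, the numerical Grothendieck group is $\NN(\D)\cong\Z\oplus\Z H\oplus\Z$ with Mukai pairing $((r_1,d_1H,s_1),(r_2,d_2H,s_2))=2n d_1 d_2-r_1s_2-r_2s_1$, and a Mukai vector $v=(r,dH,s)$ is spherical precisely when $v^2=2nd^2-2rs=-2$, i.e. $rs=nd^2+1$. I will work with the geometric stability conditions $\sigma_{\beta,\omega}\in V(\D)$ with central charge $\ZZ(v)=(\exp(\beta H+i\omega H),v)$ for $\beta\in\R$, $\omega>0$; by Example \ref{eg:K3} these have $\vol(\sigma_{\beta,\omega})=2\omega^2\cdot H^2=4n\omega^2$, so I want to drive $\omega\to 0$ while keeping the mass of every spherical object bounded below.

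The key computation is to control $|\ZZ_{\sigma_{\beta,\omega}}(v)|$ for spherical $v$. Writing $v=(r,dH,s)$ with $rs=nd^2+1$, one gets, exactly as in the displayed formula preceding Proposition \ref{prop:latticeThm2},
\[
|\ZZ_{\sigma_{\beta,\omega}}(v)|^2 = \bigl(s+2n\beta d+n(\beta^2-\omega^2)r\bigr)^2 + 4n^2\omega^2(d+r\beta)^2.
\]
The first step is to choose $\beta$ irrational and badly approximable — concretely, take $\beta$ with bounded partial quotients, so that $|d+r\beta|\geq c/|r|$ for all $(r,d)\neq(0,0)$ and some $c=c(\beta)>0$. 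Then for any spherical $v$ with $r\neq 0$ the second term alone gives $|\ZZ_{\sigma_{\beta,\omega}}(v)|^2\geq 4n^2\omega^2 c^2/r^2$; this is useless by itself (it decays in $r$), so the point of the argument is to show that a spherical object with $r$ large must also have large real part, OR to instead bound $\sys_\sph$ from below by a quantity that does not go to zero. The honest approach is: for fixed $\beta$, as $\omega\to 0$ the real part $s+2n\beta d+n\beta^2 r - n\omega^2 r$ converges to $s+2n\beta d + n\beta^2 r = N_{\beta}(v)$, a quadratic form that is positive definite on the real locus $rs=nd^2$ but on the sphere $rs=nd^2+1$ satisfies $N_\beta(v)\cdot r = n(d+r\beta)^2 r^2 \cdot r^{-1}+\dots$; more precisely $r\cdot N_\beta(v)=n(d+r\beta)^2 r + (rs-nd^2) = n r(d+r\beta)^2 + 1 \geq 1$, so $|N_\beta(v)|\geq 1/|r|$ when $N_\beta(v)>0$ and in fact $|N_\beta(v)|\geq n(d+r\beta)^2\geq nc^2/r^2$ when it could be negative. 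Combining the two lower bounds $|\mathrm{Re}|\gtrsim 1/|r|$ (for small $\omega$) and $|\mathrm{Im}|\gtrsim \omega/|r|$ still only gives a bound decaying in $r$; the resolution is that for the ratio we need $\sys_\sph^2/(4n\omega^2)\to\infty$, i.e. $\sys_\sph^2\gg\omega^2$, and we are free to first fix $\omega$ small and then note that spherical objects with very large $r$ have mass bounded below by the $r=O(1/\omega)$ ones — so one restricts attention to a finite window of $r$.

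Therefore the concrete scheme is: (1) record the spherical condition $rs=nd^2+1$ and the central-charge formula above; (2) fix $\beta$ badly approximable with constant $c$; (3) split spherical vectors into those with $|r|\leq 1/(\sqrt{n}\,\omega)$ and those with $|r|>1/(\sqrt n\,\omega)$, showing the second class has $|\mathrm{Im}\,\ZZ|$-part $4n^2\omega^2(d+r\beta)^2$ — which is too small — so instead bound the second class using $rs=nd^2+1>0$ together with $|\mathrm{Re}\,\ZZ|$; and (4) conclude $\sys_\sph(\sigma_{\beta,\omega})\geq \kappa(\beta,n)$ for a constant independent of $\omega$, hence $\sys_\sph^2/\vol = \sys_\sph^2/(4n\omega^2)\to\infty$ as $\omega\to 0^+$, and finally invoke Theorem \ref{thm:BridgelandK3}(1) so that $\sigma_{\beta,\omega}\in V(\D)\subset\Stab^\dagger(\D)$ (one must check $\exp(\beta H+i\omega H)\in\mathcal L(\D)$, i.e. $(\Omega,\delta)\notin\R_{\leq 0}$ for spherical $\delta$ with $r>0$ — but this is exactly the positivity $\mathrm{Re}\,\ZZ>0$ or $\mathrm{Im}\,\ZZ>0$ we are already controlling, and is where badly-approximable $\beta$ is essential).

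The main obstacle, and the delicate heart of the argument, is step (3)–(4): showing that $\inf_\omega \sys_\sph(\sigma_{\beta,\omega})>0$, i.e. that no spherical object can have small mass simultaneously in both real and imaginary parts, uniformly as $\omega\to 0$. The difficulty is that naively the imaginary part $2n\omega|d+r\beta|$ can be made small by taking $r$ large with $d/r$ approximating $-\beta$, so one genuinely needs the arithmetic input that $\beta$ is badly approximable to force $|d+r\beta|\gtrsim 1/|r|$, \emph{and} one needs to simultaneously use the quadratic constraint $rs-nd^2=1$ to prevent the real part from also collapsing — the real part is $\approx n(d+r\beta)^2 + 1/r$ times a bounded factor, so it is $\gtrsim 1/r$, and the product $|\mathrm{Re}|\cdot|\mathrm{Im}|\gtrsim \omega/r^2$ is still not obviously bounded below, which is why the finite-window reduction (only finitely many $r$ matter for fixed small $\omega$, and on that window compactness gives a uniform bound after one more careful estimate) is the crux. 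I expect the author's proof to package this more cleanly, possibly by a direct estimate showing $\sys_\sph(\sigma_{\beta,\omega})$ stays near $\min(1,\text{dist of }\beta\text{ to }\tfrac1n\Z)$-type quantity, but the badly-approximable choice of $\beta$ will be the indispensable ingredient.
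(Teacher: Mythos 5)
Your overall strategy---drive $\vol(\sigma)=4n\omega^2$ to zero while keeping $\sys_\sph$ bounded below---is a legitimate way one could try to prove unboundedness, but as written it has a genuine gap exactly where you flag it: the uniform lower bound $\inf_{\omega>0}\sys_\sph(\sigma_{\beta,\omega})>0$ in steps (3)--(4) is never established. For a spherical class $v=(r,dH,s)$ one has $rs=nd^2+1$ and, writing $t=d+r\beta$, the identity $r\cdot\mathrm{Re}\,\ZZ(v)=nt^2+1-n\omega^2r^2$; taking $r$ of size roughly $\sqrt{(nt^2+1)/n}\,/\omega$ makes the real part small, while badly approximable $\beta$ only gives $|\mathrm{Im}\,\ZZ(v)|=2n\omega|t|\geq 2nc\omega/|r|\sim\omega^2\to0$. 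So neither part is bounded below by the estimates you actually have; the only thing that could prevent $\sys_\sph(\sigma_{\beta,\omega})\to0$ is the integrality constraint $r\mid nd^2+1$, and your ``finite window plus compactness'' remark does not engage with it (the window $|r|\lesssim 1/(\sqrt n\,\omega)$ grows as $\omega\to0$, so no compactness argument over it yields a bound uniform in $\omega$). This is a delicate Diophantine non-existence statement, considerably harder than the proposition itself, and you concede in your final paragraph that it is left open.

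The paper avoids all of this by going to the opposite limit. It takes $\beta=0$ and $\omega>1$ (so the central charge lies in $\mathcal{L}(\D)$ and the stability condition is geometric, hence in $\Stab^\dagger(\D)$), and uses only that a spherical class satisfies $rs=nd^2+1\geq1$, so $r$ and $s$ are nonzero of the same sign. Then
\[
\frac{|\ZZ(v)|^2}{4n\omega^2}\;=\;\frac{1}{4n}\Big(\frac{s}{\omega}+n\omega r\Big)^2-1\;\geq\;\frac{1}{4n}\Big(\frac{1}{\omega}+n\omega\Big)^2-1\;\xrightarrow[\ \omega\to+\infty\ ]{}\;+\infty,
\]
and since $m_\sigma(S)\geq|\ZZ(v(S))|$ this bounds $\sys_\sph(\sigma)^2/\vol(\sigma)$ from below by a quantity tending to infinity. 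No arithmetic input is needed: the whole point is that spherical classes can never have $r=s=0$ (unlike the classes with $v^2\geq0$ that control the ordinary systole), and at large volume every such class has large central charge. If you want to salvage the $\omega\to0$ route you would first have to prove the missing lower bound on $\sys_\sph$, which is not done here and may well fail for your choice of $\beta$.
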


\begin{proof}
Let $\NS(X)=\Z H$ and $H^2=2n$.
Let $\omega H\in\NS(X)\otimes\R$ be an ample class.
Then $\ZZ=(\exp(i\omega H), -)$ gives the central charge of a stability condition in $\Stab^\dagger(\D)$ if $\omega>1$ (\cite[Lemma 6.2]{BriK3}).
By Proposition \ref{Prop:key} and Example \ref{eg:K3},
$$
\sup_{\sigma\in\Stab^\dagger(\D)}
\frac{\sys_\sph(\sigma)^2}{\vol(\sigma)} 
\geq
\sup_{\omega > 1}
\frac{\min\{ |(\exp(i\omega H), v)|^2 : v^2=-2    \}}
{4n\omega^2}.
$$
Here we use the fact that
the Mukai vector of a spherical object $S$ satisfies
$
v(S)^2 = - \hom^0(S,S) + \hom^1(S,S) - \hom^2(S,S) = -2.
$

Let $v=(r, dH, s) \in \Z \oplus \NS(X) \oplus \Z$ be a vector
satisfying $v^2 = 2nd^2 - 2rs = -2$.
Then $rs\neq0$ and 
$r,s$ are both positive or both negative.
Hence
\begin{align}
\sup_{\sigma\in\Stab^\dagger(\D)}
\frac{\sys_\sph(\sigma)^2}{\vol(\sigma)} 
&
\geq
\sup_{\omega > 1}
\frac{\min\{ |(\exp(i\omega H), v)|^2 : v^2=-2    \}}
{4n\omega^2}
\notag \\
 &
=
\sup_{\omega>1}
\min_{nd^2-rs=-1}
\frac{1}{4n} \Big( \frac{s}{\omega} + n\omega r \Big)^2 -1
\notag \\
 &
\geq
\sup_{\omega>1}
\frac{1}{4n} \Big( \frac{1}{\omega} + n\omega  \Big)^2 -1
= + \infty.
\notag
\end{align}

\end{proof}

\begin{Rmk}
In the proof of Proposition \ref{prop:latticeThm2},
we do not make use of the Mukai vectors of spherical objects.
Recall that
$$
\sys(\sigma) = \min \{ | \ZZ_\sigma(v) | : v^2 \geq -2, v\neq 0 \},
$$
but we only use the Mukai vectors that satisfy $0\leq v^2 \leq 2n$ to prove the systolic inequality (c.f.~Lemma \ref{Lem:est}).
\end{Rmk}

%%%%%%%%%%%%%%%%%%%%%%%%%%%%%%
\subsection{Systolic inequality for general K3 surfaces}

We prove Proposition \ref{prop:latticeThm1} in this subsection. The proof is based on a classical result on the existence of integer points by Minkowski.

\begin{Thm}[Minkowski]
Every convex set in $\R^n$ which is symmetric with respect to the origin and has volume greater than $2^n$ contains a non-zero integer point.
\end{Thm}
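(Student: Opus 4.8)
The plan is to prove Minkowski's convex body theorem by the classical reduction to a volume-counting (pigeonhole) argument on the fundamental domain $[0,1)^n$, combined with convexity and central symmetry. Write $K\subset\R^n$ for the given convex, centrally symmetric set with $\vol(K)>2^n$. First I would rescale by setting $K'=\tfrac12 K=\{x/2:x\in K\}$, which is again convex and centrally symmetric and satisfies $\vol(K')=2^{-n}\vol(K)>1$. The whole theorem then reduces to producing two distinct points of $K'$ whose difference is a nonzero integer vector.

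The core step is Blichfeldt's lemma: any Lebesgue-measurable set $K'$ with $\vol(K')>1$ contains two distinct points $x,y$ with $x-y\in\Z^n\setminus\{0\}$; note that convex sets are automatically measurable, so this applies. To prove it I would decompose $K'$ along the lattice: for each $v\in\Z^n$ put $K'_v=K'\cap([0,1)^n+v)$, so the $K'_v$ partition $K'$ and $\sum_{v\in\Z^n}\vol(K'_v)=\vol(K')>1$. After translating each piece back, the sets $K'_v-v$ all lie in the unit cube $[0,1)^n$, which has volume $1$; since their total volume exceeds $1$, they cannot be pairwise disjoint. Hence there are distinct $v,w\in\Z^n$ and a common point $p\in(K'_v-v)\cap(K'_w-w)$, and then $x:=p+v$ and $y:=p+w$ are distinct points of $K'$ with $x-y=v-w\in\Z^n\setminus\{0\}$. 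I expect this measure-theoretic pigeonhole to be the only delicate point: it hinges on the strict inequality $\vol(K')>1$ forcing overlap, which in turn rests on countable additivity of Lebesgue measure and on the exact tiling of $\R^n$ by integer translates of $[0,1)^n$.

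Finally I would pass back from $K'$ to $K$ using convexity and symmetry. Since $x,y\in K'=\tfrac12 K$, both $2x,2y\in K$; central symmetry gives $-2y\in K$; and convexity yields the midpoint $\tfrac12\bigl(2x+(-2y)\bigr)=x-y\in K$. As $x-y$ is a nonzero element of $\Z^n$, it is the required nonzero integer point of $K$, completing the proof. The rescaling and this convexity/symmetry step are entirely routine, so essentially all the content lives in Blichfeldt's lemma.
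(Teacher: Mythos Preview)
Your proof is correct and is the standard Blichfeldt--Minkowski argument. Note, however, that the paper does not actually prove this theorem: it is quoted as a classical result attributed to Minkowski and used as a black box in the proof of Proposition~\ref{prop:latticeThm1}. So there is no ``paper's own proof'' to compare against; you have supplied the well-known proof that the paper omits.
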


\begin{proof}[Proof of Proposition \ref{prop:latticeThm1}]
We fix an identification $\NS(X)\cong\Z^\rho$.
%and let $Q$ be the quadratic form on $\R^\rho$ given by the intersection pairing on the middle cohomology of $X$.
For any $\omega\in\NS(X)\otimes\R$ with $\omega^2>0$, the intersection pairing restricts on $\langle\omega\rangle^\perp\subset\R^\rho$ is negative definite by Hodge index theorem. Choose
\[
D_1,\cdots,D_{\rho-1}\in\langle\omega\rangle^\perp\subset\R^\rho
\]
such that $D_i.D_j=0$ for all $i\neq j$, and $D_i^2=-1$ for all $i$.

Consider the following vectors in $\R^{\rho+2}\cong\R\oplus(\NS(X)\otimes\R)\oplus\R$:
\[
\begin{array}{rl}
v_1 & = \sqrt2\cdot (0, D_1, -\beta.D_1), \\
v_2 & =  \sqrt2\cdot  (0, D_2, -\beta.D_2), \\
... \\
v_{\rho-1} & =  \sqrt2\cdot  (0, D_{\rho-1}, -\beta.D_{\rho-1}),  \\
v_{\rho} & =  \sqrt2\cdot ( \frac{1}{\sqrt{\omega^2}}, \frac{-\beta}{\sqrt{\omega^2}},  \frac{\beta^2+\omega^2}{2\sqrt{\omega^2}}), \\
v_{\rho+1} & =  C\cdot (0, \frac{\omega}{\sqrt{\omega^2}},  \frac{-\beta.\omega}{\sqrt{\omega^2}}  ),\\
v_{\rho+2} & =  \sqrt2\cdot  (0,  0,  \sqrt{\omega^2}),
\end{array}
\]
where $C$ is any number larger than $\frac{(\rho+2)!\sqrt{|\disc\NS(X)|}}{2^{(\rho+1)/2}}$.

Consider the following convex set in $\R^{\rho+2}$ which is symmetric with respect to the origin:
\[
    \cC:=\{ \sum_{i=1}^{\rho+2} e_iv_i: \sum_{i=1}^{\rho+2}|e_i|\leq1\}.
\]
One can check that the volume of $\cC$ is greater than $2^{\rho+2}$.
Therefore to prove Proposition \ref{prop:latticeThm1}, it suffices to show that any vector in $\cC$ satisfies the following conditions:
\begin{enumerate}[label=(\alph*)]
    \item $|\omega.(D+r\beta)|\leq C\sqrt{\omega^2}$;
    \item $|s+\beta.D+\frac{1}{2}(\beta^2-\omega^2)r|\leq \sqrt{2\omega^2}$;
    \item $D^2-2rs\geq-2$.
\end{enumerate}

Observe that the vectors $v_1,v_2,\ldots,v_{\rho+2}$ satisfy all three conditions. Since the first two conditions are linear in $(r,D,s)\in\R^{\rho+2}$, hence are satisfied by all the vectors in $\cC$.

Let $(r,D,s)=\sum_{i=1}^{\rho+2} e_iv_i\in\cC$. Then
\begin{align*}
D^2-2rs & =e_{\rho+1}^2C^2 - 2(e_1^2+\cdots+e_\rho^2)-4e_\rho e_{\rho+2} \\
& \geq -2(e_1^2+\cdots+e_{\rho-1}^2)-2(e_\rho+e_{\rho+2})^2 \\
& \geq -2
\end{align*}
since $\sum_{i=1}^{\rho+2}|e_i|\leq1$.
This concludes the proof of Proposition \ref{prop:latticeThm1} by Minkowski's theorem.
\end{proof}

%Consider
%\begin{multline*}
%(e_1A_1D_1+\cdots+e_{\rho-1}A_{\rho-1}D_{\rho-1}+e_{\rho}A_\rho\frac{-B}{\sqrt{\omega^2}} + e_{\rho+1}C_1 \frac{\omega}{\sqrt{\omega^2}})^2\\ 
%- 2(e_\rho A_\rho\frac{1}{\sqrt{\omega^2}})
%(-e_1A_1B.D_1-\cdots-e_{\rho-1}A_{\rho-1}B.D_{\rho-1}+e_\rho A_\rho\frac{B^2+\omega^2}{2\sqrt{\omega^2}}
%+e_{\rho+1}C_1\frac{-B.\omega}{\sqrt{\omega^2}}
%+e_{\rho+2}C_2\sqrt{\omega^2})
%\end{multline*}
%\[
%=e_{\rho+1}^2C_1^2 - (e_1^2A_1^2+\cdots+e_\rho^2A_\rho^2)-2e_\rho e_{\rho+2} A_\rho C_2
%\]

%For instance, take $A_1=\cdots=A_{\rho-1}=\sqrt2$, $A_\rho=1$, $C_2=2$. Then
%\[
%\begin{array}{ll}
%& e_1^2A_1^2+\cdots+e_\rho^2A_\rho^2 + 2e_\rho e_{\rho+2} A_\rho C_2 \\
%=&2(e_1^2+\cdots+e_{\rho-1}^2)+e_\rho^2+4e_\rho e_{\rho+2} \\
%\leq&2(|e_1|+\cdots+|e_{\rho-1}|)+|e_\rho|+|e_{\rho}+e_{\rho+2}| <2.
%\end{array}
%\]

%%%%% Future %%%%%
\section{Future studies}
\label{sec:future}

\subsection*{Applications in symplectic geometry}

Recall the two questions we proposed in the introduction.

\begin{Question}[see Question \ref{Q:A}]
\label{Q:A-5}
Let $Y$ be a Calabi--Yau manifold and $\omega$ be a symplectic form on $Y$.
Does there exist a constant $C=C(Y,\omega)>0$ such that
$$
\sys(Y,\omega,\Omega)^2
 \leq C\cdot  \Big|\int_Y\Omega\wedge\overline\Omega\Big|
$$
holds for any holomorphic top form $\Omega$ on $Y$?
\end{Question}

\begin{Question}[see Question \ref{Q:B}]
\label{Q:B-5}
Let $X$ be a Calabi--Yau manifold and $\Omega$ be a complex structure on $X$.
Let $\D=\D^b\Coh(X, \Omega)$ be its derived category of coherent sheaves.
Does there exist a constant $C=C(X, \Omega)>0$ such that
$$
\sys(\sigma)^2 \leq C \cdot
\vol(\sigma)
$$
holds for any $\sigma\in\Stab^\dagger(\D)$?
%Here $\Stab^\dagger(\D)$ denotes the distinguished connected component of $\Stab(\D)$ that contains geometric stability conditions.
\end{Question}

In the present article, we give an affirmative answer to Question \ref{Q:B-5} for any complex projective K3 surface $X$, and find an explicit systolic constant $C$ which depends only on the rank and discriminant of $\NS(X)$. Now we discuss how this result can be used to answer Question \ref{Q:A-5}, via the homological mirror symmetry conjecture proposed by Kontsevich.

\begin{Conj}[\cite{Kon}]
\label{conj:HMS}
For any Calabi--Yau manifold with a symplectic structure $(Y,\omega)$ , there exists a Calabi--Yau manifold with a complex structure $(X,\Omega)$ such that there is an equivalence between triangulated categories
\[
\D^b\Fuk(Y,\omega)\cong\D^b\Coh(X,\Omega).
\]
\end{Conj}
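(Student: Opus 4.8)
The plan is to follow the Strominger--Yau--Zaslow (SYZ) philosophy, which simultaneously predicts the mirror $(X,\Omega)$ and supplies the geometric mechanism underlying the equivalence. First I would produce a (special) Lagrangian torus fibration $f\colon Y\to B$ with singular fibers over a codimension-two locus $\Delta\subset B$; over the smooth locus $B_0=B\bs\Delta$ the fibers are Lagrangian tori, and one builds a candidate mirror $X_0$ as the total space of the dual torus fibration $\check f\colon X_0\to B_0$, whose points parametrize pairs consisting of a fiber $L_b$ and a flat $U(1)$-connection $\nabla_b$ on it. The complex structure $\Omega$ on the mirror is then read off from the symplectic form and $B$-field on $Y$, and the compactification of $X_0$ across $\Delta$ is dictated by holomorphic disk counts, i.e.\ the wall-crossing and scattering corrections of Kontsevich--Soibelman and Gross--Siebert.

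Second I would construct the comparison functor via \emph{family Floer theory} in the sense of Fukaya, Abouzaid, and Tu: to a Lagrangian brane $L\subset Y$ one associates the sheaf $\mathcal F(L)$ on $X$ whose stalk over a point $(L_b,\nabla_b)\in X_0$ is the Floer cohomology $HF^\ast(L,(L_b,\nabla_b))$, with transition data over $B_0$ furnished by continuation maps and counts of holomorphic strips. One checks that this assignment upgrades to a cohomologically unital $A_\infty$-functor $\mathcal F\colon\D^b\Fuk(Y,\omega)\ra\D^b\Coh(X,\Omega)$ at the chain level, working a priori over the Novikov field and — crucially for the systolic application, which lives over $\C$ — establishing convergence of the Novikov parameter in the cases at hand (e.g.\ K3 surfaces).

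Third I would prove that $\mathcal F$ is an equivalence. Full faithfulness amounts to identifying $HF^\ast(L_0,L_1)$ with $\Hom^\ast_{\D^b\Coh(X)}(\mathcal F(L_0),\mathcal F(L_1))$; for transversely intersecting Lagrangians this reduces to a local model computation near each intersection point together with a degeneration/spectral sequence argument comparing the two gradings. Essential surjectivity then follows once one shows the image split-generates $\D^b\Coh(X)$: the fiber tori map to skyscraper sheaves of points, and together with the image of a suitable Lagrangian section (a line bundle on $X$) these generate the derived category of a K3 by a Beilinson/Orlov-type argument.

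The main obstacle is the very first step. For a general Calabi--Yau $(Y,\omega)$ there is currently no construction of a global special Lagrangian torus fibration, and even for K3 surfaces rigorously controlling the singular fibers and the instanton corrections is hard; in practice one is forced either to restrict to special families (e.g.\ K3s arising from anticanonical pencils, where Lefschetz-fibration techniques à la Seidel and Sheridan apply) or to weaken ``special Lagrangian'' to a softer notion of Lagrangian fibration and absorb the resulting ambiguity into the $B$-field. A second serious difficulty, again essential for the application to $\sys$ and $\vol$, is making the family-Floer functor well defined over $\C$ rather than merely over the Novikov field, which requires convergence statements that are presently available only in limited settings. Because of these issues the conjecture remains open in general, and what can be proved is a case-by-case verification of the equivalence on classes of K3 surfaces for which an SYZ or Lefschetz-pencil description is available.
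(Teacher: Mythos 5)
This statement is not proved in the paper, and it cannot be graded against a ``paper's own proof'': it is Kontsevich's homological mirror symmetry conjecture, stated here as Conjecture~\ref{conj:HMS} with a citation to \cite{Kon}, and it is only ever \emph{assumed} (together with Conjecture~\ref{conj:BridgelandJoyce} and connectedness of the stability manifold) as a hypothesis in the corollary of Section~\ref{sec:future}. The paper itself points out that the conjecture is known only in special cases \cite{PZHMS, SeidelK3, SheridanCY}. So the honest answer is that there is nothing to prove here from the paper's point of view, and no argument you give could be ``the same as the paper's.''

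Your proposal is a reasonable survey of the SYZ/family-Floer program, but it is not a proof, and you correctly identify why: step one (a global special Lagrangian torus fibration with controlled singular fibers and instanton corrections) is not available for a general Calabi--Yau, family Floer theory currently produces functors only over the Novikov field in most settings, and full faithfulness and generation are established only case by case. Each of these is a genuine, presently unfilled gap, not a routine verification. Two further cautions: the known proofs in the cited cases (Polishchuk--Zaslow for elliptic curves, Seidel for the quartic, Sheridan for projective hypersurfaces) do \emph{not} follow the SYZ route you outline --- they use explicit theta-function computations or Lefschetz-pencil/deformation arguments --- so even the ``case-by-case'' part of your plan diverges from what is actually in the literature; and the convergence-over-$\C$ issue you flag is indeed essential if one wants to combine HMS with Theorem~\ref{thm:K3} as the paper's corollary does. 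You should present this as an open conjecture with a sketch of one proposed strategy, not as a proof.
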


The conjecture has been proved in several cases, see for instance \cite{PZHMS, SeidelK3, SheridanCY}.
In particular, for any K3 surface $Y$ and a symplectic form $\omega$ on $Y$, it is expected that there exists a K3 surface $X$ with a complex structure $\Omega$ such that the above equivalence of categories holds.

\begin{Cor}
Assume Conjecture \ref{conj:BridgelandJoyce} and \ref{conj:HMS} hold for all K3 surfaces. Also assume that the space of stability conditions on $\D^b\Coh(X)$ is connected for any projective K3 surface $X$.
Then Question \ref{Q:A-5} has an affirmative answer for any K3 surface $Y$.
\end{Cor}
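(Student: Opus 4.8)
The plan is to transport the K3 systolic inequality of Theorem \ref{thm:K3} across homological mirror symmetry. Fix a K3 surface $Y$ with symplectic form $\omega$. By Conjecture \ref{conj:HMS} there is a K3 surface $X$ with complex structure $\Omega_0$ and an equivalence $\D^b\Fuk(Y,\omega)\cong\D^b\Coh(X,\Omega_0)$; passing to idempotent (Karoubi) completions on both sides, this promotes to an equivalence $\D^\pi\Fuk(Y,\omega)\cong\D^b\Coh(X,\Omega_0)$ identifying the numerical Grothendieck groups and the spaces of stability conditions. The pair $(X,\Omega_0)$ is determined by $(Y,\omega)$, so any invariant of $(X,\Omega_0)$ is in particular a function of $(Y,\omega)$. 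Now let $\Omega$ be an arbitrary holomorphic top form on $Y$. By Conjecture \ref{conj:BridgelandJoyce}, $\Omega$ determines a stability condition $\sigma_\Omega$ on $\D^\pi\Fuk(Y,\omega)$, hence on $\D^b\Coh(X,\Omega_0)$, with central charge $\ZZ_{\sigma_\Omega}(L)=\int_L\Omega$ and with semistable objects the special Lagrangians of $(Y,\omega,\Omega)$.

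Next I would match the two sides of the desired inequality. On the systole side, the combination of Definition \ref{def:sys_lag}, Definition \ref{def:sys}, and Conjecture \ref{conj:BridgelandJoyce} (as already observed in Section \ref{sec:sysvol}) gives directly $\sys(Y,\omega,\Omega)=\sys(\sigma_\Omega)$, since the infimum of $|\int_L\Omega|$ over special Lagrangians equals the minimum of $|\ZZ_{\sigma_\Omega}(E)|$ over semistable objects. On the volume side, the mirror equivalence identifies the torsion-free part of $H_2(Y,\Z)$ with its intersection pairing with $\NN(\D^b\Coh(X,\Omega_0))$ with its Euler pairing, carrying the periods $\int_{A_i}\Omega$ to the central charges $\ZZ_{\sigma_\Omega}(E_i)$. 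Substituting into the identity $\big|\int_Y\Omega\wedge\overline\Omega\big|=\big|\sum_{i,j}\gamma^{i,j}\int_{A_i}\Omega\int_{A_j}\overline\Omega\big|$ recalled in Section \ref{sec:sysvol} and comparing with Definition \ref{vol} yields $\big|\int_Y\Omega\wedge\overline\Omega\big|=\vol(\sigma_\Omega)$.

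Finally I would invoke Theorem \ref{thm:K3}. By the connectedness hypothesis, $\Stab(\D^b\Coh(X))=\Stab^\dagger(\D^b\Coh(X))$, so Theorem \ref{thm:K3} gives $\sys(\tau)^2\leq C\cdot\vol(\tau)$ for \emph{every} $\tau\in\Stab(\D^b\Coh(X,\Omega_0))$, with
\[
C=\frac{((\rho+2)!)^2\,|\disc\ \NS(X,\Omega_0)|}{2^{\rho}}+4,
\]
where $\rho$ is the rank of $\NS(X,\Omega_0)$; this $C$ depends only on $(X,\Omega_0)$, hence only on $(Y,\omega)$. Taking $\tau=\sigma_\Omega$ and using the two identifications above gives
\[
\sys(Y,\omega,\Omega)^2=\sys(\sigma_\Omega)^2\leq C\cdot\vol(\sigma_\Omega)=C\cdot\Big|\int_Y\Omega\wedge\overline\Omega\Big|
\]
for every holomorphic top form $\Omega$ on $Y$, which is precisely the assertion of Question \ref{Q:A-5}.

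The genuinely hard input is of course contained in the two conjectures, which is why they are hypotheses. Within the argument itself, the delicate point is the \emph{compatibility} of the abstract mirror equivalence of Conjecture \ref{conj:HMS} with the period/central-charge dictionary of Conjecture \ref{conj:BridgelandJoyce}: one needs the equivalence to be chosen so that it intertwines the intersection form with the Euler form and $\int_{A_i}\Omega$ with $\ZZ_{\sigma_\Omega}(E_i)$ simultaneously, which is expected from mirror symmetry but would have to be made precise. A secondary subtlety is checking that every special Lagrangian in $(Y,\omega,\Omega)$ defines an object of $\D^\pi\Fuk(Y,\omega)$ and that $\sigma_\Omega$ lies in the distinguished component $\Stab^\dagger$ of the mirror category; the latter is automatic here because of the connectedness assumption, which forces $\Stab^\dagger=\Stab$.
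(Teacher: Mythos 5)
Your proposal is correct and follows essentially the same route as the paper: use Conjecture \ref{conj:BridgelandJoyce} to identify $\sys(Y,\omega,\Omega)$ with $\sys(\sigma_\Omega)$ and $\big|\int_Y\Omega\wedge\overline\Omega\big|$ with $\vol(\sigma_\Omega)$, transport $\sigma_\Omega$ to $\D^b\Coh(X,\Omega_0)$ via Conjecture \ref{conj:HMS}, and apply Theorem \ref{thm:K3} using the connectedness hypothesis to guarantee the transported stability condition lies in $\Stab^\dagger$. Your additional remarks on the compatibility of the mirror equivalence with the pairings and on idempotent completion make explicit some points the paper leaves implicit, but do not change the argument.
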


\begin{proof}
Conjecture \ref{conj:BridgelandJoyce} holds for $\D^b\Fuk(Y,\omega)$ implies that for any holomorhpic $2$-form $\Omega_Y$ on $Y$, there is an associated Bridgeland stability condition $\sigma_{\Omega_Y}$ on the derived Fukaya category of $(Y,\omega)$ such that
\[
\sys(Y,\omega,\Omega_Y)=\sys(\sigma_{\Omega_Y}) \text{\ \ and\ \ } \Big|\int_Y\Omega_Y\wedge\overline\Omega_Y\Big|=\vol(\sigma_{\Omega_Y}).
\]

Assuming the validity of Conjecture \ref{conj:HMS} for K3 surface $(Y,\omega)$, then there exists a K3 surface with a complex structure $(X,\Omega)$ such that
\[
\D^b\Fuk(Y,\omega)\cong\D^b\Coh(X,\Omega).
\]
Therefore $\sigma_{\Omega_Y}$ induces a Bridgeland stability condition on $\D^b\Coh(X,\Omega)$.
By Theorem \ref{thm:K3} and the assumption that $\Stab(\D^b\Coh(X))$ is connected, we have
\[
\sys(\sigma_{\Omega_Y})^2\leq C\cdot\vol(\sigma_{\Omega_Y}),
\]
where $C = \frac{((\rho+2)!)^2|\disc\ \NS(X)|}{2^\rho}+4$. Hence, the inequality 
$$
\sys(Y,\omega,\Omega_Y)^2
 \leq C\cdot  \Big|\int_Y\Omega_Y\wedge\overline\Omega_Y\Big|
$$
holds for any holomorphic $2$-form $\Omega_Y$ on $Y$.
\end{proof}

\subsection*{Systolic inequality for Calabi--Yau threefolds}
The existence of Bridgeland stability conditions on quintic Calabi--Yau threefolds is proved by Li recently \cite{LiQuintic}. It would be interesting to investigate whether the categorical systolic inequality
\[
\sys(\sigma)^2\leq C\cdot\vol(\sigma)
\]
continues to hold for Calabi--Yau threefolds. Note that the categorical volumes of geometric stability conditions on quintic Calabi--Yau threefolds near the ``large volume limit" were computed in \cite[Section 4.4]{FKY}. Hence the main difficulty lies in determining the categorical systoles. It would be nice if properties similar to Proposition \ref{Prop:key} hold for Calabi--Yau threefolds.

\subsection*{Categorical systole as topological Morse function}
It is proved by Akrout \cite{Akr} that the systole of Riemann surfaces is a \emph{topological Morse function} on the Teichm\"uller space.
In fact, Akrout shows that a ``generalized systolic function" defined locally as the minimum of a finite number of functions with positive definite Hessians is a topological Morse function,
and then apply a result of Wolpert \cite{Wol} that length functions have positive definite Hessians with respect to the Weil--Petersson metric on the Teichm\"uller space.
 
Motivated by the correspondence between flat surfaces and stability conditions,
it would be interesting to show that the categorical systole is a topological Morse function on the space of Bridgeland stability conditions,
and then deduce some topological properties of the quotient space $\Stab(\D)/\Aut(\D)$.
Since there also is a categorical analogue of Weil--Petersson metric on the space of stability conditions studied in \cite{FKY},
one might be able to follow the same approach as Akrout's proof.
We hope to come back to this question in the future.

%\subsection*{Optimal systolic ratio of K3 surfaces of Picard rank one}
%It is not clear whether the constant $n+1$ in the categorical systolic inequality
%$$
%\sys(\sigma)^2 \leq (n+1) \vol(\sigma)
%$$
%is optimal.
%As discussed in the proof of Theorem 4.1,
%finding the optimal systolic ratio is equivalent to solving the following lattice-theoretic problem:
%$$
%\sup_{\substack{\beta\in\R \\ \omega>0}}
%\min_{\substack{(s,d,r)\in\Z^3\\ sr\leq nd^2+1  \\(s,d,r)\neq(0,0,0)}}
%\frac
%{|s + 2n(\beta+i\omega)d + n(\beta+i\omega)^2 r|^2}
%{4n\omega^2}
%=?
%$$
%
%One might expect the optimal is achieved by
%certain stability conditions on K3 surfaces with ``extra symmetries",
%like the optimal of Loewner's torus systolic inequality is
%achieved by the flat equilateral torus.

%
%\subsection*{Miscellaneous}
%
%It is mentioned in Remark \ref{Rmk:Mis} that in the case of elliptic curves and K3 surfaces,
%the set of Mukai vectors needed for computing the categorical systole is independent of stability conditions.
%This is also true for the derived categories of coherent sheaves on smooth projective curves of genus $\geq1$,
%since there is no wall-crossings on the space of stability conditions \cite{MacriCurve}.
%It would be interesting to find other triangulated categories that also satisfy this property.

%\item Akrout \cite{Akr} proves that the systole of Riemann surfaces is a \emph{topological Morse function} on the Teichm\"uller space.
%It would be interesting to know whether the categorical systole is also a topological Morse function on the space of Bridgeland stability conditions.
%\end{itemize}

\bibliographystyle{alpha}
%\bibliography{biblio} 

\noindent Y.-W.~Fan \\
\textsc{Yau Mathematical Sciences Center, Tsinghua University}\\
\texttt{yuweifanx@gmail.com}

\end{document}